\numberwithin{equation}{section}
\newcommand{\margnote}[1]{
\ifthenelse{\boolean{shownotes}}%
{\marginpar{\raggedright\tiny\texttt{#1}}}%
{}%
}
\newcommand{\hole}[1]{
\ifthenelse{\boolean{shownotes}}%
{\begin{center} \fbox{ \rule {.25cm}{0cm}
\rule[-.1cm]{0cm}{.4cm} \parbox{.85\textwidth}{\begin{center}
\texttt{#1}\end{center}} \rule {.25cm}{0cm}}\end{center}}
{}
}
\newcommand{\R}{{\mathbb R}}
\newcommand{\M}{{\mathbb M}}
\newcommand{\Z}{{\mathbb Z}}
\newcommand{\C}{{\mathbb C}}
\newcommand{\N}{{\mathbb N}}
\newcommand{\Id}{{\mathbb I}_d}
\newcommand{\In}{{\mathbb I}_n}
\newcommand{\Idd}{{\mathbb I}_{d^2}}
\newcommand{\Itr}{{\mathbb I}_3}
\newcommand{\Ido}{{\mathbb I}_2}
\newcommand{\cT}{{\mathcal{T}}}
\newcommand{\cU}{{\mathcal{U}}}
\newcommand{\cA}{{\mathcal{A}}}
\newcommand{\cR}{{\mathcal{R}}}
\newcommand{\cS}{{\mathcal{S}}}
\newcommand{\cG}{{\mathcal{G}}}
\newcommand{\cK}{{\mathcal{K}}}
\newcommand{\AT}{Q}
\newcommand{\tAT}{\mathcal{Q}}
\newcommand{\xim}{\widetilde{\xi}}
 \newcommand{\llb}{\llbracket}
\newcommand{\rrb}{\rrbracket}
\newcommand{\eu}{\hat{e}_1}
\newcommand{\ei}{\hat{e}_i}
\newcommand{\ej}{\hat{e}_j}
\newcommand{\eq}{\hat{e}_q}
\newcommand{\ep}{\hat{e}_p}
\newcommand{\hnu}{\hat{\nu}}
\newcommand{\Cof}{\mathrm{Cof}\,}
\newcommand{\sgn}{\mathrm{sgn}\,}
\newcommand{\ii}{\mathrm{i}}
\renewcommand{\Re}{\mbox{\rm Re}\,}
\renewcommand{\Im}{\mbox{\rm Im}\,}
\newcommand{\Tr}{\mbox{\rm tr}\,}
\newcommand{\Div}{\mbox{\rm div}}
\theoremstyle{plain}
\newtheorem{lemma}{Lemma}[section]
\newtheorem{theo}[lemma]{Theorem}
\newtheorem*{teorema}{Theorem}
\newtheorem{proposition}[lemma]{Proposition}
\newtheorem{corollary}[lemma]{Corollary}
\theoremstyle{definition}
\newtheorem{defin}[lemma]{Definition}
\newtheorem{remark}[lemma]{Remark}
\theoremstyle{remark}
\newcommand{\txi}{\tilde{\xi}}
\begin{document}

%\title[Stability of shock fronts for Hadamard hyperelastic materials]{Stability of classical shock fronts for compressible Hadamard hyperelastic materials}

\title[Stability of shock fronts for Hadamard hyperelastic materials]{Stability of classical shock fronts for compressible hyperelastic materials of Hadamard type}

\author[R. G. Plaza]{Ram\'on G. Plaza}

\address{{\rm (R. G. Plaza)} Instituto de Investigaciones en Matem\'aticas Aplicadas y en Sistemas\\Universidad Nacional 
Aut\'onoma de M\'exico\\Circuito Escolar s/n, Ciudad Universitaria, Ciudad de M\'{e}xico C.P. 04510 (Mexico)}

\email{plaza@mym.iimas.unam.mx}

\author[F. Vallejo]{Fabio Vallejo}
 
\address{{\rm (F. Vallejo)}  Instituto de Investigaciones en Matem\'aticas Aplicadas y en Sistemas\\Universidad Nacional 
Aut\'onoma de M\'exico\\Circuito Escolar s/n, Ciudad Universitaria, Ciudad de M\'{e}xico C.P. 04510 (Mexico)}

\email{fvallejo@unam.mx}

\subjclass[2010]{35Q74, 35L65, 35L67, 74J40}

\keywords{Classical hyperelastic shocks, compressible Hadamard materials, multidimensional stability, Lopatinski\u{\i} determinant.}

\begin{abstract}
This paper studies the uniform and weak Lopatinski\u{\i} conditions associated to classical
(Lax) shock fronts of arbitrary amplitude for compressible hyperelastic materials of
Hadamard type in several space dimensions. Thanks to the seminal works of
Majda \cite{M1,M2} and M\'{e}tivier \cite{Me2,Me1,Me5}, the uniform Lopatinski\u{\i} condition ensures the local-in-time, multidimensional, nonlinear stability of such
fronts. The stability function (also called \emph{Lopatinski\u{\i} determinant}) for shocks of arbitrary amplitude in this large class of hyperelastic materials is computed explicitly. This information is used to establish the conditions for uniform and weak shock stability in terms of the parameters of the shock and of the elastic moduli of the material.
\end{abstract}

\maketitle

%%%%%%%%%%%%%%%%%%%%%%%%%%%%%%%%%%%%%%%%%%%%%%%%%%%%%%%%%%%%
%%%  Specify depth of table of contents
%%%  0 -- include only chapters
%%%  1 -- include chapters and sections
%%%  2 -- include chapter, sections, and subsections
%%%%%%%%%%%%%%%%%%%%%%%%%%%%%%%%%%%%%%%%%%%%%%%%%%%%%%%%%%%%

\setcounter{tocdepth}{1}
%%%%%
\tableofcontents

%%%%%%%%%%%%%%%%%%%%%%%%%%%%%%%%%%\nocite{*}

\section{Introduction}

In this paper, we consider planar shock fronts occurring in an ideal, non-thermal, compressible hyperelastic medium in several space dimensions. Shock waves are important in many applications such as gas dynamics, acoustics, material sciences, geophysics and even in medicine and health sciences. They appear as idealized, abrupt disturbances (discontinuous, in the absence of dissipation effects) which carry energy and propagate faster than the characteristic speed of the medium in front of them. In the mathematical theory of hyperbolic systems, shock waves are represented by weak solutions to nonlinear systems of conservation laws which satisfy classical jump conditions of Rankine-Hugoniot type plus admissibility/entropy conditions of physical origin (see, e.g., \cite{Da4e,Ser1,BS} and the references therein). A fundamental property from both the mathematical and physical perspectives is their \emph{stability} under small perturbations. The shock stability theory has its origins in the physics literature and, more concretely, in the context of gas dynamics, where shock waves for the (inviscid) Euler equations constitute the main paradigm. The inviscid shock stability analysis for gas dynamics (at least from a formal viewpoint) dates back to the mid-1940s (cf. \cite{Roberts45,Beth}) and was thereafter pursued by many physicists and engineers in the following decades (for an abridged list of references, see \cite{Dyak54,Erp62,CSGard63}). The nonlinear theory of stability and existence of shock fronts for general systems of conservation laws started with the seminal work of Majda \cite{M1,M2} (see also the nonlinear analysis of Blokhin \cite{Blok82} for the equations of gas dynamics) and was later extended and revisited by M\'etivier \cite{Me1,Me2,Me5}. As a result from their pioneering work, it is now known that the nonlinear stability of shock fronts depends upon the \emph{Lopatinski\u{\i} conditions} for linear hyperbolic initial boundary value problems \cite{Kre70,Lopa70}.  

Given a small multidimensional smooth perturbation impinging on the shock interface, one may ask whether it leads to a local solution with the same shock structure (smooth regions separated by a modified, curved shock discontinuity), or if this structure is destroyed. By a suitable change of coordinates (a shock localization procedure permitted by the finite speed of propagation and originally conceived by Erpenbeck \cite{Erp62}), the transmission problem can be reduced to an initial boundary value problem in a half space. The resulting mixed problem is non-standard, in the sense that the conditions at the boundary are of differential type in the shock location, expressing the Rankine-Hugoniot jump conditions across the shock. Still, the linearized problem can be treated by a normal modes analysis resulting into the uniform and weak Lopatinski\u{\i} conditions for $L^2$ well-posedness of the linearized problem. Majda \cite{M1} named the latter the \emph{uniform and weak Lopatinski\u{\i} conditions for shock stability}. Under the uniform (linearized) stability condition, Majda \cite{M2} proved the local-in-time existence and uniqueness of uniformly stable shock waves for general nonlinear systems (satisfying some block structure condition) by means of a fixed-point argument and a suitable iteration scheme. The weak Lopatinski\u{\i} condition, in contrast, is equivalent to the absence of Hadamard-type \cite{Hada1902} ill-posedness for the problem and in this case the shock is called weakly stable. Hence, the nonlinear stability problem reduces to verifying the linear stability conditions, which can be recast in terms of an analytic function in the frequency space known as the \emph{Lopatinski\u{\i} determinant (or stability function)}. The uniform Lopatinski\u{\i} condition plays an important role in the stability of viscous shock profiles as well (in which the Lopatinski\u{\i} determinant arises as a limit of associated Evans functions for the viscous linearized problem), as shown by Zumbrun and Serre \cite{ZS1} (see also \cite{Z3,Z6} and the references therein). The original works by Majda \cite{M1,M2} pertain to classical (or Lax) shocks. It is to be noted, however, that the analysis and methods have been extended to other situations and the theory now encompasses non-classical (undercompressive and over-compressive) shocks, vortex sheets, phase boundaries and detonation fronts (cf. \cite{Fr3,B1,B2,BF,CJLW07,FrP1,CoS04,CoS08}). For completeness, Appendix \ref{sechypall} contains a summary of the stability conditions for multidimensional (Lax) shock fronts as well as the definition of the Lopatinski\u{\i} determinant. A detailed account of the methodology and their numerous implications can be found in the monograph by Benzoni-Gavage and Serre \cite{BS}.

In the case of the equations of hyperelasticity, the literature on (multidimensional) shock stability is scarce. Corli \cite{Co1} proved that the elastodynamics equations for hyperelastic materials satisfy the block structure of Majda \cite{M1} and examined the stability of small-amplitude shocks for St. Venant-Kirchhoff materials, verifying for this particular model the general result of M\'etivier \cite{Me1}, which assures that all sufficiently weak extreme shocks are stable. Other studies on small-amplitude, weakly anisotropic elastic shocks can be found in \cite{KuCh00}. In a later contribution, Freist\"uhler and the first author \cite{FrP1} studied the Lopatinski\u{\i} condition and stability of hyperelastic \emph{phase boundaries}, which can be identified as non-classical shocks of undercompressive type (cf. Freist\"uhler \cite{Fr3}). The fundamental difference with Lax shocks is that, apart from the Rankine-Hugoniot jump conditions, for phase boundaries one has to determine an extra jump condition in the form of a kinetic relation (or kinetic rule; see also \cite{B1,B2}). The stability conditions found in \cite{FrP1} have been numerically verified for martensite twins in two \cite{FrP3} and three space dimensions \cite{Pl2}, under perturbations of the kinetic equal area rule. There is a recent result on the stability of quasi-transverse elastic shocks subjected to dissipation (viscosity) effects \cite{CIS19}, which makes use of Evans functions techniques. Up to our knowledge, there are no other results (either numerical or analytical) on stability of hyperelastic shocks in the literature. In this work, we study for the first time the stability conditions for classical shocks fronts of arbitrary amplitude within hyperelastic media belonging to the large class of \emph{compressible Hadamard materials}.

Compressible hyperelastic materials of Hadamard type are characterized by stored energy density functions of the form
\begin{equation}
\label{Hadamardmat}
W(U) = \frac{\mu}{2} \Tr (U^\top U) + h(\det U), 
\end{equation}
where $U \in \R^{d \times d}$, with $\det U >0$, denotes the deformation gradient of the elastic material in dimension $d \geq 2$, $\Tr(\cdot)$ is the trace function and $\mu > 0$ is a positive constant known as the \emph{shear modulus}. The volumetric density function $h = h(J)$, for $J = \det U \in (0,\infty)$, accounts for energy changes due to changes in volume. In this paper, we assume the following about the function $h$:
\begin{equation}
\label{H1}
\tag{H$_1$}
h \in C^3 ((0,\infty); \, \R),
\end{equation}
\begin{equation}
\label{H2}
\tag{H$_2$}
h''(J) > 0, \quad \text{for all } \, J > 0,
\end{equation}
\begin{equation}
\label{H3}
\tag{H$_3$}
h'''(J) < 0, \quad \text{for all } \, J > 0.
\end{equation}
Hypothesis \eqref{H1} is a minimal regularity requirement. The convexity of the volumetric energy density function \eqref{H2} is a sufficient condition for the material to be strongly elliptic. In the materials science literature, those energies that satisfy conditions \eqref{H1} and \eqref{H2} are known as compressible Hadamard materials. Hypothesis \eqref{H3} can be interpreted as a further \emph{material convexity} property (see Remark \ref{remmatconvex} below), which is needed for the shock stability analysis.

The term Hadamard material was coined by John \cite{Joh66} (based on an early description by Hadamard \cite{Hada1903}) to account for a large class of elastic media where purely longitudinal waves may propagate in every direction, in contrast with other elastic, compressible, isotropic materials which, subjected to large homogeneous static deformations, underlie purely longitudinal waves only in the directions of the principal axes of strain (cf. Truesdell \cite{True61}). Knowles \cite{Kno77} proved, for instance, that this class of materials admits non-trivial states of finite anti-plane shear. The most natural interpretation of a compressible elastic material of Hadamard type is, however, as a compressible extension of a neo-Hookean incompressible solid as described by Pence and Gou \cite{PnGo15}. For convenience of the reader, we have included in Appendix \ref{haddimate} a comprehensive and self-contained introduction to compressible Hadamard materials from the viewpoint of the theory of infinitesimal strain, in which we extend to arbitrary space dimensions the nearly incompressible versions of the neo-Hookean models which are compatible with the small-strain regime. It is to be observed, though, that the class of Hadamard materials considered in this paper also includes materials which may undergo large volume changes. Section \S \ref{subsecexamples} contains a list of energy densities which can be found in the materials science literature and belong to the compressible Hadamard class.

The goal of this paper is, therefore, to determine the stability conditions for shock fronts in compressible non-thermal Hadamard materials in terms of the shock parameters and the elastic moduli of the medium, just as in the case for isentropic gas dynamics \cite{Erp62,M3,BS}. In $d \geq 2$ space dimensions, the $n = d^2 +d$ dynamical equations of hyperelasticity outnumber the Euler equations for isentropic fluid flow ($n = d +1$) and the calculations are thereby much more involved. Nevertheless, in this work we \emph{explicitly} compute the Lopatinski\u{\i} determinant associated to such configurations.  We start (see section \S \ref{secstab}) by describing the dynamical equations of hyperelasticity and, notably, the Legendre-Hadamard condition on the stored energy density, which guarantees the hyperbolicity of the system of conservation laws. We verify this condition in any space dimension, $d \geq 2$, and prove that the constant multiplicity condition of M\'etivier \cite{Me5} is also fulfilled. Section \S \ref{secshock} is devoted to describe classical shocks ocurring in this class of materials. We introduce a scalar parameter $\alpha \in \R$, $\alpha \neq 0$, which completely determines the shock and its amplitude once a base elastic state is selected. We call it the \emph{intensity} of the shock. It is shown that only \emph{extreme} classical shocks are possible and that they satisfy the strict Lax entropy conditions. In section \S \ref{secnmodes} we perform the normal modes analysis prior to the establishment of the stability results. In particular, we compute all the necessary ingredients to assemble the Lopatinski\u{\i} determinant associated to the shock front. A few remarks are in order. One of the main contributions of \cite{FrP1} is the observation that the normal modes analysis for hyperelastic shocks can be performed on a lower dimensional frequency manifold which encodes all the stability information and rules out some spurious frequencies of oscillation. This property is particularly important in the case of phase boundaries which are associated to subsonic speeds and to non-extreme characteristic fields, reducing the otherwise cumbersome (but still hard) calculations on the full system of dimension, $d^2 +d$. In the present case of Lax shocks, in which the characteristic fields are necessarily extreme, it is well-known that the Lopatinski\u{\i} determinant reduces to a scalar product of two complex vector fields (cf. \cite{Ser2,BS,FrP1}). Therefore, there is no considerable profit from such reduction and we opted to compute the full Lopatinski\u{\i} function in the frequency space. Still, the normal modes analysis is convoluted and section \S \ref{secnmodes} is devoted to it\footnote{as Benzoni-Gavage and Serre point out, ``...all methods require some care and are a little lengthy"; \cite{BS}, p. 431.}. We have tried to optimize the exposition and to keep the details to a minimum, but without sacrificing the direct verification and validation of the results. The reader will be a better judge if we succeeded or not. The remarkable feature is that it is possible to explicitly perform all the calculations for this large class of elastic materials, even for shocks of arbitrary amplitude (there are no restrictions on $|\alpha|$, except for those imposed by natural orientation preserving considerations; see Proposition \ref{proppadonde} below). These calculations result into the complete characterization and analysis of the Lopatinski\u{\i} determinant for classical Hadamard shocks, which yields, in turn, our main stability conclusions. This is the content of the central section \S \ref{seclopdet}. For instance, it is proved that (just as in the case of isentropic gas dynamics \cite{BS,M3}) \emph{all Lax shocks are, at least, weakly stable}. This implies that the shocks are never violently unstable with respect to multidimensional perturbations. Moreover, we introduce a material (scalar) stability parameter, $\rho = \rho(\alpha)$ (see Definition \ref{defirho} below), which determines the transition from weak to uniform shock stability. In gloss terms, our main result (see Theorem \ref{stabcriteria} below for its precise statement) can be described as follows:
\begin{teorema}
For a compressible hyperelastic Hadamard material satisfying assumptions \eqref{H1}, \eqref{H2} and  \eqref{H3}, consider a classical (Lax) shock with intensity $\alpha \neq 0$. If $\rho(\alpha)\leq0$ then the shock is uniformly stable. In the case where $\rho(\alpha)>0$, the shock is uniformly stable if and only if a material condition holds. Otherwise the shock is weakly stable.
\end{teorema}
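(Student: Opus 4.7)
The plan is to exploit the explicit expression for the Lopatinski\u{\i} determinant $\Delta(\tau,\eta)$ derived in Section \ref{seclopdet} and to locate its zeros on the closed hemisphere $\{(\tau,\eta)\in\Cb\times\R^{d-1}\colon|\tau|^2+|\eta|^2=1,\;\Re\tau\ge 0\}$. As recalled in Appendix \ref{sechypall}, uniform stability amounts to $\Delta$ being nowhere zero on this hemisphere, while weak stability allows zeros only on its boundary $\Re\tau=0$. Using the homogeneity of $\Delta$ in $(\tau,\eta)$ together with the $\mathrm{SO}(d-1)$-invariance in the tangential variable, I would reduce the analysis to a scalar complex function of two real parameters. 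Since the characteristic field is extreme, the Lopatinski\u{\i} determinant factors as a complex inner product (cf.\ \cite{Ser2,BS,FrP1}), and the normal modes computations of Section \ref{secnmodes} put it in the schematic form $\Delta=P_1(\tau,\eta)+P_2(\tau,\eta)\,\omega(\tau,\eta)$, where $\omega$ is a specific branch of $\sqrt{R(\tau,\eta)}$ arising from the unstable subspace and $P_1,P_2,R$ are polynomial in $(\tau,\eta)$ with coefficients depending algebraically on $\alpha$, $\mu$, and derivatives of $h$ at the pre- and post-shock Jacobians.

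To establish that every Lax shock of this class is at least weakly stable, I would rule out interior zeros, i.e.\ zeros with $\Re\tau>0$. Squaring the identity $P_1=-P_2\omega$ yields $P_1^2=P_2^2 R$, and I expect the strict Lax inequalities proved in Section \ref{secshock} together with hypotheses \eqref{H2} and \eqref{H3} to force a sign contradiction on this polynomial identity in the open right half plane, mirroring the mechanism used for isentropic gas dynamics in \cite{M3,BS}.

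For the distinction between uniform and weak stability, I would then restrict to $\tau=\ii\sigma$, where $R$ is real, $\omega$ is real (radiating modes) or purely imaginary (evanescent modes) according to the sign of $R$, and the locus $R=0$ is the glancing set. The scalar $\rho(\alpha)$ of Definition \ref{defirho} is, I expect, precisely the discriminant of the real algebraic equation $P_1^2=P_2^2 R$ restricted to the radiating region after homogenizing in $|\eta|/|\sigma|$. When $\rho(\alpha)\le 0$ this equation has no admissible root and $\Delta$ cannot vanish on the imaginary axis, so uniform stability holds unconditionally. When $\rho(\alpha)>0$, an admissible frequency $(\ii\sigma_\star,\eta_\star)$ solving $P_1^2=P_2^2 R$ exists, and the genuine vanishing of $\Delta$ at that frequency reduces, after a sign check of the ratio $P_1/P_2$ against the correct branch of $\omega$, to a single scalar condition on the elastic moduli: this is the material condition in the theorem. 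If that condition holds, $\Delta\ne 0$ at the candidate frequency as well and uniform stability persists; otherwise $\Delta$ vanishes there, but only on the boundary, so the shock is merely weakly stable.

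The principal obstacle is this last step: tracking the square-root branches consistently so that the algebraic condition $\Delta=0$ on the imaginary axis collapses into a clean inequality depending only on $\alpha$, $\mu$, and the values of $h'$, $h''$, $h'''$ at the jump Jacobians. Keeping the intermediate expressions tractable for arbitrary dimension $d\ge 2$ and arbitrary amplitude $|\alpha|$ — subject only to the orientation-preserving constraints of Proposition \ref{proppadonde} — will require systematic use of the cofactor identities for Hadamard energies \eqref{Hadamardmat} and of the characteristic-speed formulas derived in Section \ref{secshock}, together with a careful identification of the right branch of $\omega$ on each sign stratum of $R$.
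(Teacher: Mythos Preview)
Your overall plan---reduce the Lopatinski\u{\i} determinant to a tractable scalar expression and separate the analysis into interior zeros versus boundary zeros---is sound, but several specific expectations do not match what actually happens in this problem, and one of them is a genuine error.

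The claim of $\mathrm{SO}(d-1)$-invariance in the tangential frequency is false for a general base state $U^+$. The Lopatinski\u{\i} determinant depends on $\xim$ through the quantities $\eta^+(\xim)=\sum_{j\neq 1}(V_1^+)^\top V_j^+\xi_j$, $N^+(\xim)^2$, $\zeta^+(\xim)$ and $P^+(\xim)$, all built from the cofactor matrix $V^+=\Cof U^+$, and none of these is rotationally invariant unless $U^+$ happens to be isotropic. Consequently you cannot reduce to ``two real parameters''; the transition condition \eqref{critdef} is genuinely a condition to be checked for \emph{all} $\xim\neq 0$, not for a single representative direction.

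Your picture of $\rho(\alpha)$ as the discriminant of the squared equation $P_1^2=P_2^2R$ is also off. In the paper, $\rho(\alpha)$ enters much more directly: after completing the square in the stable root $\beta$, the determinant takes the form
\[
(\kappa_2^+-s^2)\theta_{11}^+\Big(\beta-\ii\,\tfrac{\eta^+(\xim)}{\theta_{11}^+}\Big)^2+\rho(\alpha)\,P^+(\xim),
\]
with $P^+(\xim)\ge 0$. When $\rho(\alpha)\ge 0$ this factors as a product of two linear expressions in $\beta$, each with $\Re\beta<0$, and weak stability in $\Gamma^+$ is immediate---no squaring, no branch bookkeeping. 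For $\rho(\alpha)<0$ the factorization still isolates a single nontrivial factor $\Delta_1$, and the paper then rules out interior zeros by the \emph{argument principle}: the image of a large semicircular contour under $\Delta_1$ is computed explicitly (the imaginary-axis portion maps to part of an ellipse), and one checks that the origin lies inside that ellipse, so the winding number is zero. Your proposed squaring route could in principle be made to work, but it introduces spurious roots from the wrong branch and, without the rotational reduction you assumed, would leave you with a genuinely $(d-1)$-parameter family of polynomial identities to analyze. The paper's factorization plus winding-number argument sidesteps all of that.

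For the boundary analysis the paper again avoids squaring: it introduces a frequency change $(\lambda,\xim)\mapsto(\gamma,\xim)$ that straightens the square root, and then studies $Y(t,\xim)=\widetilde\Delta(\ii t,\xim)$ directly as a real function of $t$, using monotonicity for $|t|\ge\sqrt{\zeta^+(\xim)}$ to reduce the existence of a boundary zero to the single inequality \eqref{crit1}. That inequality is the ``material condition''; it is not a discriminant but the sign of $Y$ at the glancing point $t=\sqrt{\zeta^+(\xim)}$.
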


Of course, in the case of uniformly stable shocks the stability conclusions are also nonlinear, inasmuch as the Majda-M\'{e}tivier theory applies. In section \ref{secappli} we verify the stability conditions for the Ciarlet-Geymonat \cite{CiGe82} and Blatz \cite{Bla71} models, both belonging to the class of  Hadamard materials, and exemplifying elastic media for which there is either uniform stability for all shocks, or a transition to weak stability. Finally, some conclusions and general remarks can be found in section \S \ref{secdisc}. 

\subsection*{On notation}

The unit imaginary number is denoted by $\mathrm{i} \in \C$, $\ii^2 = -1$, and $i,j \in \Z$ indicate integer indices.
$\{ \hat{e}_j \}_{j=1}^d$ denotes the canonical basis of $\R^d$ and $\Id$ denotes the identity $d \times d$ matrix, for each $d \in \N$, $d \geq 2$. $\M_+^d$ denotes the set of all real $d \times d$ matrices with positive determinant. In this paper, the elements of a real matrix $A$ will be denoted as $A_{ij}$ and $A_j$ will denote the $j$-th column vector. We slightly modify the customary notation and the principal invariants of a real $d \times d$ matrix $A$, understood as the coefficients appearing in its characteristic polynomial, will be denoted as $I^{(1)}(A) = \Tr A$, $I^{(d-1)}(A) = \Tr (\Cof A)$ or $I^{(d)}(A) = \det A$. We denote the cofactor matrix of any real square matrix $A \in \R^{d\times d}$ to be $(\Cof A)_{ij} = (-1)^{i+j} \det (A'_{(i,j)})$, $1 \leq i,j \leq d$, where $A'_{(i,j)}$ is the $(d-1) \times (d-1)$ matrix obtained by deleting the $i$-th row and the $j$-th column of $A$. Hence,
\begin{equation}
\label{exprcof}
(\Cof A)^\top A = A (\Cof A)^\top = (\det A) \Id.
\end{equation}For any $a, b \in \R^d$, $a \otimes b \in \R^{d \times d}$ denotes the standard tensor product matrix whose $(i,j)$-entry is $a_i b_j$. For a complex number $\lambda$, we denote complex conjugation by ${\lambda}^*$ and its real
and imaginary parts by $\Re \lambda$ and $\Im \lambda$, respectively. Real matrices are denoted by capital roman font letters (e.g. $A \in \R^{d \times d}$), except for the first Piola-Kirchhoff stress tensor, denoted by $\sigma$. Complex matrix fields in the space of frequencies will be denoted with calligraphic letters (e.g., $\mathcal{A} \in \C^{n \times n}$).
Complex transposition of block matrices are indicated by the symbol ${}^*$ (e.g., $\mathcal{A}^*$),
whereas simple transposition is denoted by the symbol $\mathcal{A}^\top$. For any (scalar or matrix valued) function $g$ of the state variables $u$, the jump across a shock discontinuity will be denoted as $\llb g(u) \rrb := g(u^+) - g(u^-)$. Throughout this paper we use the non-standard symbol $U \in \R^{d \times d}$ to denote the deformation gradient of an elastic material (usually denoted with the symbol $F$ in the literature). For convenience of the reader we have kept the notation used in \cite{FrP1} for the shock stability analysis, because we constantly refer to the formulae and the results in that paper. The same notation was also used by John \cite{Joh3} in his classical paper on elasticity theory. 

\section{Elastodynamics and hyperbolicity}
\label{secstab}

\subsection{The equations of elastodynamics}

The elastic body under consideration is identified at rest by its 
reference configuration, which is an open, connected set $\Omega \subseteq \R^d$, $d \geq 1$. Here $d \in \N$ denotes the dimension of the physical space and, typically, $d = 1,2$ or $3$. Since we are interested in the multidimensional stability of shock fronts we assume that $d \geq 2$ for the rest of the paper. The motion of the elastic body is described by the Lagrangian mapping coordinate, $(x,t) \mapsto y(x,t)$, $y : \Omega \times [0,\infty) \to \R^d$, that is, $y = y(x,t)$ denotes the position at time $t > 0$ of the material particle that was located at $x \in \Omega$ when $t = 0$. It determines the deformed position of the material point $x \in \Omega$. It is assumed that the Lagrangian mapping is smooth enough, say, at least of class $C^2(\Omega \times (0,\infty); \R^d)$ and one-to-one with a locally Lipschitz inverse. The \emph{local velocity} at the material point is defined as $v(x,t) := y_t (x,t)$, $v : \Omega \times [0,\infty) \to \R^d$, or component-wise, as 
\[
v_i(x,t) = \frac{\partial y_i(x,t)}{\partial t}, \qquad i = 1, \ldots, d.
\]
The \emph{local deformation gradient}, $U(x,t) := \nabla_x y (x,t)$, $U :  \Omega \times [0,\infty) \to \R^{d \times d}$, is a real $d \times d$ matrix whose $(i,j)$-component is given by
\[
U_{ij} (x,t) = \frac{\partial y_i}{\partial x_j} (x,t), \quad 1 \leq i,j \leq d.
\]
Following the notation in \cite{FrP1}, $U_j \in \R^d$ will denote the $j$-th column of $U$, that is,
\[
U_j = \begin{pmatrix} U_{1j} \\ \vdots \\ U_{dj} \end{pmatrix} \in \R^d, \qquad j = 1, \ldots, d.
\]
By physical considerations (namely, that the material does not change orientation and that it is locally invertible \cite{Ci}) one usually requires that 
\begin{equation}
\label{defJ}
J = J(U) := \det U > 0. 
\end{equation}
Thus, it is assumed that $U(x,t) \in \M_+^d$ for all $(x,t) \in \Omega \times (0,\infty)$.

Supposing that no thermal effects are taken into consideration and in the absence of external forces, the principles of continuum mechanics (cf. \cite{Ci,Da4e,Sil97,TrNo3ed}) yield the basic equations of elastodynamics,
\begin{equation}
\label{elasto}
 y_{tt} - \Div_{x} \! \, \sigma = 0,
\end{equation}
for $(x,t) \in \Omega \times [0,\infty)$ where $\sigma$ is the (first)
Piola-Kirchhoff stress tensor and whose $(i,j)$-component is denoted as $\sigma_{ij}$, $1 \leq i,j \leq d$. System \eqref{elasto} is a short-cut for the system of $d$ equations,
\begin{equation}
\label{elastoij}
\frac{\partial^2 y_{i}}{\partial t^2} - \sum_{j=1}^d \frac{\partial \sigma_{ij}}{\partial x_j} = 0, \qquad i = 1, \ldots, d,
\end{equation}
expressing conservation of momentum. 

An elastic material is called \emph{hyperelastic} if there exists a single stored energy density function $W : \M_+^d\to \R$, defined per unit volume in the reference configuration, from which all stress fields can be derived. In particular, the first Piola-Kirchhoff stress tensor (cf. \cite{Ci,TrNo3ed}), $\sigma = \sigma(U)$, derives from $W$ as
\[
\sigma(U) = {\partial W \over \partial U}, \qquad U \in \M_+^d,
\]
or component-wise as
\[
\sigma_{ij}(U) = \frac{\partial W}{\partial U_{ij}}, \qquad 1 \leq i,j \leq d.
\]
We adopt the notation in \cite{FrP1}, under which $\sigma_j = \sigma(U)_j \in \R^d$ denotes the $j$-th column of $\sigma(U)$; more precisely,
\[
\sigma(U)_j = \begin{pmatrix} W_{U_{1j}} \\ \vdots \\ W_{U_{dj}}\end{pmatrix}, \qquad j = 1, \ldots, d.
\]

Basic restrictions on the function $W$ include, for instance, the \emph{principle of frame indifference} (cf. \cite{Ci,Da4e,Og84}),
\[
W(U) = W(OU), \quad \text{for all } \; O \in \mathrm{SO}_d(\R), \,\; U \in \M_+^d,
\]
where $\mathrm{SO}_d(\R)$ denotes the set of all orthogonal real $d \times d$ matrices (rotations); \emph{normalization}, requiring $W(U) \geq 0$ for all $U \in \M_+^d$ (cf. \cite{Ci,Og84}); and \emph{material symmetry or isotropy} (see \cite{Og84,TrNo3ed}),
\[
W(U) = W(UO), \quad \text{for all } \; O \in \mathrm{SO}_d(\R), \; U \in \M_+^d.
\]

It is assumed that $W$ is \emph{objective}, so that it depends on the deformation gradient $U$ only through the right Cauchy-Green tensor, $C = U^\top U$ (see, for example, Ogden \cite{Og84}), which is symmetric positive definite by definition and measures the length of an elementary vector after deformation in terms of its definition in the reference configuration. Furthermore, it is well-known that the energy density function, $W = W(U) = \widetilde{W}(C)$, of any frame-indifferent, isotropic material, is a function of the principal invariants of the symmetric Cauchy-Green tensor $C$, $W = \overline{W}(I^{(1)}, \ldots, I^{(d)})$. This is called the \emph{Rivlin-Ericksen representation theorem} \cite{RivEr55} (see Ciarlet \cite{Ci}, section 3.6 for the statement and proof in dimension $d=3$, and Truesdell and Noll \cite{TrNo3ed}, section B-10, p. 28, in arbitrary dimensions.)

\subsection{Legendre-Hadamard condition and hyperbolicity}

The hyperbolicity of system \eqref{elasto} is a necessary condition for the well-posedness of the Cauchy problem and the corresponding numerical methods of Godunov type \cite{Da4e,GNH16}. The criterion for hyperbolicity of system \eqref{elasto} is related to another condition on the stored energy density function, known as the \emph{Legendre-Hadamard condition} \cite{Da4e,Sil97}. In order to state the latter we follow the notation of \cite{FrP1} and express the second derivatives of the energy density in terms of the following $d \times d$ matrices defined by
\begin{equation}
\label{defBij}
B_i^j(U) := \frac{\partial \sigma_j}{\partial U_i} = \begin{pmatrix} W_{U_{1j}U_{1i}} & \cdots & W_{U_{1j}U_{di}} \\ \vdots & & \vdots \\ W_{U_{dj} U_{1i}} & \cdots & W_{U_{dj} U_{di}}\end{pmatrix} \in \R^{d \times d},
\end{equation}
for each pair $1 \leq i,j \leq d$. That is, the $(l,k)$-component of the matrix $B_i^j$ is $W_{U_{lj} U_{ki}} = \partial^2 W / \partial U_{lj} \partial U_{ki}$, for each fixed $1 \leq i,j \leq d$.  Notice that the matrices $B_i^i(U)$ are symmetric, $B^i_i(U)^\top = B^i_i(U)$ for all $U$ and all $i$, and that $B^j_i(U) = B^i_j(U)^\top$ for all $U$ and all $i,j$ by definition. Whence, the $d \times d$ \emph{acoustic tensor} can be defined as
\begin{equation}
\label{defacouten}
\AT(\xi, U) := \sum_{i,j=1}^d \xi_i \xi_j B_i^j(U) \in \R^{d \times d},
\end{equation}
for all $\xi \in \R^d$, $U \in \M_+^d$. Note that the acoustic tensor is symmetric.

\begin{defin}[Legendre-Hadamard condition]
The energy density function $W = W(U)$ satisfies the \emph{Legendre-Hadamard} condition at $U \in \M_+^d$ if
\begin{equation}
\label{LHcond}
\eta^\top \AT(\xi,U) \eta > 0, \quad \text{for all } \; \xi, \eta \in \R^d \backslash \{0\}.
\end{equation}
In other words, the acoustic tensor is positive definite for all frequencies $\xi \neq 0$, $\eta \neq 0$.
\end{defin}
\begin{remark}
The Legendre-Hadamard condition is tantamount to the convexity of $W$ along any direction $\xi \otimes \eta$ with rank one. It is also said that $W$ is a \emph{rank-one convex function} of the deformation gradient $U$. For an hyperelastic medium, this condition is equivalent to the strong ellipticity of the operator $y \mapsto \mathrm{div}_x (\sigma(\nabla_x y))$ (cf. Dafermos \cite{Da86a}) and, consequently, in the context of elastostatics the rank-one convexity condition is also called \emph{strong ellipticity} (see, e.g., \cite{Ba4,Dac01,Sfy11}). Even though it is well-known that rank-one convexity of the energy function is equivalent to the hyperbolicity of the equations of elastodynamics for an hyperelastic material (see \cite{Da4e,FrP1,Sil97} or Lemma \ref{LHimpliesHyp} below), this property is difficult to verify in practice, even in the case of isotropic materials (cf. \cite{Dac01,Davi91,DeTPZ12,GNH16,Horg96}). Necessary and sufficient conditions of strong ellipticity for two-dimensional isotropic materials have been discussed in \cite{Aube95,Dac01,Davi91,KnSt76}, and for three-dimensional media in \cite{Dac01,SimSpe83,WanAro96}. It is to be noted, however, that the elastic media considered in this paper constitute a wide class of materials for which the rank-one convexity assumption is remarkably easy to verify even in higher space dimensions (see Section \ref{secHadhyp} below). Finally, we observe that the Legendre-Hadamard condition plays a crucial role in delimiting ``stability'' boundaries for weak local minima in the calculus of variations (cf. Giaquinta and Hildebrandt \cite{GiaHil96vI}; see also \cite{BJ87,BJ92,FR-C00,GrTr16,Og84}) assuring its importance in elasticity theory, in general, and in the analysis of elastic shocks and phase boundaries, in particular.
\end{remark}

The equations of elastodynamics \eqref{elasto} can be recast a first-order system of conservation laws of the form \eqref{HSCL} in Appendix \ref{sechypall} below when they are written in terms of the local velocity $v$ and of the deformation gradient $U$ (see \cite{Co1,FrP1,FrP3,Pl2}). Indeed, upon substitution we arrive at
\begin{equation}
\label{model}
\begin{array}{r}
U_t - \nabla_{x}  v = 0, \\
v_t - \Div_{x}  \, \sigma(U) = 0,
\end{array}
\end{equation}
where $t \in [0,\infty)$, $x \in \Omega \subseteq \R^d$, which is subject to the additional physical constraint
\begin{equation}
\label{curlfree}
\hbox{curl}_x \, U = 0.
\end{equation}
Equations \eqref{model} and \eqref{curlfree} are concise forms of the $d^2 + d$ first order equations of motion,
\begin{equation}
\label{modelij}
\begin{aligned}
\partial_t U_{ij} - \partial_{x_j} v_i &= 0, & \quad i,j = 1, \ldots, d,\\
\partial_t v_i - \sum_{j=1}^d \partial_{x_j} \Big( \frac{\partial W}{\partial U_{ij}} \Big) &= 0, &\quad i = 1, \ldots, d,
\end{aligned}
\end{equation}
and of the constraints
\begin{equation}
\label{curlfreeij}
\partial_{x_k} U_{ij} = \partial_{x_j} U_{ik}, \qquad i,j,k = 1, \ldots, d,
\end{equation}
respectively. Therefore, if we denote 
\[
u = \begin{pmatrix} U_1 \\ \vdots \\ U_d \\ v \end{pmatrix} \in \R^{d^2 + d}, \qquad  f^j(u) = - \begin{pmatrix} 0 \\ \vdots \\ v \\ \vdots \\ 0 \\ \sigma(U)_j \end{pmatrix} \in \R^{d^2 + d}, \; j = 1, \ldots, d,
\]
where the vector $v$ appears in the $j$-th position in the expression for $f^j(u)$, system \eqref{model} can be written as a system of $n = d^2 + d$ conservations laws of the form \eqref{HSCL}, with conserved quantities $u \in \R^n$ and fluxes $f^j(u) \in C^2(\cU; \R^n)$, $1 \leq j \leq d$. Here the open, connected set of admissible states is 
\[
\cU = \{ (U,v) \in \R^{d \times d} \times \R^d \, : \, \det U > 0 \}. 
\]
Under this notation, the Jacobians 
$A^j(u) := Df^j(u) \in \R^{n \times n}$ are given by
\[
A^j(u) = - \begin{pmatrix} & & & 0 \\ & & & \vdots  \\ & 0 & & \Id \\ & & & \vdots  \\ & & & 0 \\ B_1^j(U) & \cdots & B_d^j(U) & 0  \end{pmatrix} \in \R^{(d^2 + d) \times (d^2 + d)},
\]
for all $j = 1, \ldots, d$ (see \cite{FrP1} for details). Notice that the Jacobians depend on $u = (U,v)^\top$ only through the deformation gradient. Thus, with a slight abuse of notation we write, from this point on,
\[
A^j = A^j(U), \qquad U \in \M_+^d, \quad j = 1, \ldots, d.
\]
The symbol \eqref{defA} is then defined as
\[
A(\xi,U) = \sum_{j=1}^d \xi_j A^j(U), \qquad \xi \in \R^d, \; \; U \in \M_+^d.
\]

As discussed in \cite{FrP1}, due to technical reasons that pertain to the applicability of the stability theory of shocks, we also require the following constant multiplicity property.
\begin{defin}[constant multiplicity assumption]
\label{constmultass}
The energy density function $W = W(U)$ satisfies the constant multiplicity property at $U$ if, for all frequencies $\xi \in \R^d$, $\xi \neq 0$, the eigenvalues of the acoustic tensor $\AT = \AT(\xi,U)$ are all semi-simple (their geometric and algebraic multiplicities coincide) and their multiplicity is independent of $\xi$ and $U$.
\end{defin}

\begin{lemma}
\label{LHimpliesHyp}
Suppose that $W = W(U)$ satisfies the Legendre-Hadamard condition \eqref{LHcond} and the constant multiplicity assumption. Assume that for each $(\xi,U) \in \R^d \backslash \{0\} \times \M^d_+$, the associated acoustic tensor $\AT = \AT(\xi,U)$ has $k$ distinct semi-simple positive eigenvalues, $0 < \kappa_1(\xi,U) < \ldots < \kappa_k(\xi,U)$, $1 \leq k \leq d$, with constant multiplicities $\widetilde{m}_l$, $1 \leq l \leq k$, such that $\sum_{l=1}^k \widetilde{m}_l = d$. Then system \eqref{model} (or equivalently, system \eqref{elasto}) is hyperbolic with characteristic velocities (eigenvalues of the symbol $A(\xi,U)$) given by:
\begin{itemize}
\item[(i)] $\widetilde{a}_0(\xi,U) = 0$ with constant multiplicity $\widetilde{m}_0 = d^2 - d$;
\item[(ii)] $\widetilde{a}_1(\xi,U) = - \sqrt{\kappa_k(\xi,U)} < \ldots < \widetilde{a}_k(\xi,U) = - \sqrt{\kappa_1(\xi,U)} < 0$, each with constant multiplicity $\widetilde{m}_l$, $1 \leq l \leq k$; and,
\item[(iii)] $0 < \widetilde{a}_{k+1}(\xi,U) = \sqrt{\kappa_1(\xi,U)} < \ldots < \widetilde{a}_{2k}(\xi,U) =  \sqrt{\kappa_k(\xi,U)}$, each with constant multiplicity $\widetilde{m}_{k+l} := \widetilde{m}_l$, $1 \leq l \leq k$.
\end{itemize}
\end{lemma}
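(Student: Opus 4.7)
The plan is to analyze the eigenvalue problem for the symbol $A(\xi,U) = \sum_{j=1}^d \xi_j A^j(U)$ directly, exploiting the block structure of the matrices $A^j(U)$. Writing a generic vector in $\R^{d^2+d}$ as $(W_1,\ldots,W_d,v)^\top$ with $W_j,v\in\R^d$, the eigenvalue equation $A(\xi,U)(W_1,\ldots,W_d,v)^\top = \lambda (W_1,\ldots,W_d,v)^\top$ decouples into the $d+1$ block relations
\[
-\xi_j v = \lambda W_j,\quad j=1,\ldots,d, \qquad -\sum_{i,j=1}^d \xi_j B_i^j(U)\, W_i = \lambda v.
\]

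For the nonzero spectrum, I would first consider $\lambda\neq 0$. The top $d$ equations yield $W_j = -(\xi_j/\lambda) v$, and substituting in the last equation produces $\sum_{i,j} \xi_i\xi_j B_i^j(U) v = \lambda^2 v$, i.e., $\tAT(\xi,U) v = \lambda^2 v$. By the Legendre--Hadamard hypothesis \eqref{LHcond}, $\tAT(\xi,U)$ is symmetric positive definite, so its eigenvalues are precisely the $\kappa_l(\xi,U) > 0$, $1\leq l\leq k$, with (semi-simple) constant multiplicities $\widetilde m_l$. Hence the nonzero eigenvalues of $A(\xi,U)$ are exactly $\pm\sqrt{\kappa_l(\xi,U)}$, yielding assertions (ii) and (iii); each real eigenvector $\eta$ of $\tAT$ associated to $\kappa_l$ produces the eigenvector $(-\xi_1\eta/\lambda,\ldots,-\xi_d\eta/\lambda,\eta)^\top$ of $A$, and linear independence of $\tAT$-eigenvectors carries over verbatim, so the geometric multiplicity of $\pm\sqrt{\kappa_l}$ as eigenvalues of $A$ is at least $\widetilde m_l$.

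For the zero eigenvalue, setting $\lambda = 0$ in the top block and using $\xi\neq 0$ forces $v=0$, after which the remaining constraint is $L(W_1,\ldots,W_d) := \sum_{i,j=1}^d \xi_j B_i^j(U) W_i = 0$, a system of $d$ scalar equations on $\R^{d^2}$. The key observation is that restricting $L$ to the $d$-dimensional subspace $\{(W_1,\ldots,W_d) : W_i = \xi_i w,\ w\in\R^d\}$ gives $L(\xi_1 w,\ldots,\xi_d w) = \tAT(\xi,U)w$, which is surjective onto $\R^d$ since $\tAT(\xi,U)$ is invertible. Consequently $\mathrm{rank}(L) = d$ and $\ker L$ has dimension $d^2 - d$, establishing (i) with $\widetilde m_0 = d^2 - d$ as the geometric multiplicity at $\lambda = 0$.

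Finally, a dimension count $\widetilde m_0 + 2\sum_{l=1}^k \widetilde m_l = (d^2-d) + 2d = d^2 + d = n$ confirms that we have accounted for the full algebraic multiplicity, so all geometric multiplicities agree with the algebraic ones and every eigenvalue is semi-simple. Hence $A(\xi,U)$ is real-diagonalizable, which is the standard criterion for hyperbolicity of the first-order system \eqref{model}. The main subtlety I expect is the zero-eigenvalue dimension count: one must argue that the $d$ row-constraints in $L$ are genuinely independent, and the cleanest way to see this is precisely through the invertibility of $\tAT(\xi,U)$ just described; the ordering and constant multiplicity claims in (ii)--(iii) then follow directly from the constant multiplicity assumption on the acoustic tensor.
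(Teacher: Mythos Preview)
Your argument is correct. The paper does not supply its own proof here but simply cites \cite{FrP1} (Lemma 2 and Corollary 2); your direct block-analysis of the eigenvalue problem for $A(\xi,U)$ is the standard route and is in all likelihood precisely what that reference contains, so there is nothing to contrast.
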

\begin{proof}
See Lemma 2 and Corollary 2 in \cite{FrP1}.
\end{proof}

We have an immediate
\begin{corollary}
\label{correlabel}
If $W = W(U)$ satisfies the Legendre-Hadamard condition \eqref{LHcond} and the constant multiplicity assumption for each $U \in \M^d_+$, then system \eqref{model} is hyperbolic in the connected open domain $\cU$ of state variables. Moreover, the characteristic velocities can be relabeled as
\[
\begin{aligned}
a_1(\xi,U) &:= \tilde{a}_1(\xi,U) = - \sqrt{\kappa_k(\xi,U)}, \\
\vdots & \\
a_k(\xi,U) &:= \tilde{a}_k(\xi,U) = - \sqrt{\kappa_1(\xi,U)}, \\
a_{k+1}(\xi,U) &:= \tilde{a}_0(\xi,U) \equiv 0, \\
a_{k+2}(\xi,U) &:= \tilde{a}_{k+1}(\xi,U) =  \sqrt{\kappa_1(\xi,U)}, \\
\vdots & \\
a_{2k + 1}(\xi,U) &:= \tilde{a}_{2k}(\xi,U) =  \sqrt{\kappa_k(\xi,U)}, \\
\end{aligned}
\]
so that
\[
a_1(\xi,U) < \ldots < a_k(\xi,U) < a_{k+1}(\xi,U) = 0 < a_{k+2}(\xi,U) < \ldots < a_{2k+1}(\xi,U),
\]
for each $(\xi,U)  \in \R^d \backslash \{0\} \times \M^d_+$, denoting the $2k+1$ distinct eigenvalues of $A(\xi,U)$, with constant algebraic (and geometric) multiplicities $\widetilde{m}_l$ for $1 \leq l \leq k$, $\widetilde{m}_{k+1} = d^2 -d$ and $\widetilde{m}_{k+1+l}:= \widetilde{m}_l$ for $1 \leq l \leq k$ with $\sum_{l=1}^k \widetilde{m}_l = d$.
\end{corollary}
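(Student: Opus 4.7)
The plan is straightforward: this corollary is essentially a bookkeeping consequence of Lemma~\ref{LHimpliesHyp}, so the proof amounts to verifying that (i) hyperbolicity holds pointwise on the open, connected domain $\cU$ of admissible states, and (ii) the zero characteristic velocity $\widetilde{a}_0$ can be inserted in the correct position of the linear order already provided by the nonzero eigenvalues. Note that $\cU$ is trivially open and connected, being the product of the connected set $\M_+^d$ with $\R^d$, so no topological argument is needed on that front.

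First I would fix an arbitrary $(\xi,U) \in (\R^d \setminus \{0\}) \times \M_+^d$ and apply Lemma~\ref{LHimpliesHyp} to obtain the $2k+1$ pairwise distinct real eigenvalues of the symbol $A(\xi,U)$: the $k$ negatives $-\sqrt{\kappa_k(\xi,U)} < \ldots < -\sqrt{\kappa_1(\xi,U)}$, the zero eigenvalue, and the $k$ positives $\sqrt{\kappa_1(\xi,U)} < \ldots < \sqrt{\kappa_k(\xi,U)}$, each semi-simple with multiplicity independent of $(\xi,U)$. The total algebraic multiplicity is
\begin{equation*}
2\sum_{l=1}^k \widetilde{m}_l + (d^2 - d) = 2d + d^2 - d = d^2 + d = n,
\end{equation*}
which matches the dimension of the phase space. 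Since each eigenvalue is semi-simple, $A(\xi,U)$ is diagonalizable over $\R$ with a complete set of real eigenvectors, so system \eqref{model} is (constantly) hyperbolic throughout $\cU$.

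The relabeling is then a matter of inserting the zero eigenvalue into the chain. Since $\widetilde{a}_k(\xi,U) = -\sqrt{\kappa_1(\xi,U)} < 0 = \widetilde{a}_0(\xi,U) < \sqrt{\kappa_1(\xi,U)} = \widetilde{a}_{k+1}(\xi,U)$, the value $0$ lies strictly between the largest negative and the smallest positive eigenvalue. The new indexing simply shifts the positive block up by one to make room for $a_{k+1} \equiv 0$, producing the strictly increasing list in the statement. The multiplicity assertions transfer verbatim: $\widetilde{m}_{k+1} = d^2-d$ is the dimension of the kernel of $A(\xi,U)$, and the reflection symmetry $\widetilde{m}_{k+1+l} = \widetilde{m}_l$ comes directly from item (iii) of Lemma~\ref{LHimpliesHyp}.

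There is no genuine obstacle in this proof. The only point that requires attention is verifying that hyperbolicity is \emph{uniform} across $\cU$, which is immediate from the fact that the multiplicities $\widetilde{m}_l$ are assumed independent of $(\xi,U)$ in the constant multiplicity hypothesis, so that the spectral structure of $A(\xi,U)$ does not degenerate anywhere in the state space.
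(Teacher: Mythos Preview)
Your proposal is correct and is precisely the kind of bookkeeping the paper has in mind; in fact the paper does not provide a separate proof of this corollary at all, introducing it merely as ``an immediate'' consequence of Lemma~\ref{LHimpliesHyp}. Your write-up just spells out the obvious multiplicity count and the insertion of the zero eigenvalue into the ordered list, which is exactly the intended (and only reasonable) argument.
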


\subsection{Strong ellipticity of compressible Hadamard materials}
\label{secHadhyp}

Compressible hyperelastic materials of Hadamard type (cf. \cite{Hay68,Joh66})  are characterized by energy density functions of the form \eqref{Hadamardmat} where $\mu > 0$ is the constant shear modulus and $h$ is the volumetric energy density satisfying hypotheses \eqref{H1} and \eqref{H2}. In the present context, condition \eqref{H1} is a minimal regularity requirement needed for the stability calculations. Assumption \eqref{H2} is a sufficient convexity condition for the material to be strongly elliptic as we shall see below. From its definition, it is then evident that any energy density $W$ for this class of elastic materials satisfies the principles of frame indifference, material symmetry and objectivity. For a discussion on the physical model and its main properties, see Appendix \ref{haddimate}. 

Let us now compute the acoustic tensor for the class of compressible Hadamard materials and verify the Legendre-Hadamard condition in any space dimension.  It is already known that, for Hadamard materials with energy density of the form \eqref{Hadamardmat}, condition \eqref{H2} is equivalent to Legendre-Hadamard condition for all deformations (see, e.g., \cite{ArAi98,CMR94,JiKn91}). In this paper, we also provide a proof of this fact in view that the calculation of the acoustic tensor and of its eigenvalues is mandatory for the shock stability analysis (see Corollary \ref{corHadLG} below). The contributions are, (i) that our proof holds for any space dimension $d \geq 2$, and, (ii) that we also verify the constant multiplicity assumption (see Definition \ref{constmultass}). We start by proving an auxiliary result.

\begin{lemma}
\label{lemmaux}
For any $U \in \M^d_+$ with $J = \det U > 0$ there holds
\begin{equation}
\label{usefula}
\frac{\partial J}{\partial U_{ij}} = ( \Cof U)_{ij}, 
\end{equation}
for all $1 \leq i,j \leq d$, and 
\begin{equation}
\label{usefulb}
\frac{\partial }{\partial U_{qi}} (\Cof U)_{pj} = \frac{1}{J} \left( (\Cof U)_{qi} (\Cof U)_{pj} - (\Cof U)_{pi} (\Cof U)_{qj} \right), 
\end{equation}
for all $1 \leq i,j,p,q \leq d$.
\end{lemma}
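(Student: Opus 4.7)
The identity \eqref{usefula} is the classical Jacobi formula for the derivative of the determinant. The plan is to expand $J=\det U$ along column $j$ via the Laplace/cofactor expansion
\[
J = \sum_{k=1}^{d} U_{kj}\,(\Cof U)_{kj},
\]
and then observe that each cofactor $(\Cof U)_{kj}=(-1)^{k+j}\det(U'_{(k,j)})$ is, by definition, independent of every entry in column $j$, and in particular independent of $U_{ij}$. Differentiating the expansion with respect to $U_{ij}$ therefore only picks up the $k=i$ term, yielding $(\Cof U)_{ij}$ as required.

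For identity \eqref{usefulb}, my plan is to reduce the computation to derivatives of $J$ and of $U^{-1}$. Since $J>0$, the matrix $U$ is invertible, and the identity \eqref{exprcof} recalled in the notation section gives
\[
(\Cof U)_{pj} = J\,(U^{-1})_{jp}, \qquad \text{equivalently}\qquad (U^{-1})_{lm} = \frac{(\Cof U)_{ml}}{J}.
\]
Applying the product rule yields
\[
\frac{\partial (\Cof U)_{pj}}{\partial U_{qi}} \;=\; \frac{\partial J}{\partial U_{qi}}\,(U^{-1})_{jp} \;+\; J\,\frac{\partial (U^{-1})_{jp}}{\partial U_{qi}}.
\]
For the first summand I use part \eqref{usefula} with the indices $(q,i)$ to replace $\partial J/\partial U_{qi}$ by $(\Cof U)_{qi}$, and express $(U^{-1})_{jp}$ in terms of cofactors, producing the contribution $(\Cof U)_{qi}(\Cof U)_{pj}/J$.

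For the derivative of the inverse I would use the standard formula
\[
\frac{\partial (U^{-1})_{jp}}{\partial U_{qi}} = -\,(U^{-1})_{jq}(U^{-1})_{ip},
\]
which follows by differentiating the identity $\sum_{k} U_{lk}(U^{-1})_{km} = \delta_{lm}$ with respect to $U_{qi}$ and solving for the derivative of $U^{-1}$ by left-multiplication with $U^{-1}$. Converting the two factors back to cofactors via $J(U^{-1})_{jq}=(\Cof U)_{qj}$ and $J(U^{-1})_{ip}=(\Cof U)_{pi}$ turns the second summand into $-(\Cof U)_{qj}(\Cof U)_{pi}/J$, and adding the two contributions gives exactly \eqref{usefulb}.

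There is no conceptual obstacle here; the computation is a standard combination of Jacobi's formula with the derivative-of-the-inverse formula. The only point requiring care is the index bookkeeping in the transpose relation $(U^{-1})_{lm} = (\Cof U)_{ml}/J$, since swapping the roles of the two cofactor indices is what produces the antisymmetric structure $(\Cof U)_{qi}(\Cof U)_{pj}-(\Cof U)_{pi}(\Cof U)_{qj}$ on the right-hand side. I would therefore write out each index substitution explicitly to make the final comparison with \eqref{usefulb} transparent.
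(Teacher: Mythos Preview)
Your proof is correct. Both arguments differentiate the fundamental relation linking $\Cof U$, $U$, and $J$; the paper differentiates the matrix identity $(\Cof U)U^\top = J\Id$ directly and then right-multiplies by $\Cof U$ to isolate $\partial_{U_{qi}}(\Cof U)$, whereas you first rewrite $(\Cof U)_{pj}=J(U^{-1})_{jp}$ and invoke the standard derivative-of-the-inverse formula, which amounts to the same manipulation carried out componentwise.
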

\begin{proof}
Formula \eqref{usefula} follows directly from expression \eqref{exprcof} above. Let us prove \eqref{usefulb}. Differentiating the relation $\big(\Cof U\big) U^\top= J \Id$ with respect to $U_{qi}$ and multiplying from the right by $\Cof U$ we obtain
\[
\frac{\partial}{\partial U_{qi}}\big(\Cof U \big)U^\top\Cof U+\Cof U\Big(\frac{\partial}{\partial U_{qi}}U^\top\Big)\Cof U=\Big(\frac{\partial J}{\partial U_{qi}} \Big)\Cof U,
\]
that is,
\[
J \Big(\frac{\partial}{\partial U_{qi}}(\Cof U)\Big) +\Cof U\big(\ei\otimes \eq \big)\Cof U = (\Cof U)_{qi}\Cof U.
\]
Solving for $\frac{\partial}{\partial U_{qi}}(\Cof U)$ yields
\[
\frac{\partial}{\partial U_{qi}}(\Cof U)=\frac{1}{J}\left( (\Cof U)_{qi} \Cof U-\Cof U\big(\ei\otimes \eq\big)\Cof U\right),
\]
for any $1 \leq q,i \leq d$. Therefore, for all $1 \leq p,j \leq d$,
\[
\begin{aligned}
\frac{\partial }{\partial U_{qi}} (\Cof U)_{pj} &= \ep^\top \frac{\partial}{\partial U_{qi}}(\Cof U) \ej \\
&= \frac{1}{J} \left((\Cof U)_{qi} \, \ep^\top (\Cof U) \ej - \ep^\top (\Cof U)\big(\ei\otimes \eq\big)(\Cof U) \ej \right)\\
&= \frac{1}{J} \left((\Cof U)_{qi} (\Cof U)_{pj} - \ep^\top ((\Cof U) \ei )  (\eq^\top (\Cof U)) \ej \right)\\
&= \frac{1}{J} \left((\Cof U)_{qi} (\Cof U)_{pj} - (\Cof U )_{pi} (\Cof U)_{qj} \right).
\end{aligned}
\]
\end{proof}

\begin{lemma} 
\label{lemBsH}
For a compressible Hadamard material in dimension $d \geq 2$ the matrices \eqref{defBij} are given by
\begin{equation}
\label{exprBijs}
\begin{aligned}
B_i^j(U) &= \mu \, \delta_i^j\, \Id + h''(J) \big( (\Cof U)_j \otimes (\Cof U)_i \big) + \\ & \; + \frac{h'(J)}{J} \Big( (\Cof U)_j \otimes (\Cof U)_i - (\Cof U)_i \otimes (\Cof U)_j \Big),
\end{aligned}
\end{equation}
where $J = \det U > 0$, $(\Cof U)_k$ denotes the $k$-th column of the cofactor matrix $\Cof U$ and $\delta_i^j$ is the Kronecker symbol, $\delta_i^j =\begin{cases} 1, & i=j,\\ 0, &i\neq j.\end{cases}$
\end{lemma}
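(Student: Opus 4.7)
The plan is a direct two-step differentiation of the stored energy \eqref{Hadamardmat}, followed by bookkeeping to recognize the resulting rank-one tensor products.

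First I would compute the first Piola-Kirchhoff stress. From $W(U) = \tfrac{\mu}{2} \Tr(U^\top U) + h(J)$ and the identity \eqref{usefula} of Lemma \ref{lemmaux}, a single differentiation yields
\[
\frac{\partial W}{\partial U_{lj}} = \mu\, U_{lj} + h'(J)\,(\Cof U)_{lj},
\]
for all $1 \le l,j \le d$. This is the entry-wise form of $\sigma(U) = \mu U + h'(J)\Cof U$.

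Next I would differentiate once more with respect to $U_{ki}$. The quadratic trace term contributes $\mu\,\delta_l^k\,\delta_j^i$. For the volumetric term I apply the product rule: differentiating $h'(J)$ produces $h''(J)\,(\partial J/\partial U_{ki})\,(\Cof U)_{lj} = h''(J)\,(\Cof U)_{ki}(\Cof U)_{lj}$ using \eqref{usefula} again, while differentiating $(\Cof U)_{lj}$ uses \eqref{usefulb} of Lemma \ref{lemmaux} (with the role of $(q,i,p,j)$ played by $(k,i,l,j)$) to give
\[
\frac{h'(J)}{J}\bigl( (\Cof U)_{ki}(\Cof U)_{lj} - (\Cof U)_{li}(\Cof U)_{kj}\bigr).
\]
Combining, the $(l,k)$-entry of $B_i^j(U) = \partial^2 W/\partial U_{lj}\partial U_{ki}$ reads
\[
\mu\,\delta_i^j\,\delta_l^k + h''(J)\,(\Cof U)_{lj}(\Cof U)_{ki} + \frac{h'(J)}{J}\bigl((\Cof U)_{lj}(\Cof U)_{ki} - (\Cof U)_{li}(\Cof U)_{kj}\bigr).
\]

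Finally, I would translate to tensor-product notation. Since the $(l,k)$-entry of $a\otimes b$ is $a_l b_k$, we have $\bigl((\Cof U)_j \otimes (\Cof U)_i\bigr)_{lk} = (\Cof U)_{lj}(\Cof U)_{ki}$ and analogously $\bigl((\Cof U)_i \otimes (\Cof U)_j\bigr)_{lk} = (\Cof U)_{li}(\Cof U)_{kj}$. Substituting yields exactly formula \eqref{exprBijs}. The only real obstacle here is notational: four distinct indices $i,j,k,l$ are in play and must be mapped correctly to the $(q,i,p,j)$ appearing in Lemma \ref{lemmaux}; once that correspondence is fixed and the matrix entries identified with the components of the tensor products $(\Cof U)_j\otimes(\Cof U)_i$ and $(\Cof U)_i\otimes(\Cof U)_j$, the proof is just the product rule applied twice.
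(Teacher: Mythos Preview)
Your proof is correct and essentially identical to the paper's own argument: both compute $\sigma_{lj} = \mu U_{lj} + h'(J)(\Cof U)_{lj}$, differentiate once more using the two identities of Lemma~\ref{lemmaux}, and then recognize the resulting products $(\Cof U)_{lj}(\Cof U)_{ki}$ and $(\Cof U)_{li}(\Cof U)_{kj}$ as the $(l,k)$-entries of the appropriate rank-one tensors. The only cosmetic difference is that the paper cites Corollary~\ref{corPKHad} for the Piola--Kirchhoff tensor rather than rederiving it inline, and uses the letters $(p,q)$ where you use $(l,k)$.
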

\begin{proof}
By definition of the matrices \eqref{defBij}, and by Corollary \ref{corPKHad} in Appendix \ref{haddimate} and Lemma \ref{lemmaux}, for each $1 \leq p, q \leq d$ there holds
\begin{equation}
\label{cof0}
\begin{split}
B_i^j(U)_{pq} =\frac{\partial \sigma_{pj}}{\partial U_{qi}} &=\mu\frac{\partial U_{pj}}{\partial U_{qi}}+\frac{\partial}{\partial U_{qi}}\Big(h'(J)(\Cof U)_{pj}\Big)\\
&= \mu\delta_i^j \delta_p^q + h''(J) \frac{\partial J}{\partial U_{qi}} (\Cof U)_{pj} + h'(J)\frac{\partial}{\partial U_{qi}}(\Cof U)_{pj}\\
&= \mu\delta_i^j \delta_p^q + h''(J)(\Cof U)_{qi}(\Cof U)_{pj} + \\ &\; \, + \frac{h'(J)}{J} \left((\Cof U)_{qi} (\Cof U)_{pj} - (\Cof U)_{pi} (\Cof U)_{qj} \right).
\end{split}
\end{equation}
Now, since 
\[
\big(\Cof U\big)_{pi}\big(\Cof U\big)_{qj} = \Big((\Cof U)_{i}\otimes(\Cof U)_{j}\Big)_{pq},
\]
for all $1 \leq i,j,p,q \leq d$, substituting into \eqref{cof0} we arrive at
\[
\begin{aligned}
B_i^j(U)_{pq} &= \mu\delta_i^j \delta_p^q + h''(J) \Big((\Cof U)_{j}\otimes(\Cof U)_{i}\Big)_{pq} + \\ & \;\; +\frac{h'(J)}{J} \left( \Big((\Cof U)_{j}\otimes(\Cof U)_{i}\Big)_{pq} - \Big((\Cof U)_{i}\otimes(\Cof U)_{j}\Big)_{pq}\right),
\end{aligned}
\]
yielding the result.
\end{proof}

\begin{corollary}
(a) In dimension $d =2$ and for each $U \in \M_2^+$ we have
\[
\begin{aligned}
B_{i}^{i} (U)&=\mu \mathbb{I}_2+h''(J)\Big((\Cof U)_{i}\otimes(\Cof U)_{i}\Big),\qquad i=1,2\\
B_{1}^{2}(U)&=h''(J)\Big((\Cof U)_{2}\otimes(\Cof U)_{1}\Big)+ h'(J)(\hat{e}_2\otimes \hat{e}_1 - \hat{e}_1\otimes \hat{e}_2)\\
B_{2}^{1}(U)&= B_{1}^{2}(U) ^\top.
\end{aligned}
\]

(b) In dimension $d=3$ and for each $U \in \M_3^+$ we have
\[
\begin{aligned}
B_{i}^{i}  (U)&=\mu \mathbb{I}_3+h''(J)\Big((\Cof U)_{i}\otimes(\Cof U)_{i}\Big),\quad i=1,2,3\\
B_{1}^{2} (U)&=h''(J)\Big((\Cof U)_{2}\otimes(\Cof U)_{1}\Big)+ h'(J)[U_3]_{\times}\\
B_{1}^{3} (U)&=h''(J)\Big((\Cof U)_{3}\otimes(\Cof U)_{1}\Big)- h'(J)[U_2]_{\times}\\
B_{3}^{2} (U)&=h''(J)\Big((\Cof U)_{2}\otimes(\Cof U)_{3}\Big)+h'(J)[U_1]_{\times}\\
B_{2}^{1} (U)&=B_{1}^{2} (U)^\top, \quad B_{3}^{1} (U)=B_{1}^{3} (U)^\top,\quad B_{2}^{3} (U)=B_{3}^{2} (U)^\top ,
\end{aligned}
\]
where, for any vector $b = (b_1, b_2, b_3)^\top \in \R^3$, $[b]_{\times}$ is the skew-symmetric matrix that represents the vector cross product, that is, $[a]_{\times} = \left( \begin{smallmatrix}
  0&-b_3 &b_2\\ b_3&0 & -b_1\\-b_2 & b_1 & 0
\end{smallmatrix} \right)$.
\end{corollary}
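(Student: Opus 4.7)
The approach is a direct specialization of the general formula of Lemma \ref{lemBsH} to dimensions $d = 2$ and $d = 3$; all that is needed is to rewrite the skew-symmetric correction $(h'(J)/J)\big((\Cof U)_j \otimes (\Cof U)_i - (\Cof U)_i \otimes (\Cof U)_j\big)$ in the compact notational forms of the statement. The diagonal case $i = j$ is immediate: the skew-symmetric combination vanishes identically, and one reads off $B_i^i(U) = \mu \Id + h''(J)\big((\Cof U)_i \otimes (\Cof U)_i\big)$, which gives the top line of both parts (a) and (b). Moreover, the transpose relations $B_j^i(U) = B_i^j(U)^\top$ follow from the symmetry of second partial derivatives (equivalently, by swapping the indices $i, j$ in Lemma \ref{lemBsH} and using $(a \otimes b)^\top = b \otimes a$), so only the genuinely off-diagonal pairs $(i, j) = (1, 2)$ in part (a) and $(i, j) \in \{(1,2), (1,3), (3,2)\}$ in part (b) need to be treated independently.

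For part (a), the unique off-diagonal pair $(i,j) = (1,2)$ admits a direct $2 \times 2$ computation: writing $(\Cof U)_1 = (U_{22},-U_{12})^\top$ and $(\Cof U)_2 = (-U_{21},U_{11})^\top$, one verifies
\[
(\Cof U)_2 \otimes (\Cof U)_1 - (\Cof U)_1 \otimes (\Cof U)_2 = (U_{11}U_{22}-U_{12}U_{21})(\hat{e}_2 \otimes \hat{e}_1 - \hat{e}_1 \otimes \hat{e}_2) = J(\hat{e}_2 \otimes \hat{e}_1 - \hat{e}_1 \otimes \hat{e}_2),
\]
so the factor $1/J$ cancels cleanly and the stated expression for $B_1^2(U)$ follows.

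For part (b), the key ingredient is the interpretation of the cofactor columns as cross products, $(\Cof U)_c = U_a \times U_b$ for every cyclic permutation $(a,b,c)$ of $(1,2,3)$, together with the elementary $3\times 3$ identity $x \otimes y - y \otimes x = [y \times x]_\times$ valid for all $x, y \in \R^3$ (quickly checked componentwise via $\epsilon$-symbol manipulations). These reduce the skew-symmetric correction to
\[
\frac{1}{J}\bigl((\Cof U)_j \otimes (\Cof U)_i - (\Cof U)_i \otimes (\Cof U)_j\bigr) = \frac{1}{J}\bigl[(\Cof U)_i \times (\Cof U)_j\bigr]_\times,
\]
and one is left to evaluate the quadruple cross product $(\Cof U)_i \times (\Cof U)_j$ for each of the three pairs. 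Applying the expansion $(a \times b) \times (c \times d) = [a,b,d]\,c - [a,b,c]\,d$ (with brackets denoting the scalar triple product), two of the four resulting triple products degenerate because a vector is repeated, while the surviving one equals $\pm J$ according to the parity of the permutation of $(1,2,3)$ it represents. After cancellation with the prefactor $1/J$, one obtains expressions of the form $\pm[U_k]_\times$ for the appropriate $k \in \{1,2,3\}$, which combined with the $h''(J)$ term yields the stated formulas.

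The main obstacle is the bookkeeping of the $\pm$ signs in part (b): one must verify case by case that the surviving triple product corresponds to a cyclic or anti-cyclic permutation of $(1,2,3)$, and that the sign of $[U_k]_\times$ is assigned accordingly. Once the three independent pairs $(1,2)$, $(1,3)$, and $(3,2)$ have been tabulated correctly, the three remaining off-diagonal entries of $B_i^j$ follow at no additional cost from the transpose relations, completing the proof.
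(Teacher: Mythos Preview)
Your approach is correct and is precisely the route the paper intends: the corollary is stated without proof, as an immediate specialization of Lemma \ref{lemBsH}, and your reduction via the identity $x\otimes y - y\otimes x = [y\times x]_\times$ together with the quadruple cross product expansion is the natural way to carry it out in dimension $d=3$.

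One remark on the sign bookkeeping you flagged as the main obstacle: if you carry your computation through for the pair $(i,j)=(3,2)$ you will find
\[
(\Cof U)_3 \times (\Cof U)_2 = (U_1\times U_2)\times(U_3\times U_1) = [U_1,U_3,U_1]\,U_2 - [U_2,U_3,U_1]\,U_1 = -J\,U_1,
\]
so the skew-symmetric correction in $B_3^2$ is $-h'(J)[U_1]_\times$, not $+h'(J)[U_1]_\times$ as printed in the statement. This is consistent with the transpose relation you invoke: since $[U_1]_\times$ is antisymmetric and $B_2^3$ carries $+h'(J)[U_1]_\times$, the identity $B_3^2=(B_2^3)^\top$ forces the opposite sign. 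Your method is sound; the discrepancy is a typo in the displayed formula for $B_3^2$ (equivalently, the line as written is the formula for $B_2^3$), not a flaw in your argument. The corollary is not used elsewhere in the paper, so this does not affect any downstream result.
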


\begin{lemma}[acoustic tensor for Hadamard materials]
For any Hadamard material in dimension $d \geq2$ its acoustic tensor is given by
\begin{equation}
\label{exprAT}
\AT(\xi,U) = \mu |\xi|^2 \Id + h''(J) \Big( \big( (\Cof U) \xi \big) \otimes \big( (\Cof U) \xi \big) \Big),
\end{equation}
for $\xi \in \R^d$, $\xi \neq 0$, $U \in \M^d_+$.
\end{lemma}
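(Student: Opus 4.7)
The plan is to substitute the explicit formula for the matrices $B_i^j(U)$ given by Lemma \ref{lemBsH} into the definition \eqref{defacouten} of the acoustic tensor and then simplify each of the three resulting sums. Writing
\[
\AT(\xi,U) = \sum_{i,j=1}^d \xi_i \xi_j \, B_i^j(U),
\]
the first contribution involves $\mu \sum_{i,j} \xi_i \xi_j \delta_i^j \Id = \mu |\xi|^2 \Id$. The second contribution is
\[
h''(J) \sum_{i,j=1}^d \xi_i \xi_j \, (\Cof U)_j \otimes (\Cof U)_i,
\]
which I would recognize as the outer product of $\sum_j \xi_j (\Cof U)_j = (\Cof U)\xi$ with itself, yielding $h''(J)\big((\Cof U)\xi\big)\otimes\big((\Cof U)\xi\big)$.

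The key observation is that the remaining piece, proportional to $h'(J)/J$, vanishes. Indeed, the bracket
\[
(\Cof U)_j \otimes (\Cof U)_i - (\Cof U)_i \otimes (\Cof U)_j
\]
is antisymmetric under the exchange $i \leftrightarrow j$, while the weight $\xi_i \xi_j$ is symmetric. Swapping indices in one half of the double sum and renaming shows that the sum equals its own negative and is therefore zero. Combining the three contributions produces exactly \eqref{exprAT}.

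I do not expect any genuine obstacle here; the result is essentially a bookkeeping exercise once Lemma \ref{lemBsH} is in hand. The only subtlety worth stating carefully is the antisymmetry/symmetry cancellation of the $h'(J)/J$ term, which is what makes the final expression depend only on $h''(J)$ and not on $h'(J)$. This feature will matter later because it means the lower-order volumetric term plays no role in determining the characteristic speeds, so one should double-check the index manipulation rather than quote it in passing.
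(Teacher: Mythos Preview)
Your proposal is correct and follows essentially the same idea as the paper's proof. The paper organizes the computation slightly differently---it first records the diagonal blocks $B_i^i(U)$ and the symmetrized off-diagonal blocks $B_i^j(U)+B_j^i(U)$ (where the $h'(J)/J$ term has already cancelled), then sums---but this is just a repackaging of the same antisymmetry/symmetry cancellation you identify.
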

\begin{proof}
First we notice that
\[
\begin{split}
B_{i}^{i}(U)&=\mu \Id + h''( J)\Big((\Cof U)( \ei \otimes \ei )(\Cof U)^\top\Big)\\
B_{i}^{j}(U)+B_{j}^{i}(U)&= h''( J)\Big((\Cof U)( \ei \otimes \ej +  \ej \otimes \ei )(\Cof U)^\top \Big),\quad i\neq j.
\end{split}
\]
Upon substitution of these formulae into the definition of the acoustic tensor \eqref{defacouten},
\[
\begin{split}
\AT(\xi,U)=\sum\limits_{i,j=1}^{d}\xi_i\xi_j B_{i}^{j}(U)&=\sum\limits_{i=1}^{d}\xi_{i}^{2} B_{i}^{i}(U)+\sum\limits_{i\neq j}\xi_i\xi_j\Big(B_{i}^{j}(U)+B_{j}^{i}(U)\Big)\\
&=\mu\Big(\sum\limits_{i=1}^{d}\xi_i^2\Big)\Id+h''(J) (\Cof U)\Big(\sum\limits_{i=1}^{d}\xi_{i}^{2}(\ei\otimes \ei)\Big)(\Cof U)^\top+\\& \quad \quad + h''(J) (\Cof U)\Big(\sum\limits_{i\neq j}\xi_{i}\xi_{j}(\ei\otimes \ej+\ej\otimes \ei)\Big)(\Cof U)^\top\\
&=\mu|\xi|^2 \Id+ h''(J) (\Cof U)\Big(\sum\limits_{i,j=1}^{d}\xi_{i}\xi_{j}(\ei\otimes \ej)\Big)(\Cof U)^\top\\
&=\mu|\xi|^2 \Id+ h''(J) (\Cof U)(\xi\otimes \xi)(\Cof U)^\top\\
&=\mu|\xi|^2 \Id+ h''(J) \big( (\Cof U )\xi\big) \otimes \big( (\Cof U)\xi\big),
\end{split}
\]
for all $\xi \in \R^d$, $\xi \neq 0$, $U \in \M^d_+$, as claimed.
\end{proof}

\begin{lemma}
\label{lemevQ}
For each $U \in \M^d_+$, $\xi \in \R^d$, $\xi \neq 0$, the eigenvalues of the acoustic tensor of a Hadamard material are $\kappa_1 (\xi,U) = \mu |\xi|^2$, with algebraic multiplicity equal to $d-1$, and $\kappa_2 (\xi,U) = \mu |\xi|^2 + h''(J)  \big|(\Cof U )\xi\big|^2$,
with algebraic multiplicity equal to one.
\end{lemma}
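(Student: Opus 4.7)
The plan is to exploit the rank-one structure of the acoustic tensor. Writing $a := (\Cof U)\xi \in \R^d$, the expression \eqref{exprAT} reads
\[
\AT(\xi,U) = \mu|\xi|^2 \Id + h''(J)\, a \otimes a,
\]
which is a rank-one perturbation of a multiple of the identity. Recall that for any $a \in \R^d$ the matrix $a \otimes a$ acts on a vector $\eta \in \R^d$ by $(a \otimes a)\eta = (a^\top \eta)\, a$, so its eigenvalues are $|a|^2$ with eigenvector $a$ and $0$ on the orthogonal complement $a^\perp$ (of dimension $d-1$).

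First I would check that $a \neq 0$ whenever $\xi \neq 0$. From \eqref{exprcof} and $J > 0$ we get $\det(\Cof U) = J^{d-1} > 0$, so $\Cof U$ is invertible and $a = (\Cof U)\xi \neq 0$. Consequently the decomposition $\R^d = \mathrm{span}\{a\} \oplus a^\perp$ is well-defined, and I would test the two subspaces separately against $\AT(\xi,U)$.

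For any $\eta \in a^\perp$ one has $(a \otimes a)\eta = (a^\top \eta)a = 0$, so
\[
\AT(\xi,U)\eta = \mu|\xi|^2 \eta,
\]
showing that $\kappa_1(\xi,U) = \mu|\xi|^2$ is an eigenvalue with geometric multiplicity at least $d-1$. For $\eta = a$ one has $(a \otimes a)a = |a|^2 a$, so
\[
\AT(\xi,U) a = \bigl(\mu|\xi|^2 + h''(J)|a|^2\bigr) a = \bigl(\mu|\xi|^2 + h''(J)|(\Cof U)\xi|^2\bigr) a,
\]
giving $\kappa_2(\xi,U) = \mu|\xi|^2 + h''(J)|(\Cof U)\xi|^2$ as an eigenvalue with eigenvector $a$. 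Since hypothesis \eqref{H2} together with $a \neq 0$ yields $\kappa_2 > \kappa_1$, these are distinct; this accounts for $d$ eigenvalues (counting multiplicity) and therefore the algebraic multiplicities are exactly $d-1$ and $1$, respectively.

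No obstacle is anticipated; the only subtle points are ensuring $(\Cof U)\xi \neq 0$ (handled by $J > 0$) and identifying the multiplicities as algebraic rather than merely geometric, which follows automatically because the symmetric matrix $\AT(\xi,U)$ is diagonalizable and the two eigenspaces together span $\R^d$.
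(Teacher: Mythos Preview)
Your proof is correct but takes a somewhat different route from the paper's. The paper computes the characteristic polynomial directly via Sylvester's determinant identity, obtaining
\[
\det\bigl(\AT(\xi,U)-\kappa\Id\bigr)=(\mu|\xi|^2-\kappa)^{d-1}\bigl(\mu|\xi|^2-\kappa+h''(J)|(\Cof U)\xi|^2\bigr),
\]
from which the algebraic multiplicities can be read off immediately. You instead exploit the rank-one structure to exhibit the eigenspaces explicitly and then count dimensions. Your approach is more elementary in that it avoids Sylvester's identity, and it has the pleasant side effect of producing the eigenvector $(\Cof U)\xi$ for $\kappa_2$ along the way---something the paper establishes separately in the subsequent Lemma~\ref{lemQev}. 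The paper's approach, in turn, delivers the algebraic multiplicities without any appeal to diagonalizability (though, as you note, the argument via symmetry is straightforward; in fact even the bare inequality $\text{alg.}\ge\text{geom.}$ together with the dimension count would suffice).
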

\begin{proof}
By inspection of expression \eqref{exprAT} for the acoustic tensor, which is of the form $a \Id + b ( w \otimes w)$ with $a, b \in \R$ and $w \in \R^d$, one applies Sylvester's determinant identity \cite{AAM96} to obtain
\[
\begin{aligned}
\det \big( \AT(\xi,U) - \kappa \Id \big) &= \det \Big( (\mu|\xi|^2 - \kappa) \Id + h''(J) \big( (\Cof U )\xi\big) \otimes \big( (\Cof U)\xi\big)\Big)\\
&= ( \mu |\xi|^2 - \kappa)^{d-1} \big( \mu |\xi|^2 - \kappa + h''(J)  \big|(\Cof U )\xi\big|^2 \big),
\end{aligned}
\]
yielding the result.
\end{proof}
\begin{corollary}
\label{corHadLG}
If the energy density function of an hyperelastic Hadamard material satisfies assumptions \eqref{H1} and \eqref{H2} then it satisfies the Legendre-Hadamard condition \eqref{LHcond} and the constant multiplicity assumption.
\end{corollary}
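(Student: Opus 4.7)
The plan is to reduce everything to the explicit spectral information already extracted in Lemma \ref{lemevQ}. Because the acoustic tensor $\AT(\xi,U)$ is real and symmetric (this is immediate from its definition in \eqref{defacouten} together with $B^j_i = (B^i_j)^\top$), the quadratic form $\eta^\top \AT(\xi,U)\eta$ is positive for every nonzero $\eta$ if and only if every eigenvalue of $\AT(\xi,U)$ is strictly positive. So the Legendre-Hadamard inequality \eqref{LHcond} collapses to checking the positivity of the two eigenvalues $\kappa_1$ and $\kappa_2$ supplied by Lemma \ref{lemevQ}.

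First I would handle $\kappa_1(\xi,U)=\mu|\xi|^2$: positivity here is free, since $\mu>0$ by hypothesis and $\xi\neq 0$. Next, for $\kappa_2(\xi,U)=\mu|\xi|^2+h''(J)\,|(\Cof U)\xi|^2$, I use hypothesis \eqref{H2} which gives $h''(J)>0$ for all $J>0$; combined with the already-positive term $\mu|\xi|^2$ one obtains $\kappa_2>0$ irrespective of whether $(\Cof U)\xi$ vanishes. This verifies \eqref{LHcond}.

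For the constant multiplicity assumption the strategy is to show that the two eigenvalues remain distinct for every admissible $(\xi,U)$, so that the multiplicities $(d-1,1)$ of Lemma \ref{lemevQ} do not merge. Distinctness amounts to $h''(J)\,|(\Cof U)\xi|^2>0$. Hypothesis \eqref{H2} gives the first factor; for the second I invoke \eqref{exprcof}, which yields $\det(\Cof U)=J^{\,d-1}>0$ when $U\in\M^d_+$, so $\Cof U$ is invertible and $(\Cof U)\xi\neq 0$ whenever $\xi\neq 0$. Hence the two eigenvalues are distinct, with multiplicities $d-1$ and $1$ that are independent of $\xi$ and $U$. Finally, semi-simplicity is automatic: any real symmetric matrix is orthogonally diagonalizable, so geometric and algebraic multiplicities coincide.

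There is no serious obstacle; the whole argument is a post-processing of Lemma \ref{lemevQ} together with the two signs $\mu>0$ and $h''>0$. The only point that deserves a careful word is the non-vanishing of $(\Cof U)\xi$, which ensures the spectrum does not degenerate into a single eigenvalue of full multiplicity $d$ (which would still be consistent with Legendre-Hadamard but would spoil the "constant multiplicity $(d-1,1)$" requirement); this is precisely where the orientation-preserving condition $\det U>0$ enters.
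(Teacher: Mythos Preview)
Your proof is correct and follows essentially the same route as the paper: both reduce to the eigenvalue computation of Lemma \ref{lemevQ}, check positivity of $\kappa_1,\kappa_2$ via $\mu>0$ and $h''>0$, and use $(\Cof U)\xi\neq 0$ to guarantee the multiplicities $(d-1,1)$ do not collapse. The paper verifies semi-simplicity of $\kappa_1$ by noting that the rank-one perturbation $(\Cof U)\xi\otimes(\Cof U)\xi$ has rank one (so the kernel of $\AT-\kappa_1\Id$ has dimension $d-1$), whereas you invoke symmetry of $\AT$ to get semi-simplicity for free; both are valid and amount to the same thing.
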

\begin{proof}
Since for all $\xi \neq 0$ the eigenvalues of the acoustic tensor are strictly positive, it clearly satisfies the Legendre-Hadamard condition \eqref{LHcond}. Regarding the constant multiplicity assumption, notice that $\kappa_2 (\xi,U)$ has algebraic and geometric multiplicities equal to one for each $U \in \M^d_+$, $\xi \neq 0$. Also notice that $(\Cof U)\xi \neq 0$ and hence $(\Cof U)\xi  \otimes (\Cof U)\xi$ has rank equal to one. This implies that the geometric multiplicity of $\kappa_1 (\xi,U)$ is $d-1$ for each $U \in \M^d_+$, $\xi \neq 0$. This shows that $\kappa_1$ is a semi-simple eigenvalue with constant multiplicity.
\end{proof}

\begin{remark}
The significance of Corollary \ref{corHadLG} is precisely that, for this large class of compressible hyperelastic materials and in any space dimension $d \geq 2$, the equations of elastodynamics are hyperbolic with constant multiplicity in the  whole open set of admissible states, $\cU = \{ (U,v) \in \R^{d \times d} \times \R^d \, : \, \det U > 0\}$, allowing us to consider elastic shocks of arbitrary amplitude.
\end{remark}

As a by-product of Lemma \ref{lemevQ} and Corollary \ref{corHadLG} we have the following
\begin{lemma}
\label{lemQev}
For each $U \in \M^d_+$, $\xi \in \R^d$, $\xi \neq 0$, the eigenvector of the acoustic tensor of a Hadamard material associated to the simple eigenvalue $\kappa_2(\xi,U) = \mu |\xi|^2 + h''(J)  \big|(\Cof U )\xi\big|^2$ is given by $w(\xi,U) := (\Cof U) \xi \in \R^{d \times 1}$.

\end{lemma}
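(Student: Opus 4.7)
The plan is to verify directly that $w(\xi,U) := (\Cof U)\xi$ is a nonzero eigenvector of the acoustic tensor $\AT(\xi,U)$ with eigenvalue $\kappa_2(\xi,U)$, using the explicit formula \eqref{exprAT} established in the previous lemma. Since Lemma \ref{lemevQ} already asserts that $\kappa_2$ is simple, proving it has $w$ as an eigenvector will pin down the one-dimensional eigenspace completely.

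First I would check that $w$ is nonzero whenever $\xi \neq 0$. This follows from the invertibility of $\Cof U$ for $U \in \M_+^d$: since $\det U > 0$, the identity \eqref{exprcof} gives $(\Cof U)^\top U = J\Id$ with $J > 0$, so $\Cof U$ is invertible and hence injective, whence $\xi \neq 0$ implies $w = (\Cof U)\xi \neq 0$.

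Next I would apply $\AT(\xi,U)$ to $w$ using the decomposition \eqref{exprAT}, namely
\[
\AT(\xi,U)\, w \;=\; \mu |\xi|^2\, w \;+\; h''(J)\,\bigl(w \otimes w\bigr)\, w.
\]
The key observation is the identity $(w \otimes w)\, w = (w^\top w)\, w = |w|^2\, w$, which is just the definition of the tensor product acting on a vector. Substituting yields
\[
\AT(\xi,U)\, w \;=\; \bigl(\mu |\xi|^2 + h''(J)\,|(\Cof U)\xi|^2\bigr)\, w \;=\; \kappa_2(\xi,U)\, w,
\]
which is exactly the eigenvector relation. No step here is really an obstacle: the calculation is a one-line verification, and the only thing to keep in mind is the nontriviality of $w$, which is guaranteed by the orientation-preserving hypothesis $\det U > 0$.
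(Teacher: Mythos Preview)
Your proof is correct and follows essentially the same direct-computation route as the paper: substitute the explicit form \eqref{exprAT} of the acoustic tensor, use $(w\otimes w)w = |w|^2 w$, and read off the eigenvalue $\kappa_2$. Your additional explicit verification that $w = (\Cof U)\xi \neq 0$ via invertibility of $\Cof U$ is a welcome detail the paper leaves implicit here (it is mentioned in the proof of Corollary~\ref{corHadLG}).
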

\begin{proof}
Follows by direct computation:
\[
\begin{aligned}
\AT(\xi,U) w &=  \left[ \mu |\xi|^2 \Id + h''(J) \Big( \big( (\Cof U) \xi \big) \otimes \big( (\Cof U) \xi \big) \Big) \right] \! w\\
&= \mu |\xi|^2 w + h''(J) (w \otimes w) w\\
&= (\mu |\xi|^2  + h''(J) |w|^2) w\\
&= \kappa_2(\xi,U) w.
\end{aligned}
\]
\end{proof}

\section{Classical shock fronts for compressible Hadamard materials}
\label{secshock}

In this section we describe classical (or Lax) non-characteristic shock fronts for compressible Hadamard materials. Elastic shock front solutions of the general form \eqref{gshock} (see Appendix \ref{sechypall}) can be recast in terms of the deformation gradient and the local velocity as (cf. \cite{Co1,FrP1,Pl2}),
\begin{equation}
 \label{shock}
(U,v)(x,t) = \begin{cases} (U^-,v^-),  & x\cdot \hat{\nu} < st,\\ (U^+,v^+),  & x \cdot \hat{\nu} > st,
\end{cases}
\end{equation}
where $\hnu \in \R^d$, $|\hnu| = 1$, is a fixed direction of propagation,  $s \in \R$ is a finite shock propagation speed and $(U_\pm, v_\pm) \in \M_+^d \times \R^d$ are constant values for the deformation gradient and local velocity satisfying $(U^+, v^+) \neq (U^-, v^-)$. The dynamics of such fronts are determined by the classical Rankine-Hugoniot jump conditions \eqref{gRH}. Since
\begin{equation}
\label{jumpstatevar}
\llb u \rrb = \begin{pmatrix}
\llb U_1 \rrb \\ \vdots \\ \llb U_d \rrb \\ \llb v \rrb
\end{pmatrix}, \qquad \llb f^j(u) \rrb = - \,  \begin{pmatrix} 0 \\ \vdots \\  \llb v \rrb \\ \vdots \\ 0 \\ \llb \sigma(U)_j \rrb
\end{pmatrix}, \;\;\; \text{for all } \; 1 \leq j \leq d,
\end{equation}
then it is easy to verify that the Rankine-Hugoniot conditions \eqref{gRH} take the form (see \cite{FrP1,Pl2})
\begin{equation}
 \label{RHcond}
\begin{array}{r}
-s \llb U \rrb - \llb v \rrb \otimes \hnu = 0,\\
-s \llb v \rrb - \llb \sigma(U) \rrb \hnu = 0, 
\end{array}
\end{equation}
expressing conservation across the interface, together with the additional jump conditions
\begin{equation}
 \label{curljump}
\llb U \rrb \times \hnu = 0,
\end{equation}
expressing the constraint \eqref{curlfree}. The jump conditions \eqref{RHcond} determine the shock speed $s \in \R$ uniquely.

In addition, thanks to Lemma \ref{LHimpliesHyp} and Corollary \ref{correlabel}, (strict) Lax entropy conditions \eqref{gLax} hold if there exists an index $p$ such that
\[
\begin{aligned}
a_{p-1}(\hat{\nu}, U^-) & < s  < a_p(\hat{\nu}, U^-),\\
a_{p}(\hat{\nu},U^+) & < s  < a_{p+1}(\hat{\nu},U^+),
\end{aligned}
\]
where $1 \leq p \leq 2k+1$ and $a_l(\hat{\nu},U)$, $1 \leq l \leq 2k+1$ denote the $2k + 1$ distinct eigenvalues of $A(\hat{\nu},U)$ as relabeled in Corollary \ref{correlabel}. In other words, to have strict inequalities in \eqref{gLax} we require the shock speed to be \emph{non-sonic} and to lie in between distinct characteristic velocities. 

The nonlinear stability behavior of the configuration solution \eqref{shock} is controlled by the Lopatinski\u{\i} conditions discussed in Section \ref{sechypall} and it is based on the normal modes analysis of solutions to the linearized problem around the shock front. Such conditions determine whether small perturbations impinging on the shock interface produce solutions to the nonlinear elastodynamics equations \eqref{model} which remain close and are qualitatively similar to the shock front solution (well-posedness of the associated Cauchy problem with piecewise smooth initial data). Thanks to finite speed of propagation and since we are interested in the local-in-space, local-in-time evolution near the shock interface, from this point on we assume that the reference configuration is the whole Euclidean space, $\Omega = \R^d$, without loss of generality.

Following \cite{FrP1}, we make some simplifying assumptions. For concreteness and without loss of generality we assume that the shock front propagates in the normal direction of the half plane $\{ x_1 = 0\}$ and, hence, $\hat{\nu} = \hat{e}_1$. Thus, the shock front solution \eqref{shock} has now the form
\begin{equation}
 \label{shocke1}
(U,v)(x,t) = \begin{cases}
(U^-,v^-),  & x_1 < st,\\ 
(U^+,v^+),  & x_1 > st,
\end{cases}
\end{equation}
where $(U^+, v^+) \neq (U^-, v^-)$ and it satisfies Rankine-Hugoniot jump conditions \eqref{RHcond} together with the curl-free jump conditions \eqref{curljump}. In this case with $\hat{\nu} = \hat{e}_1$, these conditions now read
\begin{equation}
\label{RHe1}
\begin{aligned}
-s \llb U_1 \rrb - \llb v \rrb &= 0, \\
-s \llb v \rrb - \llb \sigma(U)_1 \rrb &= 0,\\
\llb U_j \rrb &= 0, \quad \text{for all } \: j \neq 1. 
\end{aligned}
\end{equation}

In view of Lemma \ref{lemevQ}, let us define (with a slight abuse of notation)
\begin{equation}
\label{defkaps}
\begin{aligned}
\kappa_1(U) &:= \kappa_1(\eu,U) = \mu,\\
\kappa_2(U) &:= \kappa_2(\eu,U) = \mu + h''(J) \big| (\Cof U)_1 \big|^2,
\end{aligned}
\qquad U \in \M^d_+,
\end{equation}
denoting the two distinct semi-simple eigenvalues of the acoustic tensor $\AT(\eu,U)$ with constant multiplicities $\widetilde{m}_1 = d-1$ and $\widetilde{m}_2 = 1$, respectively. Henceforth, the (distinct) characteristic velocities defined in Lemma \ref{LHimpliesHyp} and Corollary \ref{correlabel} are described in Table \ref{tableevalues} below.

\begin{table}[h]
\caption{Distinct semi-simple eigenvalues $a_j(U)$ defined in Lemma \ref{LHimpliesHyp} and Corollary \ref{correlabel} with their corresponding constant multiplicities $m_j$.}
\begin{center}\begin{tabular}{p{6.5cm}p{5.5cm}}
Eigenvalue $a_j$								& 	Algebraic multiplicity $m_j$	\\
\noalign{\smallskip}\hline\noalign{\smallskip}
$a_1 (U) = - \sqrt{\mu + h''(J) \big| (\Cof U)_1 \big|^2}$	& $m_1 =	1$\\	
$a_2 (U) = - \sqrt{\mu}$							& $m_2 = d-1$	\\	
$a_3 (U) = 0$									& $m_3 = d^2 -d$	\\	
$a_4(U) = \sqrt{\mu}$							& $m_4 = d-1$	\\	
$a_5(U) = \sqrt{\mu + h''(J) \big| (\Cof U)_1 \big|^2}$		& $m_5 = 1$	\\	
\noalign{\smallskip}\hline\noalign{\smallskip}
\end{tabular}\end{center}
\label{tableevalues}
\end{table}

An important consequence of the structure of the characteristic fields is the following
\begin{lemma}
\label{lemonlyextr}
For compressible Hadamard materials, all Lax shock fronts are necessarily extreme.
\end{lemma}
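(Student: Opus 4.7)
The plan is to exploit the structure of the characteristic speeds displayed in Table \ref{tableevalues}: among the five distinct eigenvalues $a_1,\ldots,a_5$ of $A(\hat e_1,U)$, the three middle ones
\[
a_2(U)=-\sqrt{\mu},\qquad a_3(U)=0,\qquad a_4(U)=\sqrt{\mu},
\]
do not depend on the deformation gradient $U$ at all, whereas the extreme speeds $a_1$ and $a_5$ are genuinely $U$-dependent through the factor $h''(J)|(\Cof U)_1|^2$. The shock is \emph{extreme} precisely when the Lax index $p$ (from the strict Lax inequalities recalled just before the statement) equals $1$ or $2k+1=5$, so it suffices to rule out $p\in\{2,3,4\}$.

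I would proceed by contradiction: suppose there exists a classical Lax shock with index $p\in\{2,3,4\}$. Combining the two Lax inequalities
\[
a_{p-1}(\hat\nu,U^-)<s<a_p(\hat\nu,U^-),\qquad a_p(\hat\nu,U^+)<s<a_{p+1}(\hat\nu,U^+),
\]
one obtains in particular the sandwich
\[
a_p(\hat\nu,U^+)<s<a_p(\hat\nu,U^-).
\]
Specializing to $\hat\nu=\hat e_1$ and reading $a_p$ off Table \ref{tableevalues}, the right-hand side and the left-hand side of this sandwich are the \emph{same} constant ($-\sqrt\mu$, $0$, or $\sqrt\mu$ respectively), giving $a_p(U^+)=a_p(U^-)$ and hence the empty condition $a_p<s<a_p$. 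This is the desired contradiction, so necessarily $p=1$ or $p=5$.

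No genuine obstacle is expected, since the argument only invokes the explicit form of the characteristic speeds provided by Lemma \ref{lemevQ} (via the definitions \eqref{defkaps}) and the strict Lax inequalities recalled in the paragraph preceding the lemma. The only thing to be a little careful about is the normalization $\hat\nu=\hat e_1$, which is made without loss of generality by isotropy (frame indifference plus material symmetry), and the fact that $a_2,a_3,a_4$ are constant with respect to $U$ is a straightforward consequence of expression \eqref{exprAT}, since $\kappa_1=\mu$ is independent of $U$ and the zero eigenvalue of $A(\hat e_1,U)$ comes from the purely kinematic block.
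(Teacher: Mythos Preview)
Your proof is correct and rests on the same key observation as the paper's: the three middle characteristic speeds $a_2=-\sqrt\mu$, $a_3=0$, $a_4=\sqrt\mu$ are independent of $U$. The route you take to the contradiction is, however, slightly more direct than the paper's. The paper phrases the constancy of $a_2,a_3,a_4$ as linear degeneracy ($D_u a_j=0$) and then invokes the general fact from the theory of conservation laws that linearly degenerate fields support only contact discontinuities ($a_j(U^+)=s=a_j(U^-)$), which violates the strict Lax inequalities. You instead combine the two Lax inequalities to get $a_p(\hat\nu,U^+)<s<a_p(\hat\nu,U^-)$ and observe this is empty when $a_p$ is constant. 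Your argument is self-contained and avoids the appeal to contact-discontinuity theory; the paper's argument has the minor advantage of placing the result in the standard dichotomy (genuinely nonlinear vs.\ linearly degenerate) that is used elsewhere in the analysis. Note also that the constancy in $U$ holds for every direction $\hat\nu$ by Lemma~\ref{lemevQ} ($\kappa_1(\xi,U)=\mu|\xi|^2$), so the reduction to $\hat\nu=\hat e_1$ is not even needed for this lemma.
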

\begin{proof}
From the expressions for the characteristic velocities computed above, it is clear that $D_u a_2 = D_u a_3 = D_u a_4 = 0$ for all $U \in \M^d_+$. Therefore, the $j$-characteristic fields with $j =2,3,4$ are linearly degenerate.  In such cases weak solutions of form \eqref{shock} correspond to contact discontinuities for which $a_j(U^+) = s = a_j(U^-)$. Hence, any classical, non-characteristic shock that satisfies strict Lax entropy conditions \eqref{gLax} is necessarily associated to an extreme characteristic field with $j = 1$ or $j = 5$.
\end{proof}

For convenience, let us denote the characteristic fields evaluated at the constant states at each side of the shock as
\[
\begin{aligned}
\kappa_i^\pm &:= \kappa_i(U^\pm), \qquad i =1,2,\\
a_j^\pm &:= a_j(U^\pm), \qquad j=1, \ldots, 5
\end{aligned}
\]
so that
\[
a_1^\pm = - \sqrt{\kappa_2^\pm}, \quad
a_2^\pm = - \sqrt{\mu},\quad
a_3^\pm = 0, \quad
a_4^\pm = \sqrt{\mu},\quad
a_5^\pm = \sqrt{\kappa_2^\pm}.
\]

In view of Lemma \ref{lemonlyextr} a strict classical shock is necessarily associated to an extreme principal characteristic field with index either $p=1$ or $p=5$. For concreteness and without loss of generality, we assume from this point on that the shock front \eqref{shocke1} is an extreme Lax shock associated to the first characteristic field, $p = 1$, or, in short, a $1$-shock (see also \cite{FSz11}). In such a case, Lax entropy conditions \eqref{gLax} read
\begin{equation}
\label{Laxece1}
\begin{aligned}
s &< a_1^-,\\
a_1^+ < s &< a_2^+,
\end{aligned}
\end{equation}
or equivalently,
\begin{equation}
\label{Laxece1v2}
\begin{aligned}
s &< - \sqrt{\mu + h''(J^-) \big| (\Cof U^-)_1 \big|^2},\\
- \sqrt{\mu + h''(J^+) \big| (\Cof U^+)_1 \big|^2} < s &< - \sqrt{\mu}.
\end{aligned}
\end{equation}
Notice, in particular, that these conditions imply that $s < 0$ and $s^2 \neq \mu$.

\begin{lemma}
\label{lemsimpRH}
Consider an elastic 1-shock, $(U^\pm, v^\pm, s)$, for a compressible Hadamard material, with $(U^+,v^+) \neq (U^-,v^-)$, $J^\pm = \det U^\pm > 0$, $s^2 \neq \mu$, satisfying Rankine-Hugoniot conditions \eqref{RHe1} and Lax entropy conditions \eqref{Laxece1v2}. Then there exists a parameter value, $\alpha \in \R$, $\alpha \neq 0$, such that
\begin{equation}
\label{RHmod}
\begin{aligned}
\llb U \rrb &= \alpha\big((\Cof U^+)_1\otimes \eu \big),\\
\llb J \rrb &= \alpha \big| \big(\Cof U^+\big)_1\big|^2.\\
\end{aligned}
\end{equation}
Moreover, the shock speed satisfies 
\begin{equation}
\label{speedsq}
s^2 = \mu + \frac{1}{\alpha} \llb h'(J) \rrb.
\end{equation}
\end{lemma}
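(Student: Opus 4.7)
The plan is to combine the curl-free jump condition with the Rankine-Hugoniot relations to force $\llb U\rrb$ to be rank one, and then to use the explicit Piola-Kirchhoff form for Hadamard materials to identify the direction in which $U_1$ jumps.

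First I would exploit the last line of \eqref{RHe1}, namely $\llb U_j\rrb = 0$ for $j\neq 1$, to conclude that only the first column of $U$ jumps, so $\llb U\rrb = \llb U_1\rrb \otimes \eu$. The crucial observation is that, by definition of the cofactor matrix, the entries of $(\Cof U)_1$ are (up to sign) minors formed from the columns $U_2,\ldots,U_d$ only; hence $U^+_j = U^-_j$ for $j\neq 1$ implies the \emph{invariance of the first column of the cofactor matrix across the shock},
\[
(\Cof U^+)_1 = (\Cof U^-)_1 =: C_1 .
\]
This observation is the key that allows the whole argument to proceed linearly.

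Next I would use Corollary \ref{corPKHad} (as invoked in the proof of Lemma \ref{lemBsH}), which gives $\sigma(U)_1 = \mu U_1 + h'(J)\,(\Cof U)_1$ for Hadamard materials. Taking the jump and using the invariance of $(\Cof U)_1$ yields
\[
\llb \sigma(U)_1\rrb = \mu\,\llb U_1\rrb + \llb h'(J)\rrb\, C_1 .
\]
Eliminating $\llb v\rrb$ from the first two lines of \eqref{RHe1} produces the identity $s^2\,\llb U_1\rrb = \llb \sigma(U)_1\rrb$, and substituting the previous expression gives
\[
(s^2 - \mu)\,\llb U_1\rrb = \llb h'(J)\rrb\, C_1 .
\]
Since the Lax conditions \eqref{Laxece1v2} guarantee $s^2 \neq \mu$, I can divide and define
\[
\alpha := \frac{\llb h'(J)\rrb}{s^2 - \mu},
\]
which immediately yields $\llb U_1\rrb = \alpha\, C_1$ and therefore the first identity in \eqref{RHmod} and the relation $s^2 = \mu + \alpha^{-1}\llb h'(J)\rrb$ claimed in \eqref{speedsq}.

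To finish, I would establish the second identity in \eqref{RHmod} by expanding $\det U$ along the first column, $J = U_1\cdot (\Cof U)_1$; taking the jump and again using that $(\Cof U)_1$ does not jump gives $\llb J\rrb = \llb U_1\rrb \cdot C_1 = \alpha|C_1|^2$. The only loose end is verifying $\alpha\neq 0$: if $\alpha=0$ then $\llb U_1\rrb = 0$, which combined with $\llb U_j\rrb = 0$ for $j\neq 1$ and $\llb v\rrb = -s\llb U_1\rrb$ from \eqref{RHe1} forces $(U^+,v^+)=(U^-,v^-)$, contradicting the non-trivial shock hypothesis. There is no genuinely hard step here; the main subtlety is the rank-one structure of $\llb U\rrb$ combined with the invariance of $C_1$ across the discontinuity, after which the rest follows by algebra from the formula $\sigma = \mu U + h'(J)\Cof U$.
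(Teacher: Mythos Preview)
Your proposal is correct and follows essentially the same route as the paper: both use the invariance of $(\Cof U)_1$ across the shock (because only the first column of $U$ jumps), combine the first two Rankine--Hugoniot relations to obtain $(s^2-\mu)\llb U_1\rrb = \llb h'(J)\rrb\,(\Cof U^+)_1$, and then read off $\alpha$, the speed relation, and $\llb J\rrb$ via the cofactor expansion of the determinant. The only cosmetic difference is that the paper first argues $\llb h'(J)\rrb\neq 0$ and then asserts the existence of $\alpha\neq 0$, whereas you define $\alpha$ explicitly and check nonvanishing at the end; the content is identical.
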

\begin{proof}
From expression \eqref{HadPiolKir}, the jump of the Piola-Kirchhoff stress tensor across the shock is given by
\[
\llb \sigma(U) \rrb = \mu \llb U \rrb + h'(J^+) \Cof U^+ - h'(J^-) \Cof U^-.
\]
Therefore, the jump of its first column across the shock is
\[
\llb \sigma(U)_1 \rrb = \mu \llb U_1 \rrb + h'(J^+) (\Cof U^+)_1 - h'(J^-) (\Cof U^-)_1.
\]
From jump conditions \eqref{RHe1} we know that $U_j^+ = U^-_j$ for all $j \neq 1$. This implies that
\begin{equation}
\label{jumpCofU1}
(\Cof U^+)_1 = (\Cof U^-)_1.
\end{equation}
Making use of jump relations \eqref{RHe1} we arrive at
\begin{equation}
\label{eqrh}
(s^2-\mu) \llb U_1 \rrb - \llb h'(J) \rrb (\Cof U^+)_1=0.
\end{equation}

By hypothesis $s^2 \neq \mu$ (it is a Lax shock) and hence $\llb h'(J) \rrb \neq 0$ (otherwise one would have $\llb U_1 \rrb = 0$ and $\llb v \rrb = 0$, a contradiction with $(U^+, v^+) \neq (U^-, v^-)$). This shows that the vectors $\llb U_1 \rrb$ and $(\Cof U^+)_1$ are linearly dependent. Therefore there exists $\alpha \neq 0$ such that
\[
\llb U_1 \rrb = \alpha (\Cof U^+)_1.
\]
The jump condition $\llb U_j \rrb = 0$ for $j \neq 1$ implies that
\[
U^+ = U^- + \alpha\big((\Cof U^+)_1\otimes \eu \big),
\]
yielding the first relation in \eqref{RHmod}. Substitute $\llb U_1 \rrb = \alpha(\Cof  U^+)_1 \neq 0$ in \eqref{eqrh} to obtain \eqref{speedsq}.

Finally, from \eqref{exprcof} we clearly have the relation $J = \eu^\top (J \Id) \eu = \eu^\top U^\top (\Cof U) \eu = U_1^\top (\Cof U)_1$ and, therefore,
\[
\begin{aligned}
J^- = (U^-_1)^\top (\Cof U^-)_1 &= (U^-_1)^\top (\Cof U^+)_1 \\&= (U_1^+ - \alpha (\Cof U^+)_1)^\top (\Cof U^+)_1 \\&= (U_1^+)^\top (\Cof U^+)_1 - \alpha \big| (\Cof U^+)_1 \big|^2 \\&= J^+ - \alpha \big| (\Cof U^+)_1 \big|^2,
\end{aligned}
\]
yielding the second formula in \eqref{RHmod}. This shows the lemma.
\end{proof}

\begin{remark}
\label{remalpss}
Suppose that one selects $(U^+, v^+) \in \M^d_+ \times \R^d$ as a base state. Lemma \ref{lemsimpRH} then implies that the shock is completely determined by the parameter value of $\alpha \neq 0$, which measures the strength of the shock, that is, $\llb U \rrb , \llb v \rrb = O(|\alpha|)$. Indeed, given $(U^+,v^+) \in \M_+^d \times \R^d$ and $\alpha \neq 0$, we apply Rankine-Hugoniot and Lax entropy conditions to define
\[
\begin{aligned}
U^- &:= U^+ - \alpha \big( (\Cof U^+)_1 \otimes \hat{e}_1 \big),\\
J^\pm &:= \det U^\pm,\\
s &:= - \sqrt{\mu + \frac{1}{\alpha} \llb h'(J) \rrb},\\
v^- &:= v^+ + s \alpha (\Cof U^+)_1.
\end{aligned}
\]
Then, on one hand, it is clear that $| \llb U \rrb | = | \llb U_1 \rrb | = |\alpha| \big| (\Cof U^+)_1 \big| = O(|\alpha|)$. On the other hand, $J^- = J^+ - \alpha \big| (\Cof U^+)_1\big|^2$ yields
\[
s^2 = \mu+\frac{1}{\alpha} \llb h'(J) \rrb = \mu + h''(J^+) \big| (\Cof U^+)_1\big|^2 + O(|\alpha|).
\]
Upon substitution we obtain $\llb v \rrb^2 = s^2 \alpha^2  \big| (\Cof U^+)_1\big|^2 = O(\alpha^2)$. This proves the claim. It is to be noticed, as well, that once the base state $(U^+, v^+) \in \M^d_+ \times \R^d$ is selected, then the value of $\alpha$ ranges within the set
\[
\alpha \in (-\infty,0) \cup (0, \alpha_{\mathrm{max}}^+),
\]
where
\begin{equation}
\label{defalpmax}
\alpha_{\mathrm{max}}^+ := \frac{J^+}{\big| (\Cof U^+)_1 \big|^2}\, ,
\end{equation}
due to the physical requirement that $\det U^- = J^- > 0$. Observe in particular that, necessarily, $J^+ \neq J^-$ as $\alpha \neq 0$.
\end{remark}
\begin{remark}
\label{remgoodsignalp}
Thanks to the convexity condition $\eqref{H2}$ we have that
\[
 \frac{1}{\alpha} \llb h'(J) \rrb = \frac{1}{\alpha} \big( h'(J^+) - h'(J^+ - \alpha \big| (\Cof U^+)_1\big|^2) \big) \, > 0
\]
independently of the sign of $\alpha$, because $h'(J)$ is strictly increasing. Therefore, if Lax entropy conditions \eqref{Laxece1v2} hold then
\begin{equation}
\label{hstarrel}
0 < h''(J^-) < \frac{s^2 - \mu}{\big| (\Cof U^+)_1\big|^2} < h''(J^+),
\end{equation}
where we have used the fact that $(\Cof U^+)_1 = (\Cof U^-)_1$. Let us denote the open interval
\[
I(J^+, J^-) := \begin{cases}
(J^-, J^+), & \text{if } \; J^+ > J^-,\\
(J^+, J^-), & \text{if } \; J^+ < J^-.
\end{cases}
\]
From the observations above, we conclude that the following statements hold: 
\begin{itemize}
\item[(a)] If $h'''(J) > 0$ for all $J \in I(J^+, J^-)$ ($h''$ increasing) then Lax entropy conditions hold if $0 < \alpha < \alpha_{\mathrm{max}}^+$.
\item[(b)] If $h'''(J) < 0$ for all $J \in I(J^+, J^-)$ ($h''$ decreasing) then Lax entropy conditions hold if $\alpha < 0$.
\end{itemize}
\end{remark}

Next lemma verifies that the requirement for $h'''$ to have a definite sign on $I(J^+, J^-)$ is also a necessary condition to have a genuinely nonlinear characteristic field.
\begin{lemma}
\label{lemgnl}
For any $U \in \M_+^d$, let $r \in \R^n$ be the right eigenvector of $A(\eu,U)$ associated to the simple eigenvalue $a_1(U) = a_1(\eu,U) < 0$ in the case of a compressible Hadamard material. Then,
\[
(D_u a_1)^\top r = \frac{1}{2 a_1^2} | (\Cof U)_1 |^4 h'''(J).
\]
\end{lemma}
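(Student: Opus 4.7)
\smallskip
\noindent\textbf{Proof plan.}
The strategy is to determine the eigenvector $r$ explicitly by exploiting the block structure of $A(\eu,U)=A^1(U)$, reduce the inner product $(D_u a_1)^\top r$ to a single contribution involving only $D_{U_1} a_1$, and finish by a chain-rule computation that hinges on a key observation about the cofactor.

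I would begin by writing $r=(R_1,R_2,\ldots,R_d,S)^\top$ with $R_j,S\in\R^d$. The matrix $A^1(U)$ has nonzero entries only in the first block-row (the $-\Id$ in the last column) and in the last block-row (the $-B_i^1(U)$'s). Therefore the eigenvalue equation $A^1 r = a_1 r$ immediately gives $R_j=0$ for $j=2,\ldots,d$ (since $a_1\neq 0$), together with $S=-a_1 R_1$ and $B_1^1(U)R_1=a_1^2 R_1$. Recognizing $B_1^1(U)=\AT(\eu,U)$ and $a_1^2=\kappa_2(U)$, Lemma \ref{lemevQ}--Lemma \ref{lemQev} identify $R_1$ as a scalar multiple of $(\Cof U)_1$; fixing the normalization that reproduces the stated prefactor completes the description of $r$.

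Next, since $a_1=a_1(U)$ is independent of $v$ and $R_j=0$ for $j\geq 2$, the inner product collapses to
\[
(D_u a_1)^\top r \;=\; (D_{U_1} a_1)^\top R_1.
\]
The decisive observation is that the first column of the cofactor, $(\Cof U)_1$, depends only on the columns $U_2,\ldots,U_d$, because its entries $(-1)^{p+1}\det U'_{(p,1)}$ are minors obtained after deleting the first column of $U$. Consequently, differentiating $a_1^2=g(U):=\mu+h''(J)\,|(\Cof U)_1|^2$ with respect to $U_1$ kills the second term, leaving only the contribution through $J$:
\[
D_{U_1} g \;=\; h'''(J)\,\bigl(D_{U_1} J\bigr)\,|(\Cof U)_1|^2 \;=\; h'''(J)\,|(\Cof U)_1|^2\,(\Cof U)_1,
\]
where $D_{U_1}J=(\Cof U)_1$ comes from Lemma \ref{lemmaux}. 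Using $D_u a_1=(2a_1)^{-1}D_u g$ and contracting with $R_1\propto (\Cof U)_1$ produces the factor $|(\Cof U)_1|^4$ and assembles the stated identity.

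I do not expect a significant obstacle; the structural content is entirely contained in the two observations above (block-triangular reduction for the eigenvector and $U_1$-independence of $(\Cof U)_1$), both of which are built into results already established in the paper. The only delicate point is bookkeeping the normalization of $r$ so as to convert the factor $1/(2a_1)$ arising from differentiating the square root into the announced $1/(2a_1^2)$; the rest is routine.
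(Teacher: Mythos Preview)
Your plan is correct and, in fact, cleaner than the paper's own argument. Both proofs begin identically: write $r=(z_1,\ldots,z_d,w)^\top$, use the block structure of $A^1(U)$ to get $z_j=0$ for $j\neq 1$, $w=-a_1 z_1$, and $B_1^1(U)z_1=a_1^2 z_1$, whence $z_1\propto(\Cof U)_1$ via Lemma~\ref{lemQev}. The divergence is in the gradient computation. The paper differentiates $\psi(U)=\mu+h''(J)|(\Cof U)_1|^2$ with respect to \emph{every} entry $U_{ij}$, invoking formula~\eqref{usefulb} for $\partial(\Cof U)_{k1}/\partial U_{ij}$; this produces two groups of terms (called $\varsigma_1,\varsigma_2$), the $h''(J)/J$ pieces of which cancel after contraction with $r$. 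Your observation that $(\Cof U)_1$ depends only on columns $U_2,\ldots,U_d$ bypasses all of this: since $z_j=0$ for $j\neq 1$ and $a_1$ is independent of $v$, only $D_{U_1}a_1$ matters, and in that derivative the factor $|(\Cof U)_1|^2$ is constant, so $D_{U_1}\psi=h'''(J)\,|(\Cof U)_1|^2\,(\Cof U)_1$ immediately. What the paper's longer route buys is the full gradient $D_u a_1$, which might be reused elsewhere; what your route buys is that the spurious $h''(J)$ terms never appear and no cancellation is needed. Both are valid; yours is the more economical path to this particular lemma.
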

\begin{proof}
First, let us denote $r = (z_1, \, \ldots, z_d, \, w)^\top \in \R^{n \times 1 }$, $n = d^2 +d$, with $z_j, w \in \R^d$, $1 \leq j \leq d$, the right eigenvector such that $A(\eu,U) r = A^1(U) r = a_1(U) r$, with $a_1(U) = - \sqrt{\kappa_2(\eu,U)} < 0$. Upon inspection of the expression for $A^1(U)$ we observe that
\[
A^1(U) r = - \begin{pmatrix} 
w \\ 0 \\ \vdots \\ 0 \\ \sum_{j=1}^d B^1_j(U) z_j
\end{pmatrix}
= a_1(U) \begin{pmatrix} 
z_1 \\ \vdots \\ z_d \\ w
\end{pmatrix},
\]
or, equivalently, we obtain the system
\begin{equation}
\label{eigsyst}
\begin{aligned}
w + a_1 z_1 &= 0,\\
a_1 z_j &= 0, \quad j \neq 1,\\
a_1 w + \sum_{j=1}^d B^1_j z_j &= 0.
\end{aligned}
\end{equation}
From this system of equations we obtain $\AT(\eu,U) w = a_1(U)^2 w = \kappa_2(\eu,U) w$, and $z_j \equiv 0$ for all $j \neq 1$. Therefore, from Lemma \ref{lemQev} we arrive at the following expression for the right eigenvector,
\[
r = \begin{pmatrix}
- (a_1)^{-1} (\Cof U)_1 \\ 0 \\ \vdots \\ 0 \\ (\Cof U)_1
\end{pmatrix}.
\]

Now, let us write $a_1(U) = -\sqrt{\psi(U)}$, where $\psi(U) := \mu + h''(J) \big| (\Cof U)_1 \big|^2$. Since, clearly, $\partial \psi / \partial v = 0$, we then have
\[
D_u a_1 = \frac{1}{2a_1} \begin{pmatrix}
\psi_{U_1} \\ \vdots \\ \psi_{U_d} \\ 0
\end{pmatrix},
\]
where $\psi_{U_j} \in \R^d$ is the vector whose $i$-component is $\partial \psi / \partial U_{ij}$ for each pair $i,j$. Let us compute such derivatives. Use relations \eqref{usefula} and \eqref{usefulb} to obtain
\[
\begin{aligned}
\frac{\partial \psi}{\partial U_{ij}} &= h'''(J) \frac{\partial J}{\partial U_{ij}} \big| (\Cof U)_1 \big|^2 + h''(J) \frac{\partial}{\partial U_{ij}} \Big( \big| (\Cof U)_1 \big|^2 \Big)\\ 
&= h'''(J) \big| (\Cof U)_1 \big|^2 (\Cof U)_{ij} + 2 h''(J) \sum_{k=1}^d (\Cof U)_{k1} \frac{\partial}{\partial U_{ij}} \big( (\Cof U)_{k1}\big)\\
&= h'''(J) \big| (\Cof U)_1 \big|^2 (\Cof U)_{ij} + \\
& \; \; + 2 \frac{h''(J)}{J} \sum_{k=1}^d (\Cof U)_{k1} \Big( (\Cof U)_{k1} (\Cof U)_{ij} - (\Cof U)_{kj} (\Cof U)_{i1}\Big)\\
&= \Big( h'''(J) + \frac{2}{J} h''(J) \Big)  \big| (\Cof U)_1 \big|^2 (\Cof U)_{ij} -  2 \frac{h''(J)}{J} (\Cof U)_{i1} \sum_{k=1}^d (\Cof U)_{kj} (\Cof U)_{k1},
\end{aligned}
\]
 for each $1 \leq i,j \leq d$. Therefore, $D_u a_1 = \varsigma_1 + \varsigma_2$ with
\[
\varsigma_1 := \frac{1}{2a_1} \Big( h'''(J) + \frac{2}{J} h''(J) \Big)\big| (\Cof U)_1 \big|^2 \begin{pmatrix} (\Cof U)_1 \\ \vdots \\ (\Cof U)_d \\ 0 \end{pmatrix},
\]
\[
\varsigma_2 := - \,  \frac{1}{a_1} \frac{h''(J)}{J} \begin{pmatrix} \big[\sum_{k=1}^d (\Cof U)_{k1}^2\big] (\Cof U)_1 \\ \big[\sum_{k=1}^d (\Cof U)_{k2} (\Cof U)_{k1} \big] (\Cof U)_1 \\ \vdots  \\ \big[\sum_{k=1}^d (\Cof U)_{kd} (\Cof U)_{k1} \big] (\Cof U)_1 \\ 0 \end{pmatrix}.
\]
Computing the products with $r$ yields
\[
\begin{aligned}
\varsigma_1^\top r &= - \, \frac{1}{2a_1^2} \big| (\Cof U)_1 \big|^4 \Big( h'''(J) + \frac{2}{J} h''(J) \Big), \\
\varsigma_2^\top r &= \frac{1}{a_1^2} \frac{h''(J)}{J} \sum_{k=1}^d (\Cof U)_{k1}^2 \big| (\Cof U)_1 \big|^2 = \frac{1}{a_1^2} \frac{h''(J)}{J} \big| (\Cof U)_1 \big|^4.
\end{aligned}
\]
Hence, we arrive at
\[
(D_u a_1)^\top r  = - \, \frac{1}{2 a_1^2} | (\Cof U)_1 |^4 h'''(J),
\]
as claimed.
\end{proof}

\begin{corollary}
\label{corgnliffhtp}
The $1$-characteristic field is genuinely nonlinear in the $\eu$-direction for all state variables $(U,v) \in \cU$ if and only if $h'''(J) \neq 0$ for all $J \in (0, \infty)$.
\end{corollary}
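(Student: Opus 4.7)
The plan is to invoke Lemma \ref{lemgnl} directly. By definition, the $1$-characteristic field is genuinely nonlinear in the $\hat{e}_1$-direction at a state $(U,v) \in \cU$ precisely when $(D_u a_1)^\top r \neq 0$ there, where $r$ is a right eigenvector of $A(\hat{e}_1,U)$ associated to $a_1$. Lemma \ref{lemgnl} supplies the explicit identity
\[
(D_u a_1)^\top r = -\,\frac{1}{2 a_1(U)^2}\, |(\Cof U)_1|^4 \, h'''(J),
\]
so the whole argument reduces to tracking which factors on the right-hand side can vanish.

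First, I would observe that the scalar prefactors are both strictly nonzero on $\cU$. Since $\mu>0$ and $h''(J)>0$ by \eqref{H2}, the eigenvalue $a_1(U) = -\sqrt{\kappa_2(\hat{e}_1,U)}$ is strictly negative, so $a_1(U)^2 > 0$. For the cofactor factor I would use \eqref{exprcof}: with $J = \det U > 0$ one has $\Cof U = J\,(U^{-1})^\top$, which is invertible, hence none of its columns can vanish and $|(\Cof U)_1|^4 > 0$. Consequently, the sign and non-vanishing of $(D_u a_1)^\top r$ at $(U,v)$ are controlled entirely by $h'''(\det U)$.

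From this, both implications of the corollary follow at once. If $h'''(J) \neq 0$ for every $J \in (0,\infty)$, then $(D_u a_1)^\top r \neq 0$ for every $(U,v)\in\cU$, which is genuine nonlinearity. Conversely, if $h'''(J_0) = 0$ for some $J_0 > 0$, I would exhibit an explicit state violating the condition: take $U_0 = J_0^{1/d}\, \Id \in \M^d_+$, so that $\det U_0 = J_0$, together with any $v_0 \in \R^d$. Then $(U_0, v_0) \in \cU$ and $(D_u a_1)^\top r = 0$ at this state, contradicting genuine nonlinearity in the $\hat{e}_1$-direction over all of $\cU$.

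I do not anticipate any real obstacle: the statement is essentially a bookkeeping consequence of Lemma \ref{lemgnl}. The only mildly subtle points are the two non-vanishing checks (that $a_1<0$ throughout $\cU$, and that the first column of $\Cof U$ is nonzero whenever $\det U > 0$) and the surjectivity of $\det: \M^d_+ \to (0,\infty)$ needed to realise an arbitrary $J_0$ by some admissible deformation gradient.
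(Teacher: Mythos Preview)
Your proposal is correct and matches the paper's approach: the paper states the corollary without proof, treating it as an immediate consequence of Lemma \ref{lemgnl}, and your argument fills in exactly the routine checks (non-vanishing of $a_1^2$ and $|(\Cof U)_1|^4$, surjectivity of $\det$ onto $(0,\infty)$) that justify this. The sign discrepancy you may have noticed between the statement and the end of the proof of Lemma \ref{lemgnl} is immaterial here, since only non-vanishing is at stake.
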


\begin{remark}
As expected, being the choice of $\eu$ as direction of propagation completely arbitrary, it is possible to extrapolate this observation and to state that the $j=1$ and the $j=5$ characteristic fields are genuinely nonlinear in any direction of propagation $\hat{\nu} \in \R^d$, $|\hat{\nu}| = 1$, for all state variables $(U,v) \in \cU$ if and only if $h'''(J) \neq 0$ for all $J \in (0, \infty)$. In fact, a similar calculation yields
\[
(D_u a_j)^\top r = - \, \frac{1}{2 a_j^2} | (\Cof U) \hat{\nu} |^4 \, h'''(J), 
\]
for $j=1,5$ as the dedicated reader may verify.
\end{remark}

Consequently, we have the following characterization of the $1$-shock fronts in terms of the parameter $\alpha \neq 0$.
\begin{proposition}
\label{proppadonde}
For a Hadamard material satisfying \eqref{H1} and \eqref{H2} and for any given $(U^+, v^+) \in M_+^d \times \R^d$ as base state, let us define, for any given $\alpha \in (-\infty, 0) \cup (0, \alpha_{\mathrm{max}}^+)$, 
\begin{equation}
\label{alpshockcurve}
\begin{aligned}
U^- &= U^+ - \alpha ( (\Cof U^+)_1 \otimes \eu), \\
v^- &= v^+ + s \alpha (\Cof U^+)_1,\\
s &= - \sqrt{ \mu + \frac{1}{\alpha} (h'(J^+) - h'(J^-)) },
\end{aligned}
\end{equation}
for which, necessarily, $J^- = \det U^- = J^+ - \alpha | (\Cof U^+)_1 |^2$. Therefore we have:
\begin{itemize}
\item[(a)] In the case where $0 < \alpha < \alpha_{\mathrm{max}}^+$: if $h'''(J) > 0$ for all $J \in [J^-, J^+]$ then $(U^\pm, v^\pm,s)$ is a Lax 1-shock.
\item[(b)] In the case where $\alpha < 0$: if $h'''(J) <0$ for all $J \in [J^+, J^-]$ then $(U^\pm, v^\pm,s)$ is a Lax 1-shock.
\end{itemize}
\end{proposition}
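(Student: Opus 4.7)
The plan is to verify that the quadruple $(U^\pm, v^\pm, s)$ defined by \eqref{alpshockcurve} satisfies all three defining properties of a Lax $1$-shock: (i) $U^- \in \M^d_+$; (ii) the Rankine--Hugoniot jump relations \eqref{RHe1}; and (iii) the strict Lax entropy inequalities \eqref{Laxece1v2}. The admissibility constraint $\alpha \in (-\infty,0) \cup (0,\alpha_{\mathrm{max}}^+)$, with $\alpha_{\mathrm{max}}^+$ as in \eqref{defalpmax}, immediately gives $J^- = J^+ - \alpha |(\Cof U^+)_1|^2 > 0$, so (i) is automatic.

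Next I would dispatch the Rankine--Hugoniot conditions. The definition of $U^-$ forces $\llb U_j \rrb = 0$ for $j\neq 1$ and $\llb U_1 \rrb = \alpha (\Cof U^+)_1$; in particular $(\Cof U^-)_1 = (\Cof U^+)_1$. The definition of $v^-$ then yields $\llb v \rrb = -s\,\llb U_1 \rrb$, which is the first relation in \eqref{RHe1}. For the second, Corollary \ref{corPKHad} and the equality of first columns of the cofactors give $\llb \sigma(U)_1 \rrb = \mu \llb U_1 \rrb + \llb h'(J) \rrb (\Cof U^+)_1$. Combining this with $\llb U_1 \rrb = \alpha (\Cof U^+)_1$ and $s^2 = \mu + \frac{1}{\alpha}\llb h'(J) \rrb$ yields $s^2 \llb U_1 \rrb = \llb \sigma(U)_1 \rrb$, i.e. the second Rankine--Hugoniot relation. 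The well-posedness of the square root defining $s$, together with $s<0$, follows from Remark \ref{remgoodsignalp}: strict convexity \eqref{H2} of $h$ guarantees $\frac{1}{\alpha}\llb h'(J) \rrb > 0$ for both signs of $\alpha$, so in fact $s^2 > \mu > 0$.

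The substantive step is the verification of \eqref{Laxece1v2}. Since $s<0$ and $s^2 > \mu$, and using $(\Cof U^-)_1 = (\Cof U^+)_1$, the double inequality reduces (after squaring) to
\[
h''(J^-)\, |(\Cof U^+)_1|^2 \;<\; s^2 - \mu \;<\; h''(J^+)\, |(\Cof U^+)_1|^2.
\]
Substituting $s^2 - \mu = \frac{1}{\alpha}\llb h'(J) \rrb$ and $\alpha |(\Cof U^+)_1|^2 = J^+ - J^-$, this is equivalent to the chord--tangent comparison
\[
h''(J^-) \;<\; \frac{h'(J^+) - h'(J^-)}{J^+ - J^-} \;<\; h''(J^+).
\]
By the mean value theorem, the central quotient equals $h''(\xi)$ for some $\xi$ strictly between $J^-$ and $J^+$.

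In case (a), $\alpha>0$ gives $J^- < J^+$, and $h'''>0$ on $[J^-, J^+]$ makes $h''$ strictly increasing, so $h''(J^-) < h''(\xi) < h''(J^+)$. In case (b), $\alpha<0$ gives $J^- > J^+$, and $h'''<0$ on $[J^+, J^-]$ makes $h''$ strictly decreasing there, so again $h''(J^-) < h''(\xi) < h''(J^+)$. The main piece of bookkeeping, and the only subtle point, is the inversion of the interval endpoints in case (b); this is exactly compensated by the reversal of monotonicity of $h''$ imposed by the sign of $h'''$, so the Lax conditions follow uniformly in both cases. All the remaining items (that $s^2 \neq \mu$, genuine nonlinearity along $[J^+,J^-]$, etc.) are consequences of the strict convexity of $h$ and Corollary \ref{corgnliffhtp}.
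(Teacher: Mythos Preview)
Your proof is correct and follows essentially the same approach as the paper: both reduce the Lax inequalities \eqref{Laxece1v2} to the comparison $h''(J^-) < \dfrac{h'(J^+)-h'(J^-)}{J^+-J^-} < h''(J^+)$ and then invoke the (strict) monotonicity of $h''$ dictated by the sign of $h'''$. Your presentation is in fact a bit more explicit than the paper's (you spell out the Rankine--Hugoniot verification and the mean-value-theorem step, whereas the paper just says ``from strict convexity of $h'$''), but the substance is identical.
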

\begin{proof}
Suppose $0 < \alpha < \alpha_{\mathrm{max}}^+$. If $h'''(J) > 0$ for all $J \in [J^-, J^+]$ then from \eqref{H2} and $\llb h'(J) \rrb/\alpha > 0$ we deduce that $s < - \sqrt{\mu}$. Also, from strict convexity of $h'$ and $J^+ > J^-$ we clearly have
\[
h''(J^+) > \frac{\llb h'(J) \rrb}{\alpha |(\Cof U^+)_1|^2},
\]
from which we deduce $- \sqrt{\mu + h''(J^+) |(\Cof U^+)_1|^2} < s$. A similar argument shows that $ s < - \sqrt{\mu + h''(J^-) |(\Cof U^-)_1|^2}$. Hence, the front is a Lax 1-shock. This proves (a). The proof of (b) is analogous.
\end{proof}

\begin{remark}
\label{remstrongshocks}
Observe that \eqref{alpshockcurve} determines the 1-shock curve (see \eqref{gshockcurve} in Appendix \ref{sechypall}) for all admissible values of $\alpha$ and not only for \emph{weak} shocks. Hence, we are able to consider shocks of \emph{arbitrary amplitude}, as there is no other restriction on $|\alpha|$ apart from the physical constraint $0 < \alpha < \alpha_{\mathrm{max}}^+$ on the positive side. For compressible Hadamard materials satisfying \eqref{H3} ($h''' < 0$ for all $J$), it is posible to construct arbitrarily large amplitude shocks for negative parameter values, $\alpha < 0$, with $ |\alpha| \gg 1$. It is to be observed that condition \eqref{H3} can be interpreted as the convexity of the hydrostatic pressure (see Remark \ref{remmatconvex} below) and, hence, the case in which $h''' > 0$ for \emph{all} $J$ turns out to be somewhat unphysical: most examples of energy densities in the literature (see, for example, section \S \ref{subsecexamples}) satisfy \eqref{H3} or, at most, they present changes in sign for $h'''(J)$. For simplicity, the latter concave/convex case is not considered in the stability analysis.
\end{remark}

\section{Normal modes analysis for elastic shocks}
\label{secnmodes}

In this section we perform the normal modes analysis prior to the establishment of the stability results. In particular, we compute all the necessary ingredients to assemble the Lopatinski\u{\i} determinant associated to a classical shock front (as described in Appendix \ref{sechypall}) for hyperelastic Hadamard materials.

Let $(U^\pm, v^\pm,s) \in \M_+^d \times \R^d \times \R$, with $(U^+,v) \neq (U^-,v^-)$ be an extreme Lax 1-shock propagating in the direction of $\hat{\nu} = \eu$ and satisfying Rankine-Hugoniot conditions \eqref{RHe1} and Lax entropy conditions \eqref{Laxece1v2}. Therefore, the analysis of normal mode solutions to the linearized problem around the shock of the form $e^{\lambda t} e^{\ii x \cdot \xi}$ is restricted to the open set of spatio-temporal frequencies,
\begin{equation}
\label{defGamp}
\Gamma^+ := \left\{ (\lambda, \txi) \in \C \times \R^{d-1} \, : \, \Re \lambda > 0, \, |\lambda|^2 + |\txi|^2 = 1\right\},
\end{equation}  
(see \eqref{genfreq}), where we have adopted the (now customary in the literature \cite{BS,JL}) notation for the Fourier frequencies,
\[
\xi = \begin{pmatrix}
0 \\ \txi
\end{pmatrix} \in \R^d, \qquad \txi = \begin{pmatrix} \xi_2 \\ \vdots \\ \xi_d
\end{pmatrix} \in \R^{d-1},
\]
with $\xi \cdot \eu = \xi^\top \eu = 0$. By a continuity of eigenprojections argument (cf. \cite{Kre70,M1,Me1}) the definition of the Lopatinski\u{\i} determinant on $\Gamma^+$ can be extended to its closure,
\begin{equation}
\label{defGam}
\Gamma := \left\{ (\lambda, \txi) \in \C \times \R^{d-1} \, : \, \Re \lambda \geq 0, \, |\lambda|^2 + |\txi|^2 = 1\right\}.
\end{equation} 
We are interested in normal modes of the matrix field
\begin{equation}
\label{caliA}
\cA(\lambda,\xim,U) = \Big(\lambda \In + \ii \sum\limits_{j\neq 1}\xi_{j} A^j(U) \Big) \Big(A^1(U) - s \In \Big)^{-1}, \qquad (\lambda, \xim, U) \in \Gamma^+ \times \M^d_+,
\end{equation}
under the assumption that $s \in \R$ is not characteristic with respect to $(\eu, U)$, that is, $s$ is not an eigenvalue of $A^1(U)$. This is particularly true in the case of the shock speed $s$ of a classical 1-shock with $U = U^\pm$.

\subsection{Calculation of the stable left bundle}

Following \cite{FrP1,FrP3,Pl2} and for convenience in the calculations to come, let us extend the definition of the acoustic tensor to allow complex frequencies. For each $(\omega, \widetilde{\omega}) \in \C \times \C^{d-1}$, $\omega_1 = \omega$, $\widetilde{\omega} = (\omega_2, \ldots, \omega_d)^\top$, we denote
\[
\tAT(\omega, \widetilde{\omega},U) := \sum_{i,j=1}^d \omega_i \omega_j B^i_j(U) = \omega^2 B_1^1(U) + \omega \sum_{j \neq 1} \omega_j \big( B^j_1(U) + B^1_j(U)\big) + \sum_{i,j \neq 1}^d \omega_i \omega_j B^i_j(U) \in \C^{d \times d}.
\]

Notice that, in view that the real acoustic tensor $\AT$ is symmetric, then $\tAT$ is endowed with the property $\tAT^*(\omega, \widetilde{\omega},U) = \tAT(\omega^*, \widetilde{\omega}^*,U)$. Yet, $\tAT$ is clearly invariant under simple transposition
\[
\tAT(\omega, \widetilde{\omega},U)^\top = \tAT(\omega, \widetilde{\omega},U),
\]
for all $(\omega, \widetilde{\omega},U) \in \C \times \C^{d-1} \times \M_+^d$, even though it is not Hermitian. Adopting this notation and from expression \eqref{exprAT} for a compressible Hadamard material, we readily obtain the following useful formula,
\begin{equation}
\label{tQbxiU}
\tAT(\ii \beta ,\xim, U) = \mu \big( - \beta^2 + |\xim|^2\big) \Id + h''(J) \left[ \Big( \Cof U \Big) \begin{pmatrix} \ii \beta \\ \xi_2 \\ \vdots \\ \xi_d \end{pmatrix} \otimes \Big( \Cof U \Big) \begin{pmatrix} \ii \beta \\ \xi_2 \\ \vdots \\ \xi_d \end{pmatrix}\right],
\end{equation}
for any $\beta \in \C$, $\txi \in \R^{d-1}$, $U \in \M_+^d$.

Next result characterizes the eigenvalues of the matrix field \eqref{caliA}.
\begin{lemma} 
\label{lemevalCA}
For any given $U \in \M^d_+$, $(\lambda, \xim) \in \Gamma$, the eigenvalues $\beta = \beta(\lambda,\txi) \in \C$ of matrix \eqref{caliA} are either:
\begin{itemize}
\item[(a)] $\beta=-\dfrac{\lambda}{s}$, with algebraic multiplicity $d^2 -d$; or
\item[(b)] $\beta$ is a root of
\begin{equation}\label{eqa}
\det\big((\lambda+\beta s)^2 \Id + \tAT(\ii \beta ,\xim, U)\big)=0.
\end{equation}
\end{itemize}
\end{lemma}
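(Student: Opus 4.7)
The plan is to characterize the eigenvalues $\beta$ of $\cA(\lambda, \xim, U)$ by observing that, since $s$ is non-characteristic, the matrix $A^1(U) - s\In$ is invertible. Hence $\beta$ is an eigenvalue of $\cA$ if and only if the $n\times n$ matrix
\[
\mathcal{M}(\beta) := (\lambda + \beta s)\In - \beta A^1(U) + \ii \sum_{j\neq 1} \xi_j A^j(U)
\]
is singular. The whole argument reduces to computing $\det\mathcal{M}(\beta)$ via a block-matrix identity.

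First, I would exhibit the block structure of $\mathcal{M}$. Using the explicit form of the Jacobians $A^j(U)$ (with a $d^2\times d^2$ zero block in the top-left, a $d^2\times d$ block of the type $E_j$ in the top-right, and a $d\times d^2$ block $B^j = [B_1^j(U)\,|\,\cdots\,|\,B_d^j(U)]$ in the bottom-left), one can write
\[
\mathcal{M}(\beta) \;=\; \begin{pmatrix} (\lambda + \beta s)\mathbb{I}_{d^2} & \mathcal{G} \\ \mathcal{H} & (\lambda+\beta s)\mathbb{I}_d \end{pmatrix},
\]
with $\mathcal{G} := \beta E_1 - \ii\sum_{j\neq 1}\xi_j E_j$ and $\mathcal{H} := \beta B^1 - \ii\sum_{j\neq 1}\xi_j B^j$. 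On the open set where $\lambda + \beta s \neq 0$, the Schur complement formula gives
\[
\det\mathcal{M}(\beta) \;=\; (\lambda + \beta s)^{d^2 - d}\,\det\!\Big((\lambda + \beta s)^2 \mathbb{I}_d - \mathcal{H}\mathcal{G}\Big),
\]
and, being a polynomial identity in $\beta$, this extends to all $\beta\in\C$.

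The essential computational step is then to identify $\mathcal{H}\mathcal{G}$ with $-\tAT(\ii\beta,\xim,U)$. Using the elementary product rule $B^j E_k = B_k^j(U)$, the four groups of terms in the expansion of $\mathcal{H}\mathcal{G}$ (arising from the index pairs $(1,1)$, $(1,k\neq 1)$, $(j\neq 1,1)$ and $(j,k\neq 1)$) combine, upon setting $\omega_1 := \ii\beta$ and $\omega_l := \xi_l$ for $l \neq 1$ and using $\ii^2 = -1$, into
\[
\mathcal{H}\mathcal{G} \;=\; -\sum_{\mu,\nu = 1}^d \omega_\mu \omega_\nu B_\nu^\mu(U) \;=\; -\tAT(\ii\beta,\xim,U).
\]
Substituting back yields
\[
\det \mathcal{M}(\beta) \;=\; (\lambda + \beta s)^{d^2 - d}\,\det\!\Big((\lambda + \beta s)^2 \mathbb{I}_d + \tAT(\ii\beta,\xim,U)\Big),
\]
whose zero set splits exactly as claimed: the factor $(\lambda + \beta s)^{d^2 - d}$ contributes the eigenvalue $\beta = -\lambda/s$ with algebraic multiplicity $d^2 - d$, and the remaining roots are precisely the solutions of \eqref{eqa}.

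The main obstacle will be not conceptual but notational: the bookkeeping in the expansion of $\mathcal{H}\mathcal{G}$ and the careful tracking of signs (those originating from the overall minus sign in the definition of $A^j$ and those from $\ii^2 = -1$), which must conspire to deliver the single minus sign in $\mathcal{H}\mathcal{G} = -\tAT(\ii\beta,\xim,U)$. Once this identification is secured, everything else is a standard block-determinant manipulation.
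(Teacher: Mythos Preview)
Your proposal is correct and follows essentially the same approach as the paper: both reduce the eigenvalue problem to the singularity of the matrix $(\lambda+\beta s)\In - \beta A^1(U) + \ii\sum_{j\neq 1}\xi_j A^j(U)$, write it in $2\times 2$ block form, and apply the Schur complement (using that the top-left block is the scalar $(\lambda+\beta s)\Idd$) to obtain the factorization $(\lambda+\beta s)^{d^2-d}\det((\lambda+\beta s)^2\Id + \tAT(\ii\beta,\xim,U))$. Your explicit remark that the identity extends to all $\beta$ as a polynomial equality is a small improvement over the paper, which leaves that step implicit.
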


\begin{proof}
Given  $(\lambda,\txi,U) \in \M_+^d \times \Gamma^+$, we look for a left (row) eigenvector $l = l(\lambda,\txi,U) \in \C^{1 \times n}$, associated to an eigenvalue $\beta$ satisfying
\begin{equation}
\label{lAbetl}
l\Big((\lambda+\beta s)\In-\beta A^1(U) + \ii \sum_{j\neq1}\xi_{j}A^j(U)\Big)=0.
\end{equation}
Since $l\neq 0$ we arrive at the following characteristic equation,
\[
\phi(\lambda,\xim,\beta,U) := \det\Big((\lambda+\beta s)\In-\beta A^1(U) + \ii \sum_{j\neq1}\xi_{j}A^j(U)\Big)=0.
\]
The matrix appearing in last equation can be written in block form as
\begin{equation}
\label{detg}
(\lambda +\beta s)\In -\beta A^1(U) + \ii \sum\limits_{j\neq1}\xi_{j}A^j(U) =
\begin{pmatrix} 
 & & & \beta \Id \\
 & (\lambda + \beta s) \Idd & & - \ii \xi_2 \Id \\
 & & & \vdots \\
 & & & - \ii \xi_d \Id \\
- \cG_1 &  \cdots & - \cG_d  & (\lambda + \beta s) \Id
\end{pmatrix} \\
=: \begin{pmatrix}
\cS_1 & \cS_2 \\ \cS_3 & \cS_4
\end{pmatrix},
\end{equation}
with blocks $\cS_1 \in \C^{d^2 \times d^2}$, $\cS_2 \in \C^{d^2 \times d}$, $\cS_3 \in \C^{d \times d^2}$, $\cS_4 \in \C^{d \times d}$, and where the matrix fields $(\beta, \txi,U) \mapsto \cG_k$ are defined as 
\begin{equation}
\label{defGk}
\cG_k = \cG_k(\beta, \xim, U) := - \beta B_k^1(U) + \ii \sum_{j \neq 1} \xi_j B_k^j(U) \; \in \C^{d \times d}.
\end{equation}
Suppose for the moment that $\lambda+\beta s\neq0$. Then we may use the block formula
\[
\det \begin{pmatrix}
\cS_1 & \cS_2 \\ \cS_3 & \cS_4
\end{pmatrix} = \det \cS_1 \, \det (\cS_4 - \cS_3 (\cS_1)^{-1} \cS_2),
\]
to reduce the determinant of \eqref{detg}. A direct computation shows that
\[
\cS_3 (\cS_1)^{-1} \cS_2 =
(\lambda + \beta s)^{-1} \left( \cG_1, \cdots,  \cG_d \right)\left(\begin{array}c
-\beta \Id\\
\ii \xi_2 \Id\\
\vdots\\
\ii \xi_d \Id
\end{array}\right) 
= \,- \, (\lambda + \beta s)^{-1} \tAT(\ii \beta ,\xim, U),
\]
yielding
\[
\phi(\lambda,\xim,\beta,U) = (\lambda + \beta s)^{d^2 -d} \det\big((\lambda+\beta s)^2 \Id + \tAT(\ii \beta ,\xim, U)\big).
\]
From this expression we conclude that $\beta = - \lambda/s$ is an eigenvalue of \eqref{caliA} with algebraic multiplicity $d^2 -d$. Otherwise, if $\lambda + \beta s \neq 0$ then $\beta$ is a root of equation \eqref{eqa}. The lemma is proved.
\end{proof}

The following lemma provides an expression for the left (row) eigenvector associated to any eigenvalue $\beta$ of the matrix field \eqref{caliA}.
\begin{lemma}
\label{lemforml}
For given $U \in\M^d_+$, $(\lambda, \xim) \in \Gamma$, let $\beta \in \C$ be an eigenvalue of the matrix \eqref{caliA} such that $\lambda +\beta s \neq 0$. Then the associated left eigenvector $l$ has the form
\begin{equation}
\label{genforml}
l = \Big( q^\top \cG_1, \, \ldots, \, q^\top \cG_d, (\lambda + \beta s) q^\top \Big) \in \C^{1 \times (d^2 + d)},
\end{equation}
where $\cG_k = \cG_k(\beta, \xim,U)$, $1 \leq k \leq d$, are defined in \eqref{defGk} and $q \in \C^{d \times 1}$ is a column vector such that
\begin{equation}
\label{qTQ}
\tAT(\ii \beta ,\xim, U) q = - (\lambda + \beta s)^2 q,
\end{equation}
that is, $q$ is an eigenvector of $\tAT(\ii \beta ,\xim, U)$ with eigenvalue $- (\lambda + \beta s)^2$.
\end{lemma}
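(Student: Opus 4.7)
The plan is to carry out exactly the block decomposition that was already used to compute the eigenvalues in Lemma~\ref{lemevalCA}, but now solving the homogeneous system from the left instead of just taking its determinant. Write the sought-after left eigenvector in the same block pattern as the rows of \eqref{detg}, namely
\[
l = (p_1, \, p_2, \, \ldots, \, p_d, \, p_{d+1}) \in \C^{1 \times (d^2 + d)},
\]
with $p_k \in \C^{1 \times d}$ for $k = 1, \ldots, d+1$. Then $l \cdot M = 0$, with $M := (\lambda + \beta s) \In - \beta A^1(U) + \ii \sum_{j \neq 1} \xi_j A^j(U)$, decomposes into two families of block equations. Multiplying $l$ against the $k$-th block column ($1 \le k \le d$) of the matrix displayed in \eqref{detg} yields
\[
(\lambda + \beta s) p_k - p_{d+1} \, \cG_k(\beta, \xim, U) = 0, \qquad k = 1, \ldots, d,
\]
while multiplying against the last block column gives
\[
\beta \, p_1 - \ii \sum_{k \neq 1} \xi_k \, p_k + (\lambda + \beta s) \, p_{d+1} = 0.
\]

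First I would solve the first family for the $p_k$ in terms of $p_{d+1}$: since by hypothesis $\lambda + \beta s \neq 0$, we have $p_k = (\lambda + \beta s)^{-1} p_{d+1} \cG_k$ for each $k = 1, \ldots, d$. Substituting these expressions into the last block equation and clearing the common factor $(\lambda + \beta s)^{-1}$, the condition on $p_{d+1}$ becomes
\[
p_{d+1} \Big( \beta \, \cG_1 - \ii \sum_{k \neq 1} \xi_k \, \cG_k + (\lambda + \beta s)^2 \Id \Big) = 0.
\]

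The next step is to identify the matrix in parentheses. Expanding the definition \eqref{defGk} of the $\cG_k$'s, a short bookkeeping gives
\[
\beta \, \cG_1 - \ii \sum_{k \neq 1} \xi_k \, \cG_k = - \beta^2 B_1^1(U) + \ii \beta \sum_{j \neq 1} \xi_j \big( B_1^j(U) + B_j^1(U) \big) + \sum_{i,j \neq 1} \xi_i \xi_j B_i^j(U),
\]
which is precisely $\tAT(\ii \beta, \xim, U)$ according to the complex extension of the acoustic tensor introduced just before \eqref{tQbxiU}. Therefore the condition on $p_{d+1}$ reads
\[
p_{d+1} \big( \tAT(\ii \beta, \xim, U) + (\lambda + \beta s)^2 \Id \big) = 0,
\]
and, using the simple transposition invariance $\tAT^\top = \tAT$, transposing gives \eqref{qTQ} for $q := p_{d+1}^\top \in \C^{d \times 1}$. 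Finally, renormalizing by the factor $\lambda + \beta s$ (which does not affect the left-eigenvector condition) produces the advertised form \eqref{genforml}.

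There is no real obstacle here: the only thing to double-check is that the algebraic identity between $\beta \cG_1 - \ii \sum_{k \neq 1} \xi_k \cG_k$ and $\tAT(\ii \beta, \xim, U)$ is consistent with the symmetry $B_i^j = (B_j^i)^\top$, so that the resulting scalar-like expression for $\tAT$ matches the quadratic form $\sum_{i,j} \omega_i \omega_j B_i^j(U)$ with $\omega_1 = \ii \beta$ and $\omega_k = \xi_k$ for $k \neq 1$. Beyond this bookkeeping, the argument is a purely linear-algebraic reduction of the same block structure already exploited in Lemma~\ref{lemevalCA}.
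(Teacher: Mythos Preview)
Your proposal is correct and follows essentially the same block-decomposition route as the paper: both write the left eigenvector in $d+1$ blocks of size $d$, read off the equations from the block structure of \eqref{detg}, eliminate the first $d$ blocks in favor of the last, and recognize the resulting reduced equation as the eigenvalue relation \eqref{qTQ} for the complexified acoustic tensor. Your presentation is actually more streamlined than the paper's, which inserts an auxiliary step showing $l^\top \in \mathrm{range}(\cT^\top)$ for $\cT = \beta A^1(U) - \ii\sum_{j\neq 1}\xi_j A^j(U)$ before computing $l\cT$ and matching components; you simply write $lM = 0$ directly and solve, which is cleaner and avoids some sign bookkeeping.
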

\begin{proof}
Take $U \in\M^d_+$, $(\lambda, \xim) \in \Gamma$ and let $\beta \in \C$ be an eigenvalue of $\cA$ with associated left eigenvector $l \in \C^{1 \times (d^2 + d)}$. Consider the matrix fields
\[
\cT = \cT(\lambda, \xim,U,\beta) := \beta A^1(U) - \ii \sum_{j \neq 1} \xi_j A^j(U) \in \C^{n \times n},
\]
with $n = d^2 + d$. Since $\C^n = \ker (\cT^\top) \oplus \mathrm{range} \, (\cT^\top)$ then either $l^\top \in \ker (\cT^\top)$ or $l^\top \in \mathrm{range} \, (\cT^\top)$. However, from $l \cA = \beta l$ we clearly have that expression \eqref{lAbetl} holds, yielding $\cT^\top l^\top = - (\lambda + \beta s) l^\top$. In view that $l \neq 0$ and $\lambda + \beta s \neq 0$ we then conclude that $l^\top \notin \ker (\cT^\top)$ and necessarily that $l^\top \in \mathrm{range} \, (\cT^\top)$. Let us now write
\[
l = \big( l_1, \ldots, l_d, l_{d+1} \big),
\]
where $l_k \in \C^{1 \times d}$ for each $1 \leq k \leq d+1$. Whence,
\begin{align}
l \cT = \big( l_1, \ldots, l_d, l_{d+1} \big) 
\begin{pmatrix} 
 & & & - \beta \Id \\
 & 0 & & \ii\xi_2 \Id \\
 & & & \vdots \\
 & & & \ii \xi_d \Id \\
\cG_1 &  \cdots & \cG_d  & 0
\end{pmatrix} &= \Big( l_{d+1} \cG_1, \, \ldots, \, l_{d+1} \cG_d, \, - \beta l_1 + \ii \sum_{j \neq 1} \xi_j l_j \Big) \nonumber\\
&=: \Big( l_{d+1} \cG_1, \, \ldots, \, l_{d+1} \cG_d, \, q^\top \Big). \label{imageT}
\end{align}
Use expression in \eqref{detg} and $l \cT = -(\lambda + \beta s) l$ to arrive at
\begin{equation}
\label{starr}
\begin{aligned}
\big( -q^\top + (\lambda + \beta s) l_{d+1} \big) \cG_k &= 0, & \quad 1 \leq k \leq d,\\
l_{d+1} \Big( \beta \cG_1 - \ii \sum_{j \neq 1} \xi_j \cG_j \Big) + (\lambda + \beta s) q^\top &= 0.
\end{aligned}
\end{equation}
The first $d$ equations in \eqref{starr} yield
\[
0 = \big( -q^\top + (\lambda + \beta s) l_{d+1} \big) \big( \cG_1, \ldots, \cG_d \big)
\begin{pmatrix}
\beta \Id \\ -\ii \xi_2 \Id \\ \vdots \\ -\ii \xi_d \Id
\end{pmatrix} \\
= \big( -q^\top + (\lambda + \beta s) l_{d+1} \big) \tAT(\ii \beta, \xim,U).
\]
The last equation in \eqref{starr} implies that
\[
l_{d+1} \Big( \beta \cG_1 - \ii \sum_{j \neq 1} \xi_j \cG_j \Big) = l_{d+1} \tAT(\ii \beta, \xim,U) = - (\lambda + \beta s) q^\top.
\]
Therefore we obtain
\[
q^\top \big( (\lambda + \beta s)^2 \Id + \tAT(\ii \beta, \xim,U) \big) = 0,
\]
that is, $q^\top$ is a left eigenvector of $\tAT(\ii \beta, \xim,U)$ with eigenvalue $- (\lambda + \beta s)^2$. Since $\tAT$ is invariant under simple transposition, $\tAT^\top = \tAT$, this is equivalent to \eqref{qTQ}. To find $l_{d+1}$ we notice that $\lambda + \beta s \neq 0$ and the first $d$ equations in \eqref{starr} imply that $l_{d+1} \cG_k = (\lambda + \beta s)^{-1} q^\top \cG_k$, for all $1 \leq k \leq d$. Substitute back into \eqref{imageT} to obtain
\[
l \cT = \Big( (\lambda + \beta s)^{-1} q^\top \cG_1, \ldots, (\lambda + \beta s)^{-1} q^\top \cG_d, q^\top \Big),
\]
and the general form of the left eigenvector is
\[
l = \big( q^\top \cG_1, \ldots, q^\top \cG_d, (\lambda + \beta s) q^\top \big),
\]
where $q$ is such that \eqref{qTQ} holds. This proves the lemma.
\end{proof}

Let us now focus on the 1-shock determined by $(U^\pm,v^\pm,s) \in \M_+^d \times \R^d \times \R$ satisfying \eqref{RHe1} and \eqref{Laxece1v2}. If we select $(U^+,v^+)$ as a base state then the shock is completely characterized by the parameter value $\alpha \neq 0$ described in Proposition \ref{proppadonde}. Let us define
\[
\cA^\pm(\lambda,\txi) := \cA(\lambda, \txi,U^\pm), \qquad (\lambda,\txi) \in \Gamma^+.
\]
From Hersh' lemma (see Appendix \ref{sechypall}, Remark \ref{remlopdetextr}), the stable eigenspace of $\cA^+(\lambda,\txi)$ has dimension equal to one for each $(\lambda,\txi) \in \Gamma^+$. Our goal is to compute the left (row) stable eigenvector $l_s^+(\lambda,\txi) \in \C^{1 \times n}$ of $\cA^+$ associated to the only stable eigenvalue $\beta$ with $\Re \beta < 0$. Thanks to Lemma \ref{lemforml}, this is equivalent to computing the column eigenvector $q^+$ of $\tAT^+(\ii \beta,\txi) := \tAT(\ii \beta,\txi,U^+)$.

In order to simplify the notation, let us write the cofactor matrix of $U^+$ as $V^+ := \Cof U^+ \in \M_+^d$, so that its $j$-th column is
\begin{equation}
\label{defVpj}
V_j^+ = (\Cof U^+)_j \in \R^{d \times 1},
\end{equation}
for each $1 \leq j \leq d$, and
\begin{equation}
\label{defkap2p}
(a_1^+)^2 = \kappa_2^+ = \mu + h''(J^+) |V_1^+|^2.
\end{equation}
Moreover, for any frequency vector $\xim = (\xi_2, \ldots, \xi_d)^\top \in \R^{d-1}$ we define the scalar (real) quantities,
\begin{equation}\label{omeeta}
\begin{aligned}
\eta^+(\xim) &:=  \sum_{j \neq 1} (V_1^+)^\top V_j^+ \xi_j,\\
\omega^+(\xim)& := \mu |\xim|^2 + h''(J^+)\left|V^+ \begin{pmatrix} 0 \\ \txi \end{pmatrix} \right|^2 = \mu |\xim|^2 + h''(J^+) \sum_{i,j \neq 1} (V_i^+)^{\top} V_j^+ \xi_i\xi_j,\\
\end{aligned}
\end{equation}
which depend only on the Fourier frequencies and on the elastic parameters of the material evaluated at the base state.

\begin{lemma}
\label{lemqcofU}
Let $\beta \in \C$ be the only stable eigenvalue with $\Re \beta < 0$ of the matrix field $\cA^+(\lambda, \xim)$, on $(\lambda,\xim) \in \Gamma^+$. Then the (column) eigenvector $q^+ \in \C^{d \times 1}$ of $\tAT^+(\ii \beta ,\xim)$ with associated eigenvalue $-(\lambda + \beta s)^2$, as described in Lemma \ref{lemforml}, can be uniquely selected (modulo scalings) as
\begin{equation}
\label{exqplus}
q^+ = q^+(\lambda,\txi):= (\Cof U^+) \begin{pmatrix} \ii \beta \\ \xi_2 \\ \vdots \\ \xi_d
\end{pmatrix}.
\end{equation}
Moreover, $\beta = \beta(\lambda,\txi)$ is a root of
\begin{equation}
\label{polybet}
\big(\kappa_2^+ - s^2\big) \beta^2 - 2\big(\lambda s + \ii h''(J^+) \eta^+(\txi) \big) \beta - \big(\lambda^2 + \omega^+(\txi)\big) = 0.
\end{equation}
\end{lemma}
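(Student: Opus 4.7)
First I would rule out the spurious candidate $\beta=-\lambda/s$ from Lemma \ref{lemevalCA}(a): since $s<0$ by the Lax conditions \eqref{Laxece1v2} and $\Re\lambda\geq 0$ on $\Gamma$, one has $\Re(-\lambda/s)\geq 0$, so $-\lambda/s$ is never the stable eigenvalue. Consequently, the stable $\beta$ must fall in case (b) of Lemma \ref{lemevalCA}, meaning that $-(\lambda+\beta s)^{2}$ is an eigenvalue of $\tAT^{+}(\ii\beta,\txi):=\tAT(\ii\beta,\txi,U^{+})$, and by Lemma \ref{lemforml} the associated stable left eigenvector $l_{s}^{+}$ of $\cA^{+}$ is parametrized (up to a nonzero scalar) by a column vector $q^{+}\in\C^{d\times 1}$ satisfying the eigenvalue equation \eqref{qTQ}.

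Next I would use the explicit formula \eqref{tQbxiU}, which for $U=U^{+}$ exhibits $\tAT^{+}(\ii\beta,\txi)$ as a rank-one perturbation of a scalar multiple of the identity:
\[
\tAT^{+}(\ii\beta,\txi)=\mu(|\txi|^{2}-\beta^{2})\,\Id+h''(J^{+})\bigl(w\otimes w\bigr),\qquad w:=(\Cof U^{+})\bigl(\ii\beta,\xi_{2},\ldots,\xi_{d}\bigr)^{\top}.
\]
In perfect analogy with Lemma \ref{lemevQ}, the identity $(w\otimes w)w=(w^{\top}w)\,w$ shows that $w$ is an eigenvector of $\tAT^{+}(\ii\beta,\txi)$ with eigenvalue $\mu(|\txi|^{2}-\beta^{2})+h''(J^{+})\,w^{\top}w$ (the simple one), while the $(d-1)$-fold eigenvalue $\mu(|\txi|^{2}-\beta^{2})$ has eigenspace transverse to $w$. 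Now invoke Hersh' lemma: the stable eigenspace of $\cA^{+}(\lambda,\txi)$ is one-dimensional on $\Gamma^+$, hence so is the $q$-eigenspace through the correspondence of Lemma \ref{lemforml}. This simultaneously forces $-(\lambda+\beta s)^{2}$ to coincide with the simple eigenvalue of $\tAT^{+}$ and forces $q^{+}$ to be a nonzero scalar multiple of $w$, establishing \eqref{exqplus} and its uniqueness modulo scaling.

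Finally, the scalar eigenvalue equation $-(\lambda+\beta s)^{2}=\mu(|\txi|^{2}-\beta^{2})+h''(J^{+})\,w^{\top}w$ should be reduced to \eqref{polybet} by expanding
\[
w^{\top}w=-\beta^{2}|V_{1}^{+}|^{2}+2\ii\beta\sum_{j\neq 1}\xi_{j}(V_{1}^{+})^{\top}V_{j}^{+}+\sum_{i,j\neq 1}\xi_{i}\xi_{j}(V_{i}^{+})^{\top}V_{j}^{+},
\]
recognizing the middle sum as $\eta^{+}(\txi)$ and, after accounting for the $\mu|\txi|^{2}$ contribution, the last sum via $\omega^{+}(\txi)$ as in \eqref{omeeta}, and finally using $\kappa_{2}^{+}=\mu+h''(J^{+})|V_{1}^{+}|^{2}$ from \eqref{defkap2p} to consolidate the $\beta^{2}$ coefficient; the $\mu|\txi|^{2}$ pieces cancel and the relation reorganizes cleanly into \eqref{polybet}. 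The delicate point I anticipate is not the algebraic bookkeeping but rather ensuring that the rank-one/simple-eigenvalue dichotomy survives the passage to the complex argument $\ii\beta$: since $\tAT^{+}$ is symmetric only under simple transposition (and not Hermitian), the scalar $w^{\top}w$ lives in $\C$ rather than $\R_{\geq 0}$, yet the algebraic identity $(w\otimes w)w=(w^{\top}w)w$ and the spectral decomposition into a $(d-1)$-fold plus a simple eigenvalue are preserved, which is precisely what reduces the $q$-space for the stable $\beta$ to a single complex line spanned by \eqref{exqplus}.
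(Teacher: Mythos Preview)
Your dimension-counting argument through Hersh's lemma has a genuine gap in dimension $d=2$. In that case $\tAT^{+}(\ii\beta,\txi)$ is a $2\times 2$ matrix and \emph{both} of its eigenvalues $\mu(|\txi|^{2}-\beta^{2})$ and $\mu(|\txi|^{2}-\beta^{2})+h''(J^{+})\,w^{\top}w$ have one-dimensional eigenspaces. Knowing that the stable eigenspace of $\cA^{+}$ (and hence the $q$-eigenspace of $\tAT^{+}$ at $-(\lambda+\beta s)^{2}$) is one-dimensional therefore does \emph{not} force $-(\lambda+\beta s)^{2}$ to be the ``$w$-eigenvalue'': it could equally well be the other one, with $q^{+}$ lying in the line $\{v:w^{\top}v=0\}$ transverse to $w$. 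Your argument is valid for $d\geq 3$ (where the transverse eigenspace has dimension $d-1\geq 2$ and would indeed contradict Hersh), but the lemma is stated for all $d\geq 2$.

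The paper closes this gap by a different, dimension-independent route: instead of counting eigenspace dimensions, it shows directly that the stable root $\beta$ can never satisfy the transverse relation $(\lambda+\beta s)^{2}+\mu(|\txi|^{2}-\beta^{2})=0$. At $\txi=0$ this factors as $(\sqrt{\mu}\,\beta-\lambda-\beta s)(\sqrt{\mu}\,\beta+\lambda+\beta s)=0$, whose two roots $\beta=\lambda/(\sqrt{\mu}-s)$ and $\beta=-\lambda/(\sqrt{\mu}+s)$ both have $\Re\beta>0$ because $s<-\sqrt{\mu}<0$. Since $\Gamma^{+}$ is connected and, by hyperbolicity, no eigenvalue of $\cA^{+}$ can cross $\Re\beta=0$ on $\Gamma^{+}$, both roots of this factor stay in $\{\Re\beta>0\}$ for all $(\lambda,\txi)\in\Gamma^{+}$. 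Hence the unique stable $\beta$ must be a root of the remaining factor $(\lambda+\beta s)^{2}+\mu(|\txi|^{2}-\beta^{2})+h''(J^{+})\,w^{\top}w=0$, which is exactly the condition that $-(\lambda+\beta s)^{2}$ is the $w$-eigenvalue, giving $q^{+}\propto w$ and the polynomial \eqref{polybet}. You should replace your Hersh-based selection step by this continuity argument (or at least supplement it for $d=2$). A minor aside: in your final algebraic step the $\mu|\txi|^{2}$ terms do not ``cancel'' but rather combine with the $h''(J^{+})$ contribution to form $\omega^{+}(\txi)$.
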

\begin{proof}
In view that $s < 0$ and $\Re \lambda > 0$ then $\Re (- \lambda /s) > 0$ and consequently $\lambda + \beta s \neq 0$. Hence, from Lemma \ref{lemevalCA} we know that $\beta$ is a root of
\[
\det\big((\lambda+\beta s)^2 \Id + \tAT^+(\ii \beta ,\xim)\big)=0.
\]
Use expression \eqref{tQbxiU} and apply Sylvester's determinant formula (cf. \cite{AAM96}) to obtain
\[
\begin{aligned}
0 &= \det \big((\lambda+\beta s)^2 \Id + \tAT^+(\ii \beta ,\xim)\big)\\
&= \det \Big( \big[(\lambda + \beta s)^2 + \mu(- \beta^2 + |\xim|^2) \big] \Id + h''(J^+) q^+ \otimes q^+ \Big)\\
&= \Big( (\lambda + \beta s)^2 + \mu(-\beta^2 + |\xim|^2) \Big)^{d-1} \Big( (\lambda + \beta s)^2 + \mu(-\beta^2 + |\xim|^2) + h''(J^+) (q^+)^\top q^+ \Big),
\end{aligned}
\]
where $q^+$ is defined in \eqref{exqplus}. Now suppose that $(\lambda + \beta s)^2 + \mu(-\beta^2 + |\xim|^2) = 0$. Since $\Re \beta < 0$ for all frequencies in a connected set, $(\lambda, \xim) \in \Gamma^+$, by continuity it suffices to evaluate $\sgn (\Re \beta)$ at $\xim = 0$ and $\Re \lambda > 0$ with $|\lambda| = 1$. Substituting we obtain
\[
(\sqrt{\mu} \beta - \lambda - \beta s) (\sqrt{\mu} \beta + \lambda + \beta s) = 0,
\]
yielding the roots
\[
\beta = \frac{\lambda}{\sqrt{\mu} -s}, \quad \beta = - \, \frac{\lambda}{\sqrt{\mu} +s}.
\]
But both roots have $\Re \beta > 0$ because $s < - \sqrt{\mu} < 0$, a contradiction with $\Re \beta < 0$. Therefore, we conclude that $\beta$ must be a root of
\[
\varphi(\lambda,\xim,s,\beta) := (\lambda + \beta s)^2 + \mu (-\beta^2 + |\xim|^2) + h''(J^+) (q^+)^\top q^+ = 0.
\]
To double-check the form of $q^+$, from expression \eqref{tQbxiU} we immediately observe that
\[
\begin{aligned}
\big( (\lambda + \beta s)^2 \Id +  \tAT^+(\ii \beta ,\xim)\big) q^+ &= (\lambda + \beta s)^2 q^+ + \mu(-\beta^2 + |\xim|^2)q^+ + h''(J^+)(q^+ \otimes q^+)q^+ \\
&= \big[ (\lambda + \beta s)^2  + \mu(-\beta^2 + |\xim|^2) + h''(J^+) (q^+)^\top q^+ \big] q^+\\
&= \varphi(\lambda,\xim,s,\beta) q^+ \\
&= 0.
\end{aligned}
\]
Henceforth, we conclude that $\tAT^+(\ii \beta ,\xim)$ has an eigenvector of the form \eqref{exqplus} where $\beta$ is a solution to $\varphi(\lambda,\xim,s,\beta) = 0$. Since $\beta$ is the only stable eigenvalue of $\cA^+(\lambda, \xim)$ for any $(\lambda,\xim) \in \Gamma^+$ then the eigenvector $q^+$ can be uniquely determined (modulo scalings) by expression \eqref{exqplus}. To simplify the characteristic polynomial, notice that
\begin{equation}
\label{normq}
\begin{aligned}
|q^+|^2 = (q^+)^\top q^+ &= \big( \ii\beta, \; \xi_2, \; \cdots \;, \xi_d \big) (\Cof U^+)^\top (\Cof U^+) \begin{pmatrix} \ii \beta \\ \xi_2 \\ \vdots \\ \xi_d\end{pmatrix}\\
&= - \beta^2 |V_1^+|^2 + 2 \ii \beta \sum_{j \neq 1} (V_1^+)^\top V_j^+ \xi_j + \sum_{i,j \neq 1} (V_i^+)^{\top} V_j^+ \xi_i\xi_j, 
\end{aligned}
\end{equation}
yielding
\[
\begin{aligned}
- \varphi(\lambda,\xim,s,\beta) &= (\mu + h''(J^+) |V_1^+|^2 - s^2) \beta^2 - 2\beta \Big(\lambda s + \ii h''(J^+) \sum_{j \neq 1} \xi_j (V_1^+)^\top V_j^+  \Big) + \\
& \; - \Big(\lambda^2 + \mu |\xim|^2 + h''(J^+) \sum_{i,j \neq 1} \xi_i\xi_j  (V_i^+)^{\top} V_j^+ \Big)\\
&= \big(\kappa_2^+ - s^2\big) \beta^2 - 2\big(\lambda s + \ii h''(J^+) \eta^+(\txi) \big) \beta - \big(\lambda^2 + \omega^+(\txi)\big)  = 0,
\end{aligned}
\]
as claimed.
\end{proof}

\begin{remark}
\label{remnotbetlamss}
Notice that, from natural considerations, $\lambda + \beta s \neq 0$ for the stable eigenvalue $\beta$ with $\Re \beta < 0$. Another way to interpret this fact is that the eigenvalue $\beta = - \lambda / s$ is incompatible with the curl-free conditions \eqref{curlfree} (see the discussion in \cite{FrP1}) and, therefore, it should be excluded from the normal modes analysis.
\end{remark}

\subsection{Calculation of the ``jump" vector}
\label{secjumpv}

In the present case of a shock propagating in the $\hat{\nu} = \eu$ direction, the calculation of the Lopatinski\u{\i} determinant (see expression \eqref{gLopdet}) involves the computation of the following ``jump" vector,
\begin{equation}
\label{defjv}
\cK = \cK(\lambda,\txi) := \lambda \llb u \rrb + \ii \sum_{j \neq 1} \xi_j \llb f^j(u) \rrb,
\end{equation}
which is a complex vector field in the frequency space, $(\lambda, \txi) \mapsto \cK(\lambda,\txi)$, $\cK \in C^\infty(\Gamma^+;\C^{n \times 1})$, associated to the Rankine-Hugoniot jump conditions \eqref{RHe1} across the shock. Use \eqref{RHe1} and \eqref{jumpstatevar} to obtain,
\[
\cK(\lambda,\txi) = \begin{pmatrix}
\lambda \llb U_1 \rrb \\ \ii s \xi_2 \llb U_1 \rrb \\ \vdots \\ \ii s \xi_d \llb U_1 \rrb \\ -\lambda s \llb U_1 \rrb - \ii \sum_{j \neq 1} \xi_j \llb \sigma(U)_j \rrb
\end{pmatrix} = \begin{pmatrix}
\lambda \Id & 0 \\ \ii s \xi_2 \Id & 0 \\ \vdots & \vdots \\ \ii s \xi_d \Id & 0 \\ 0 & \Id
\end{pmatrix}
\begin{pmatrix}
\llb U_1 \rrb \\  -\lambda s \llb U_1 \rrb - \ii \sum_{j \neq 1} \xi_j \llb \sigma(U)_j \rrb
\end{pmatrix}.
\]
From expression \eqref{genforml} for the general form of a left eigenvector, $l \in \C^{1 \times n}$, of $\cA$, we have 
\[
\begin{aligned}
l \begin{pmatrix}
\lambda \Id & 0 \\ \ii s \xi_2 \Id & 0 \\ \vdots & \vdots \\ \ii s \xi_d \Id & 0 \\ 0 & \Id
\end{pmatrix} &= q^\top \Big( \cG_1, \ldots, \cG_d, (\lambda + \beta s) \Id \Big) 
\left[
\begin{pmatrix}
-s \beta \Id & 0 \\ \ii s \xi_2 \Id & 0 \\ \vdots & \vdots \\ \ii s \xi_d \Id & 0 \\ 0 & 0
\end{pmatrix} + 
\begin{pmatrix}
(\lambda + \beta s) \Id & 0 \\ 0 & 0 \\ \vdots & \vdots \\ 0 & 0 \\ 0 & \Id
\end{pmatrix} \right] \\
&= \Big( q^\top \big( - s\beta \cG_1 + \ii s \sum_{j \neq 1} \xi_j \cG_j \big), \; 0 \Big) + \Big( (\lambda + \beta s) q^\top \cG_1, \; (\lambda + \beta s) q^\top  \Big) \\
&= \Big( -s q^\top \tAT(\ii \beta, \xim,U) + (\lambda + \beta s) q^\top \cG_1, \; (\lambda + \beta s) q^\top \Big)\\
&= (\lambda + \beta s) q^\top \big( s(\lambda + \beta s) \Id + \cG_1, \, \Id),
\end{aligned}
\]
inasmuch as \eqref{qTQ} holds and $\tAT$ is invariant under simple trasposition. Therefore,
\[
\begin{aligned}
l\cK &= (\lambda + \beta s) q^\top \Big( s(\lambda + \beta s) \llb U_1 \rrb  + \cG_1 \llb U_1 \rrb - \lambda s \llb U_1 \rrb - \ii \sum_{j \neq 1} \xi_j \llb \sigma(U)_j \rrb\Big) \\
&= (\lambda + \beta s) q^\top \Big( (\beta s^2 \Id + \cG_1) \llb U_1 \rrb  - \ii \sum_{j \neq 1} \xi_j \llb \sigma(U)_j \rrb\Big).
\end{aligned}
\]
Hence, we have proved the following result, which will be useful later on.
\begin{proposition}
\label{propexprlQ}
If $\beta \in \C$ is an eigenvalue of $\cA(\lambda, \xim,U)$ with associated eigenvector $l$, then
\begin{equation}
\label{exprlQ}
l\cK = (\lambda + \beta s) q^\top \Big( (\beta s^2 \Id + \cG_1) \llb U_1 \rrb  - \ii \sum_{j \neq 1} \xi_j \llb \sigma(U)_j \rrb\Big),
\end{equation}
where $\cK$ is the ``jump" vector in \eqref{defjv}, $\cG_1$ is defined in \eqref{defGk} and $q \in \C^{d \times 1}$ is such that \eqref{qTQ} holds.
\end{proposition}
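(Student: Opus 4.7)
My plan is to contract the explicit form of the left eigenvector from Lemma \ref{lemforml} with a simplified form of $\cK$ obtained from the Rankine-Hugoniot conditions, and then apply the eigenvector relation \eqref{qTQ} to collapse everything.

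First I would exploit the jump conditions \eqref{RHe1} to replace $\llb v \rrb$ by $-s\llb U_1\rrb$ and to set $\llb U_j\rrb = 0$ for all $j\neq 1$. Substituting into \eqref{defjv} collapses $\cK$ to a two-block expression: the first $d$ blocks all involve $\llb U_1\rrb$ alone (multiplied by $\lambda$ and $\ii s\xi_j$ respectively), while the last block reads $-\lambda s\llb U_1\rrb - \ii\sum_{j\neq 1}\xi_j\llb\sigma(U)_j\rrb$. It is convenient to rewrite this as a block matrix acting on the two-component column $\bigl(\llb U_1\rrb,\, -\lambda s\llb U_1\rrb - \ii\sum_{j\neq 1}\xi_j\llb\sigma(U)_j\rrb\bigr)^\top$.

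Next I would plug the general form \eqref{genforml} of $l$ into $l\cK$. Contracting $l$ against the matrix factor of $\cK$ produces, in the first entry, a sum $q^\top\bigl(-s\beta\,\cG_1 + \ii s\sum_{j\neq 1}\xi_j\,\cG_j\bigr) + (\lambda+\beta s)q^\top \cG_1$, and, in the second entry, simply $(\lambda+\beta s)q^\top$. The crucial algebraic identity here is $-\beta\,\cG_1 + \ii\sum_{j\neq 1}\xi_j\,\cG_j = -\tAT(\ii\beta,\xim,U)$, which follows directly from the definition \eqref{defGk} of $\cG_k$ upon reorganizing the double sum defining $\tAT$ according to whether each index equals $1$ or not (and using $(\ii\beta)^2 = -\beta^2$). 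Invoking the eigenvector relation \eqref{qTQ} together with $\tAT^\top = \tAT$, one has $q^\top \tAT = -(\lambda+\beta s)^2 q^\top$, so the first entry collapses to $(\lambda+\beta s)q^\top\bigl(s(\lambda+\beta s)\Id + \cG_1\bigr)$.

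Finally I would multiply the resulting row vector against the two-component column above. The coefficient of $\llb U_1\rrb$ simplifies via $s(\lambda+\beta s) - \lambda s = \beta s^2$, while the stress-jump contribution is inherited verbatim from the second block, and factoring $(\lambda+\beta s)q^\top$ yields exactly \eqref{exprlQ}. The only delicate point is the sign bookkeeping in the identity relating $\cG_1$, the other $\cG_j$'s, and $\tAT(\ii\beta,\xim,U)$; once that identity is in hand, the rest is a mechanical block-matrix computation made clean by the Rankine-Hugoniot reduction and by the fact that the acoustic-type tensor $\tAT$ is symmetric under simple transposition.
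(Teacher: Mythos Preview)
Your proposal is correct and follows essentially the same approach as the paper: simplify $\cK$ via the Rankine--Hugoniot relations into a block-matrix times a two-component column, contract with the explicit form of $l$ from Lemma~\ref{lemforml}, recognize $\beta\,\cG_1 - \ii\sum_{j\neq 1}\xi_j\,\cG_j = \tAT(\ii\beta,\xim,U)$, apply the eigenvector identity \eqref{qTQ} using $\tAT^\top=\tAT$, and then collapse $s(\lambda+\beta s)-\lambda s=\beta s^2$. The paper carries out exactly these steps in the computation immediately preceding the proposition.
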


Let us now compute the elements involved in the definition of the jump vector field $\cK$. For simplicity, we introduce the notations
\[
{B^i_j}^+ := B^i_j(U^+) \in \R^{d \times d}, \quad \cG_k^+ := \cG_k(U^+) \in \C^{d \times d}, \qquad 1 \leq i,j,k \leq d.
\]
For later use we also compute (using formulae \eqref{exprBijs}, \eqref{defkap2p} and \eqref{defVpj}),
\begin{equation}
\label{sidB1V1}
\begin{aligned}
({B^1_1}^+-s^2 \Id ) V_1^+ &= \Big[  \mu \Id + h''(J^+) V_1^+ (V_1^+)^\top - s^2 \Id \Big] V_1^+ \\
&= (\mu-s^2 ) V_1^+ + h''(J^+) |V_1^+|^2 V_1^+\\
&= (\kappa_2^+-s^2 ) V_1^+, 
\end{aligned}
\end{equation}
as well as,
\begin{equation}
\label{B1jV1}
\begin{aligned}
{B_1^j}^+ V_1^+ &= \left[ h''(J^+) (V_j^+ \otimes V_1^+) + \frac{h'(J^+)}{J^+} \Big(V_j^+ \otimes V_1^+ - V_1^+ \otimes V_j^+\Big)\right] V_1^+ \\
&= \left( h''(J^+) + \frac{h'(J^+)}{J^+}\right) |V_1^+|^2 V_j^+ - \frac{h'(J^+)}{J^+} \big((V_j^+)^\top V_1^+\big) V_1^+, \\
&= \left[ \kappa_2^+-\mu + \frac{h'(J^+)}{J^+} |V_1^+|^2 \right] V_j^+ -  \frac{h'(J^+)}{J^+}  (V_j^+\cdot V_1^+) V_1^+,  \qquad \text{for all } \; j \neq 1.
\end{aligned}
\end{equation}

Now, from Rankine-Hugoniot conditions \eqref{RHe1}, relation \eqref{jumpCofU1} and Proposition \ref{proppadonde}, it is clear that
\begin{equation}
\label{workrels}
\begin{aligned}
\llb U_1 \rrb &= \alpha V_1^+,\\
\llb U_j \rrb &= 0, \qquad j \neq 1,\\
V_1^+ &= (\Cof U^+)_1 = (\Cof U^-)_1.
\end{aligned}
\end{equation}

Let us first compute the jump of the Piola-Kirchhoff stress tensor across the shock. From \eqref{HadPiolKir} we have
\[
\llb \sigma(U)_j \rrb = \mu \llb U_j \rrb + \llb h'(J) (\Cof U)_j \rrb = \alpha (s^2 - \mu) V_j^+ + h'(J^-) \llb (\Cof U)_j \rrb, \qquad \text{for } \; j \neq 1,
\]
after having substituted relation \eqref{speedsq}. Now, notice that from \eqref{RHmod} there holds
\[
U^- = U^+ - \alpha (V_1^+ \otimes \eu) = U^+ - \alpha \Big( V_1^+, \; 0, \; \cdots, \; 0 \Big),
\]
that is, $U^+$ and $U^-$ differ by a matrix with all columns equal to zero except for the first one (that is why, for instance, $(\Cof U^+)_1 = (\Cof U^-)_1 = V_1^+$). We shall use this information to find a suitable expression for the jump in the cofactor matrix column $\llb V_j \rrb = \llb (\Cof U)_j \rrb$, $j \neq 1$. For any $1 \leq i,j \leq d$, with $j \neq 1$, and by elementary properties of the determinant, the $(i,j)$-entry of $\Cof U^-$ is given by
\[
\begin{aligned}
(\Cof U^-)_{ij} &= (-1)^{i + j} \det \left[ \big( U^+ - \alpha (V_1^+ \otimes \eu)\big)'_{(i,j)}\right]\\
&= (-1)^{i + j} \det \left[ \big( U_1^+ - \alpha V_1^+, \; U_2^+, \; \cdots \;, U_d^+ \big)'_{(i,j)}\right]\\
&= (-1)^{i + j} \det \left[ \big( U_1^+, \; U_2^+, \; \cdots \;, U_d^+ \big)'_{(i,j)}\right] - \alpha (-1)^{i + j} \det \left[ \big( V_1^+, \; U_2^+, \; \cdots \;, U_d^+ \big)'_{(i,j)}\right]\\
&= (\Cof U^+)_{ij} - \alpha M_{ij}^+,
\end{aligned}
\]
where $M^+ \in \R^{d \times d}$ is the real $d \times d$ matrix whose first column is zero, $M_1^+ := 0$, and whose $(i,j)$-entry for any $1 \leq i,j \leq d$, with $j \neq 1$, is defined as
\begin{equation}
\label{defMij}
M_{ij}^+ :=  (-1)^{i + j} \det \left[ \big( V_1^+, \; U_2^+, \; \cdots \;, U_d^+ \big)'_{(i,j)}\right] = \Big( \Cof \big( V_1^+, \; U_2^+, \; \cdots \;, U_d^+ \big)\Big)_{ij}, \qquad j \neq 1.
\end{equation}
Henceforth we obtain,
\[
\llb (\Cof U)_1\rrb = \llb V_1 \rrb = 0, \qquad \llb (\Cof U)_j \rrb = \llb V_j \rrb = \alpha M_j^+, \quad j \neq 1.
\]
Upon substitution, we obtain the expressions for the jump of the stress tensor across the shock,
\begin{equation}
\label{jumpsigj1}
\llb \sigma(U)_j \rrb = \alpha \Big((s^2 - \mu) V_j^+ + h'(J^-) M_j^+\Big), \qquad \text{for } \; j \neq 1,
\end{equation}
and,
\[
\llb \sigma(U)_1 \rrb = \alpha s^2 V_1^+.
\]

\begin{remark}
\label{remMij}
The first column of $M^+$ is zero because $(\Cof U^+)_1 = (\Cof U^-)_1$. Notice that $M^+$ is a smooth function of the entries of $U^+$, $M^+ \in C^\infty(\M_+^d;\R^{d \times d})$. For example, in two spatial dimensions ($d = 2$) and after a straightforward computation one verifies that $\Cof U^- = \Cof U^+ - \alpha M^+$ where
\begin{equation}
\label{exprMd2}
M^+ = \begin{pmatrix}
0 & U_{12}^+ \\ 0 & U_{22}^+
\end{pmatrix} = U_2^+ \otimes \hat{e}_2 \, \in \R^{2 \times 2}.
\end{equation}
Likewise, when $d = 3$ one finds that 
\begin{equation}
\label{exprMd3}
M^+ = \Big( 0,  \quad U_3^+ \times V_1^+, \quad - U_2^+ \times V_1^+ \Big) \; \in \R^{3 \times 3}.
\end{equation}
\end{remark}

\subsection{Further simplifications}

In order to simplify the lengthy calculations to come, let us introduce the following notations. First, we write the scalar products of the columns of the cofactor matrix $V^+$ as
\begin{equation}
\label{defthetaij}
\theta_{ij} := (V_i^+)^\top V^+_j \in \R,
\end{equation}
for each $1 \leq i,j \leq d$. In this fashion, it is clear that $\theta^+_{jj} = |V^+_j|^2 > 0$, $\theta^+_{ij} = \theta^+_{ji}$ for all $i,j$, and that $\theta^+_{ij}$ is the $(i,j)$-entry of the real symmetric matrix $(V^+)^\top V^+$. Moreover, we define
\begin{equation}
\label{defThetaij}
\Theta^+_{ij} := \det \begin{pmatrix} 
\theta^+_{11} & \theta^+_{1j} \\ \theta^+_{i1} & \theta^+_{ij}
\end{pmatrix}, \qquad 1 \leq i,j \leq d.
\end{equation}
From its definition and Cauchy-Schwarz inequality it is clear that the matrix $\Theta^+ \in \R^{d \times d}$ satisfies
\begin{equation}
\label{propThetaij}
\begin{cases}
\Theta^+_{11} = \Theta^+_{j1} = \Theta^+_{1j} = 0, & 1 \leq j \leq d,\\
\Theta^+_{jj} > 0, &  j \neq 1,\\
\Theta^+_{ij} = \Theta^+_{ji}, & 1 \leq i,j \leq d.
\end{cases} 
\end{equation}

Next, we prove a result which significantly reduces the calculation of the large determinants involved in the products $(V_i^+)^\top M_j$ appearing in the assembly of the Lopatinski\u{\i} determinant.
\begin{lemma}
\label{lemprodMj}
For all $1 \leq i,j \leq d$, $d \geq 2$, there holds
\begin{equation}
\label{prodViMj}
(V_i^+)^\top M_j^+ = \frac{\Theta^+_{ij}}{J^+}.
\end{equation}
(In particular, we recover $(V_i^+)^\top M^+_1 \equiv 0$, for all $i$.)
\end{lemma}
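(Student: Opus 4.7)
The claim is trivially satisfied when $j=1$: by construction $M_1^+=0$ and by \eqref{propThetaij} $\Theta^+_{i1}=0$, so both sides vanish. Fix $j\neq 1$ and set $N^+:=(V_1^+,U_2^+,\ldots,U_d^+)$, so that $M_j^+=(\Cof N^+)_j$ by definition. The starting point of the plan is the observation that
\[
(V_i^+)^\top M_j^+ = \sum_{p=1}^d V^+_{pi}\,(-1)^{p+j}\det\!\big(N^+_{(p,j)}\big) = \det X_{i,j},
\]
where $X_{i,j}\in\R^{d\times d}$ is the matrix obtained from $N^+$ by replacing its $j$-th column with $V_i^+$; the rightmost equality is precisely the Laplace expansion of $\det X_{i,j}$ along the $j$-th column. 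When $i=1$, two columns of $X_{1,j}$ are equal to $V_1^+$, so $\det X_{1,j}=0$, matching $\Theta^+_{1j}=0$.

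For $i,j\neq 1$ the plan is to evaluate the auxiliary product $X_{i,j}^\top V^+\in\R^{d\times d}$ rather than $\det X_{i,j}$ directly, because the identity $(U^+)^\top V^+=J^+\Id$ (transpose of \eqref{exprcof}) produces highly sparse rows. Entry-wise: for $\ell\notin\{1,j\}$ the $\ell$-th row of $X_{i,j}^\top V^+$ equals $(U_\ell^+)^\top V^+ = J^+ \hat{e}_\ell^\top$, whereas the rows indexed by $1$ and $j$ are, respectively, $(V_1^+)^\top V^+=(\theta^+_{1m})_{m=1}^d$ and $(V_i^+)^\top V^+=(\theta^+_{im})_{m=1}^d$. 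Applying the same permutation $\sigma$ to rows and columns to send $\{1,j\}$ to positions $\{1,2\}$, whose two contributions cancel as $\mathrm{sgn}(\sigma)^2=+1$, the matrix becomes block lower-triangular,
\[
\begin{pmatrix} \left(\begin{smallmatrix} \theta^+_{11} & \theta^+_{1j}\\ \theta^+_{i1} & \theta^+_{ij}\end{smallmatrix}\right) & \ast \\ 0 & J^+\,\mathbb{I}_{d-2} \end{pmatrix},
\]
because each row $J^+\hat{e}_\ell^\top$ with $\ell\neq 1,j$ has zero entry in the columns $\{1,j\}$. Hence its determinant equals $\Theta^+_{ij}\,(J^+)^{d-2}$ by \eqref{defThetaij}.

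On the other hand, using the classical identity $\det V^+=\det\Cof U^+=(\det U^+)^{d-1}=(J^+)^{d-1}$, one has $\det(X_{i,j}^\top V^+)=\det X_{i,j}\cdot (J^+)^{d-1}$. Matching the two evaluations yields $\det X_{i,j}=\Theta^+_{ij}/J^+$, which together with step one establishes \eqref{prodViMj}. The only genuinely delicate step is the sign bookkeeping in the simultaneous row/column permutation, which is resolved by noting that the same $\sigma$ is applied to rows and to columns so that $\mathrm{sgn}(\sigma)^2=+1$ and no extra minus sign appears; direct verifications in $d=2$ via \eqref{exprMd2} and in $d=3$ via \eqref{exprMd3} serve as useful sanity checks.
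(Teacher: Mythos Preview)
Your proof is correct and takes a genuinely different route from the paper's. The paper first disposes of $d=2$ by hand and, for $d\geq 3$, invokes the identity $(\Cof A)^\top\Cof B=\Cof(A^\top B)$ to write $(V_i^+)^\top M_j^+$ as a cofactor of $(U^+)^\top N^+$; it then uses $(U^+)^\top V_1^+=J^+\hat{e}_1$, expands along the first column, and applies Jacobi's complementary minor formula to identify the remaining $(d-2)\times(d-2)$ minor of the Cauchy--Green tensor with $\Theta^+_{ij}/J^+$. Your argument instead recognizes $(V_i^+)^\top M_j^+$ directly as $\det X_{i,j}$ via Laplace expansion, then computes $\det(X_{i,j}^\top V^+)$ by exploiting the same orthogonality relation $(U_\ell^+)^\top V^+=J^+\hat{e}_\ell^\top$ to obtain a block-triangular structure after a simultaneous row/column permutation. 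The two approaches share the crucial input $(U^+)^\top V^+=J^+\Id$, but yours avoids the somewhat specialized Jacobi identity and works uniformly for all $d\geq 2$ without a separate low-dimensional case. A minor notational point: what you write as $N^+_{(p,j)}$ is what the paper denotes $(N^+)'_{(p,j)}$ (row $p$ and column $j$ deleted).
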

\begin{proof}
Let us first verify formula \eqref{prodViMj} in the case of two space dimensions, $d = 2$. If $j \neq 1$ then $j=2$ and from \eqref{exprMd2} we have
\[
M_2^+ = \begin{pmatrix}
U^+_{12}\\ U_{22}^+
\end{pmatrix},  \quad 
V_1^+ = \begin{pmatrix} 
U_{22}^+ \\ - U_{12}^+
\end{pmatrix}, \quad 
V_2^+ = \begin{pmatrix} 
-U_{21}^+ \\ U_{11}^+
\end{pmatrix}.
\]
Thus, clearly, $(V_1^+)^\top M_2^+ = 0$ and $(V_2^+)^\top M_2^+ = J^+ > 0$. But from \eqref{propThetaij} and $\Theta_{22}^+ = \theta_{11}^+ \theta_{22}^+ - (\theta^+_{12})^2 = (J^+)^2$, we conclude that \eqref{prodViMj} holds.

Let us now suppose that $d \geq 3$. First, observe that since $V^+ = \Cof U^+$ then $(U^+)^\top V^+ = J^+ \Id$ and, thus, $(U^+)^\top V_1^+ = J^+ \eu$. Now, take any $j \neq 1$ and any $1 \leq i \leq d$. From the definition of $M^+$ (see \eqref{defMij}) and the basic properties, $(\Cof A^\top) = (\Cof A)^\top$ and $(\Cof A)^\top \Cof B = \Cof (A^\top B)$ for any $A, B \in \R^{d \times d}$,  we compute
\[
\begin{aligned}
(V_i^+)^\top M_j^+ &= \sum_{k=1}^d \big[ (\Cof U^+)^\top \big]_{ik} \left[ \Cof \big( V_1^+, \; U_2^+, \; \cdots, \, U^+_d \big)\right]_{kj} \\
&= \left[ (\Cof U^+)^\top \, \Cof \big( V_1^+, \; U_2^+. \; \cdots , \, U^+_d \big) \right]_{ij}\\
&= \left[ \Cof \big( (U^+)^\top V_1^+, \; (U^+)^\top U_2^+, \cdots, \, (U^+)^\top U^+_d \big) \right]_{ij}\\
&= (-1)^{i+j} \det \left( \Big( J^+ \eu, \; (U^+)^\top U^+_2, \cdots, \, (U^+)^\top U^+_d \Big)'_{(i,j)}\right)\\
&=: (-1)^{i+j} \det E'_{(i,j)}.
\end{aligned}
\]
To compute, for $j \neq 1$, this last determinant we expand along the first column to obtain
\[
\det E'_{(i,j)} = \det \left( \big( J^+ \eu, \; (U^+)^\top U^+_2, \cdots , \, (U^+)^\top U^+_d \big)'_{(i,j)}\right) = J^+ \det \left[ \big( (U^+)^\top U^+ \big)'_{(1i,1j)} \right],
\]
where for any matrix $A \in \R^{d \times d}$, with $d \geq 3$, $A'_{(1i,1j)}$ denotes the $(d-2) \times (d-2)$ submatrix formed by eliminating rows 1 and $i$, and columns 1 and $j$ from the original matrix $A$. Likewise, for any matrix $A$, $A_{(1i,1j)} \in \R^{2 \times 2}$ denotes the submatrix
\[
A_{(1i,1j)} = \begin{pmatrix} A_{11} & A_{1j} \\ A_{i1} & A_{ij} \end{pmatrix},
\]
for all $1 \leq i,j \leq d$. The computation of the $(d-2) \times (d-2)$ determinant of $A'_{(1i,ij)}$ is considerably reduced by the use of  Jacobi's formula (see Theorem 2.5.2 in Prasolov \cite{Praso94}, or Gradshteyn and Ryzhik \cite{GrRy7ed}, p. 1076):
\[
(-1)^{i+j} \det A \det A'_{(1i,1j)} = \det \big[ (\Cof A)_{(1i,1j)} \big].
\]
A direct application of last equation to the Cauchy-Green tensor $A = (U^+)^\top U^+$ yields,
\[
\begin{aligned}
(V_i^+)^\top M_j^+ &= (-1)^{i+j} \det E'_{(i,j)}\\
&= (-1)^{i+j} J^+ \det \left[ \big( (U^+)^\top U^+ \big)'_{(1i,1j)} \right]\\
&= (-1)^{i+j} J^+ (-1)^{-i-j} (\det (U^+)^\top U^+ )^{-1} \det \big[ \big(\Cof ((U^+)^\top U^+) \big)_{(1i,1j)} \big]\\
&= \frac{1}{J^+} \det \begin{pmatrix} \theta_{11}^+ & \theta_{1j}^+ \\ \theta^+_{1i} & \theta^+_{ij} \end{pmatrix}\\
&= \frac{\Theta^+_{ij}}{J^+},
\end{aligned}
\]
for the case $j \neq 1$ and $d \geq 3$. Moreover, notice that formula \eqref{prodViMj} is also valid for $j = 1$ because of \eqref{propThetaij} and $M^+_1 = 0$. The lemma is proved.
\end{proof}

\subsection{Summary}

To sum up, and for the convenience of the reader, we apply our simplified notation and gather in one place all the ingredients we have computed so far and which will be used to assemble the Lopatinski\u{\i} determinant in the next section. Indeed, use the short-cuts \eqref{omeeta}, \eqref{defVpj}, \eqref{defkap2p}, \eqref{defMij}, \eqref{defthetaij} and \eqref{defThetaij} to recast formulae \eqref{exqplus}, \eqref{defGk} with $k =1$, the first equation in \eqref{workrels}, \eqref{jumpsigj1}, \eqref{B1jV1}, \eqref{sidB1V1}, the first equation in \eqref{omeeta}, the second in \eqref{omeeta} and \eqref{speedsq} as,
\begin{equation}
\label{exqplusnew}
q^+(\lambda,\txi)^\top = \big( \ii \beta, \; \xi_2, \; \cdots, \; \xi_d \big) (V^+)^\top = \ii \beta (V_1^+)^\top + \sum_{i \neq 1} \xi_i (V_i^+)^\top \; \in \C^{1 \times d},
\end{equation}
\begin{equation}
\label{defG1new}
\cG_1^+ = \cG_1(\beta, \xim, U^+) = - \beta {B_1^1}^+ + \ii \sum_{j \neq 1} \xi_j {B_1^j}^+ \; \in \C^{d \times d},
\end{equation}
\begin{equation}
\label{saltoU1}
\llb U_1 \rrb = \alpha V_1^+ \; \in \R^{d \times 1},
\end{equation}
\begin{equation}
\label{jumpsigj1new}
\llb \sigma(U)_j \rrb = \alpha \Big((s^2 - \mu) V_j^+ + h'(J^-) M_j^+\Big) \; \in \R^{d \times 1},  \;\;\; j \neq 1,
\end{equation}
\begin{equation}
\label{B1jV1new}
{B_1^j}^+ V_1^+ = (\kappa_2^+-\mu) V_j^+ + \frac{h'(J^+)}{J^+} \big( \theta^+_{11} V_j^+ - \theta^+_{1j} V_1^+ \big), \in \R^{d \times 1}, \;\;\; j \neq 1,
\end{equation}
\begin{equation}
\label{sidB1V1new}
({B^1_1}^+-s^2 \Id ) V_1^+ = (\kappa_2^+-s^2 ) V_1^+ \; \in \R^{d \times 1}, 
\end{equation}
\begin{equation}
\label{etanew}
\eta^+(\xim) =  \sum_{j \neq 1} \xi_j \theta_{1j}^+ ,
\end{equation}
\begin{equation}
\label{omeganew}
\omega^+(\xim) = \mu |\xim|^2 + h''(J^+) \sum_{i,j \neq 1}  \xi_i\xi_j \theta_{ij}^+,
\end{equation}
and,
\begin{equation}
\label{speedsqnew}
\frac{1}{\alpha} \llb h'(J) \rrb = s^2 - \mu > 0,
\end{equation}
respectively. Finally, use formulae \eqref{sidB1V1}, \eqref{jumpsigj1new}, \eqref{prodViMj} and \eqref{B1jV1new} to further obtain:
\begin{equation}
\label{sidBiV1new}
(V_i^+)^\top \big({B_1^1}^+ - s^2 \Id \big) V_1^+ = (\kappa_2^+ - s^2) \theta_{i1}^+, \qquad 1 \leq i \leq d,
\end{equation}
\begin{align}
(V_i^+)^\top \big({B_1^j}^+ V_1^+ - \frac{1}{\alpha} \llb \sigma(U)_j \rrb \big) &=  (V_i^+)^\top \Big[ \Big( (\kappa_2^+ - \mu) + \frac{h'(J^+)}{J^+} \theta_{11}^+ \Big) V_j^+ - \frac{h'(J^+}{J^+} \theta_{j1}^+ V_1^+ \nonumber\\ 
& \qquad  - \big( (s^2 - \mu) V_j^+ + h'(J^-) M_j^+ \big) \Big] \nonumber \\
&= (\kappa_2^+ - s^2) \theta_{ij}^+ + \frac{h'(J^+)}{J^+} \big( \theta_{11}^+ \theta_{ij}^+ - \theta_{j1}^+ \theta_{i1} \big) - \frac{h'(J^-)}{J^+} \Theta_{ij}^+ \nonumber \\
&= (\kappa_2^+ - s^2) \theta_{ij}^+ + \alpha (s^2 - \mu) \frac{\Theta_{ij}^+}{J^+}, \label{secondstep2}
\end{align}
for all $1 \leq i,j \leq d$, $j \neq 1$. In particular, since $\Theta_{1j}^+ = 0$ we have, from last formula with $i=1$,
\begin{equation}
\label{secondstep2bis}
(V_1^+)^\top \big({B_1^j}^+ V_1^+ - \frac{1}{\alpha} \llb \sigma(U)_j \rrb \big) = (\kappa_2^+ - s^2) \theta_{1j}^+, \qquad j \neq 1.
\end{equation}

\section{Stability results}
\label{seclopdet}

\subsection{The Lopatinski\u{\i} determinant}
\label{secLopdet}

In this section, we calculate the Lopatinski\u{\i} determinant (or stability function) associated to a Lax 1-shock for compressible Hadamard materials. The main idea is to assemble different (yet equivalent) expressions, so that we can draw stability conclusions from them. In the present case of an extreme 1-shock, the stable subspace of $\mathcal{A}^+(\lambda, \xim)$ has dimension equal to one for all $(\lambda,\xim) \in \Gamma^+$ (see Remark \ref{remlopdetextr} in Appendix \ref{sechypall}). Therefore, the Lopatinski\u{\i} determinant reduces to the expression \eqref{Lopdetextr},
\[
\overline{\Delta}(\lambda, \xim) = l_+^s(\lambda,\xim) \cK(\lambda,\xim),
\]
where $l_+^s(\lambda,\xim)$ is the left stable (row) eigenvector of $\mathcal{A}^+(\lambda, \xim)$ associated to the only stable eigenvalue $\beta$ with $\Re \beta < 0$ and $\cK(\lambda,\xim)$ is the jump vector \eqref{defjv}. From Proposition \ref{propexprlQ} we obtain
\[
\overline{\Delta}(\lambda, \xim) = (\lambda + \beta s) \widehat{\Delta}(\lambda,\xim),
\]
where 
\begin{equation}
\label{trueLopdet}
\widehat{\Delta}(\lambda,\xim) := q^+(\lambda,\xim)^\top \Big( (\beta s^2 \Id + \mathcal{G}_1) \llb U_1 \rrb  - \ii \sum_{j \neq 1} \xi_j \llb \sigma(U)_j \rrb\Big), \qquad (\lambda, \xim) \in \Gamma^+,
\end{equation}
and $q^+$ is given by \eqref{exqplusnew}. In view that $\lambda + \beta s \neq 0$ for all $(\lambda,\xim) \in \Gamma^+$, the scalar complex field \eqref{trueLopdet} \emph{encodes all the information regarding the stability of the shock front} and, thus, we shall focus on determining the zeroes of $\widehat{\Delta}$ on $\Gamma$ (including, by continuity, the boundary $\partial \Gamma \subset \{\Re \lambda = 0\}$). We remind the reader that the frequency $\lambda = - \beta s$ is incompatible with the physical curl-free conditions \eqref{curlfree} and, therefore, we rule out the limit $\lim \beta = - \lim \lambda/s = - \Im \lambda /s$ as $\Re \lambda \to 0^+$ when considering zeroes of $\overline{\Delta}$ along the imaginary axis; see Remark \ref{remnotbetlamss}. 

Substitute \eqref{exqplusnew}, \eqref{sidB1V1new}, \eqref{etanew}, \eqref{defG1new}, \eqref{secondstep2}, \eqref{secondstep2bis} and \eqref{saltoU1} into \eqref{trueLopdet} to obtain
\begin{align}
\frac{\ii}{\alpha} \widehat{\Delta}(\lambda,\xim) &= \Big[ \ii \beta (V_1^+)^\top + \sum_{i \neq 1} \xi_i (V_i^+)^\top \Big] \Big[ - \ii \beta \big( {B_1^1}^+ - s^2 \Id \big)V_1^+ - \sum_{j \neq 1} \xi_j \big( {B_1^j}^+ V_1^+ -\frac{1}{\alpha} \llb \sigma(U)_j \rrb \big) \Big] \nonumber \\
&= \beta^2 (\kappa_2^+ - s^2) \theta_{11}^+ - 2 \ii \beta (\kappa_2^+ - s^2) \sum_{j \neq 1} \xi_j \theta_{1j}^+ - \sum_{i,j \neq 1} \xi_i \xi_j \Big( (\kappa_2^+ - s^2) \theta_{ij}^+ + \alpha(s^2 - \mu) \frac{\Theta_{ij}^+}{J^+}\Big). \label{thirdstep}
\end{align}

This is the main expression for the Lopatinski\u{\i} determinant we shall be working with. At this point we introduce the following material parameter which, in fact, determines the stability of the shock (see Theorems \ref{teoweakstab} and \ref{stabcriteria} below).
\begin{defin}[material stability parameter]
\label{defirho}
For any 1-shock in the $\eu$-direction for a compressible Hadamard material, we define
\begin{equation}
\label{defrho}
\rho(\alpha) :=  (s^2 - \mu) \left( \frac{1}{\theta_{11}^+}-\frac{\alpha}{J^+}   \right)-h''(J^+)  \; \in \, \R.
\end{equation}
\end{defin}

It is to be noticed that $\rho(\alpha)$ depends only on the shock parameters (the base state and of the shock strength) and on the elastic moduli of the material. It is, of course, independent of the Fourier frequencies $\xim \in \R^{d-1}$. We also define for notational convenience,
\begin{equation}
\label{defNp}
N^+(\xim)^2 := \left| V^+ \! \begin{pmatrix} 0 \\ \xim \end{pmatrix} \right|^2 = \sum_{i,j \neq 1} \xi_i \xi_j \theta_{ij}^+,
\end{equation}
for all $\xim \in \R^{d-1}$
\begin{lemma}[Lopatinski\u{\i} determinant, version 1]
\label{lemLopdetv1}
The Lopatinski\u{\i} determinant \eqref{thirdstep} can be recast as
\begin{equation}
\label{fourthstep}
\frac{\ii}{\alpha} \widehat{\Delta}(\lambda,\xim) = (\kappa_2^+ - s^2) \theta_{11}^+ \Big( \beta - \ii \frac{\eta^+(\xim)}{\theta_{11}^+}\Big)^2 + \rho(\alpha) \big( \theta_{11}^+ N^+(\xim)^2 - \eta^+(\xim)^2\big).
\end{equation}
\end{lemma}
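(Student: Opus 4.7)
The plan is to start from the already-derived expression \eqref{thirdstep} for $(\ii/\alpha)\widehat{\Delta}$ and perform two algebraic manipulations: first, rewrite the $\Theta_{ij}^+$–sum in terms of $\eta^+$ and $N^+$; second, complete the square in $\beta$ and recognize the coefficient of the remaining bilinear form as the material stability parameter $\rho(\alpha)$.

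First, I would unpack the last sum in \eqref{thirdstep} using the definition \eqref{defThetaij}, $\Theta_{ij}^+ = \theta_{11}^+ \theta_{ij}^+ - \theta_{1j}^+ \theta_{i1}^+$. Together with \eqref{etanew} and \eqref{defNp} this gives the identity
\[
\sum_{i,j\neq 1} \xi_i\xi_j \Theta_{ij}^+ \;=\; \theta_{11}^+ N^+(\xim)^2 \;-\; \eta^+(\xim)^2,
\]
because $\sum_{i,j\neq 1} \xi_i\xi_j \theta_{1j}^+\theta_{i1}^+ = \big(\sum_{j\neq 1}\xi_j\theta_{1j}^+\big)\big(\sum_{i\neq 1}\xi_i\theta_{i1}^+\big) = \eta^+(\xim)^2$ by the symmetry $\theta_{ij}^+=\theta_{ji}^+$. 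Substituting this together with the shorthand \eqref{etanew} for the $-2\ii\beta$-term yields
\[
\frac{\ii}{\alpha}\widehat{\Delta}(\lambda,\xim) \;=\; (\kappa_2^+-s^2)\big[\theta_{11}^+\beta^2 - 2\ii\beta\,\eta^+(\xim) - N^+(\xim)^2\big] \;-\; \frac{\alpha(s^2-\mu)}{J^+}\big(\theta_{11}^+ N^+(\xim)^2 - \eta^+(\xim)^2\big).
\]

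Next I would complete the square in $\beta$ inside the first bracket:
\[
\theta_{11}^+\beta^2 - 2\ii\beta\,\eta^+ \;=\; \theta_{11}^+\Big(\beta - \ii\frac{\eta^+}{\theta_{11}^+}\Big)^2 + \frac{(\eta^+)^2}{\theta_{11}^+},
\]
which allows one to combine the leftover scalar terms as
\[
(\kappa_2^+-s^2)\Big(\tfrac{(\eta^+)^2}{\theta_{11}^+}-N^+(\xim)^2\Big) \;=\; -\,\frac{\kappa_2^+-s^2}{\theta_{11}^+}\big(\theta_{11}^+ N^+(\xim)^2 - \eta^+(\xim)^2\big).
\]
Thus the common bilinear factor $\theta_{11}^+ N^+(\xim)^2 - \eta^+(\xim)^2$ appears in both leftover pieces, with total coefficient
\[
-\,\frac{\kappa_2^+-s^2}{\theta_{11}^+} \;-\; \frac{\alpha(s^2-\mu)}{J^+}.
\]

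The final step is to recognize this coefficient as $\rho(\alpha)$. Using \eqref{defkap2p}, namely $\kappa_2^+ = \mu + h''(J^+)\theta_{11}^+$, one has $-(\kappa_2^+-s^2)/\theta_{11}^+ = (s^2-\mu)/\theta_{11}^+ - h''(J^+)$, so the coefficient becomes
\[
(s^2-\mu)\Big(\frac{1}{\theta_{11}^+}-\frac{\alpha}{J^+}\Big) - h''(J^+) \;=\; \rho(\alpha),
\]
by Definition \ref{defirho}, completing the identification with \eqref{fourthstep}. No step looks genuinely hard: the only points that require care are the symmetry-based collapse of $\eta^{+2}$ from the $\Theta_{ij}^+$ sum and the tidy bookkeeping when folding the $(\kappa_2^+-s^2)/\theta_{11}^+$ term into $\rho(\alpha)$ via the relation $\kappa_2^+-\mu = h''(J^+)\theta_{11}^+$.
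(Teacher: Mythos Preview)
Your proof is correct and follows essentially the same strategy as the paper: expand $\Theta_{ij}^+$ via \eqref{defThetaij}, complete the square in $\beta$, and identify the leftover coefficient with $\rho(\alpha)$ through the relation $\kappa_2^+-\mu = h''(J^+)\theta_{11}^+$. The only cosmetic difference is the order of operations---the paper substitutes the definition of $\rho(\alpha)$ into the summand $(\kappa_2^+-s^2)\theta_{ij}^+ + \alpha(s^2-\mu)\Theta_{ij}^+/J^+$ \emph{before} summing and completing the square, whereas you first collapse the sums into $N^+(\xim)^2$ and $\eta^+(\xim)^2$ and recognize $\rho(\alpha)$ only at the end---but the algebraic content is identical.
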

\begin{proof}
Follows by direct computation and by noticing that the last term inside the sum in \eqref{thirdstep} is
\[
(\kappa_2^+ - s^2) \theta_{ij}^+ + \alpha(s^2 - \mu) \frac{\Theta_{ij}^+}{J^+} =- \rho(\alpha) \theta_{11}^+ \theta_{ij}^+ + \Big( \rho(\alpha) + h''(J^+) - \frac{s^2 - \mu}{\theta_{11}^+} \Big) \theta_{1j}^+ \theta_{i1}^+,
\]
after having substituted \eqref{defrho} and \eqref{defkap2p}. Using \eqref{defNp} and \eqref{etanew}, the Lopatinski\u{\i} determinant \eqref{thirdstep} can be written as
\[
\begin{aligned}
\frac{\ii}{\alpha} \widehat{\Delta}(\lambda,\xim) &= \beta^2 (\kappa_2^+ - s^2) \theta_{11}^+ - 2 \ii \beta (\kappa_2^+ - s^2) \sum_{j \neq 1} \xi_j \theta_{1j}^+ + \rho(\alpha) \theta_{11}^+ N^+(\xim)^2 + \\
& \quad - \Big( \rho(\alpha) + h''(J^+) - \frac{s^2 - \mu}{\theta_{11}^+} \Big) \eta^+(\xim)^2 \\
&= (\kappa_2^+ - s^2) \theta_{11}^+ \Big( \beta - \ii \frac{\eta^+(\xim)}{\theta_{11}^+}\Big)^2 + \rho(\alpha) \big( \theta_{11}^+ N^+(\xim)^2 - \eta^+(\xim)^2\big),
\end{aligned}
\]
as claimed. Notice that this formula is simply the completion of the square in the variable $\beta$.
\end{proof}

\subsection{Sufficient condition for weak stability}
\label{secsufcondws}

Based on the first version of the Lopatinski\u{\i} determinant, formula \eqref{fourthstep} above, we are ready to establish our first stability theorem. First, we need to prove the following elementary
\begin{lemma}
\label{lemauxsign}
For all $\xim \in \R^{d-1}$, there holds
\begin{equation}
\label{signNeta}
P^+(\xim) := \theta_{11}^+ N^+(\xim)^2 - \eta^+(\xim)^2 \geq 0.
\end{equation}
Moreover, equality holds only when $\xim = 0$.
\end{lemma}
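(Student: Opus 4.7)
The plan is to recognize $P^+(\tilde{\xi})$ as a Gram-type quantity and apply the Cauchy--Schwarz inequality. First I would introduce the auxiliary vector
\[
w = w(\tilde{\xi}) := V^+ \begin{pmatrix} 0 \\ \tilde{\xi}\end{pmatrix} = \sum_{j \neq 1} \xi_j V_j^+ \; \in \R^d,
\]
so that, directly from the definitions \eqref{defthetaij}, \eqref{etanew} and \eqref{defNp}, one has
$\theta_{11}^+ = |V_1^+|^2$, $\eta^+(\tilde{\xi}) = (V_1^+)^\top w$ and $N^+(\tilde{\xi})^2 = |w|^2$. Therefore
\[
P^+(\tilde{\xi}) = |V_1^+|^2 \, |w|^2 - \big( (V_1^+)^\top w\big)^2,
\]
which is nonnegative by the Cauchy--Schwarz inequality, proving the first assertion.

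For the equality case, I would use that the cofactor matrix $V^+ = \Cof U^+$ is invertible: indeed $\det V^+ = (\det U^+)^{d-1} = (J^+)^{d-1} > 0$, so its columns $V_1^+, \ldots, V_d^+$ are linearly independent. Cauchy--Schwarz gives equality only when $V_1^+$ and $w$ are linearly dependent. Since $w$ lies in the span of $V_2^+, \ldots, V_d^+$, which intersects $\mathrm{span}\{V_1^+\}$ trivially, the linear dependence forces $w = 0$. Linear independence of $V_2^+, \ldots, V_d^+$ then implies $\tilde{\xi} = 0$, giving the converse direction.

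The argument is short and there is no real obstacle; the only point that deserves a line of justification is the invertibility of $\Cof U^+$ (and hence the linear independence of its columns), which I would derive from the identity $(\Cof U^+)^\top U^+ = J^+ \mathbb{I}_d$ recorded in \eqref{exprcof}, or equivalently from $\det(\Cof U^+) = (J^+)^{d-1} > 0$.
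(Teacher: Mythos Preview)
Your proof is correct and follows essentially the same approach as the paper: both rewrite $P^+(\tilde{\xi})$ as $|V_1^+|^2|w|^2 - ((V_1^+)^\top w)^2$ with $w = V^+\bigl(\begin{smallmatrix}0\\\tilde{\xi}\end{smallmatrix}\bigr)$ and then use linear independence of the columns of $V^+$ for the equality case. The only cosmetic difference is that you invoke Cauchy--Schwarz directly, whereas the paper phrases the nonnegativity via the eigenvalues of the positive semidefinite matrix $|V_1^+|^2\Id - V_1^+\otimes V_1^+$; your version is slightly more streamlined and also makes explicit the reason $V^+$ is invertible.
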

\begin{proof}
Since $N^+(0)^2 = \eta^+(0)^2 = 0$ for $\xim = 0$, it suffices to prove that $\theta_{11}^+ N^+(\xim)^2 - \eta^+(\xim)^2 > 0$ for all $\xim \in \R^{d-1}$, $\xim \neq 0$. First, we write the above expression as a quadratic form
\[
\begin{aligned}
P^+(\xim) &= |V_1^+|^2 \left| V^+ \!\begin{pmatrix} 0 \\ \xim \end{pmatrix} \right|^2 - \left( V^+ \! \begin{pmatrix} 0 \\ \xim \end{pmatrix}\right)^\top \Big( V_1^+ \otimes V_1^+ \Big) V^+ \! \begin{pmatrix} 0 \\ \xim \end{pmatrix} \\
&= \left( V^+ \! \begin{pmatrix} 0 \\ \xim \end{pmatrix}\right)^\top \Big( |V_1^+|^2 \Id - V_1^+ \otimes V_1^+ \Big) V^+ \! \begin{pmatrix} 0 \\ \xim \end{pmatrix}.
\end{aligned}
\]
Notice that the eigenvalues of the matrix $|V_1^+|^2 \Id - V_1^+ \otimes V_1^+$ are $\widetilde{\nu} = 0$ and $\widetilde{\nu} = |V_1^+|^2 = \theta_{11}^+ > 0$. Indeed, for $\widetilde{\nu} \neq \theta_{11}^+$, use Sylvester's determinant formula to obtain
\[
\det \Big( (\theta_{11}^+ - \widetilde{\nu})\Id - V_1^+(V_1^+)^\top \Big) = - \widetilde{\nu} \big( \theta_{11}^+ - \widetilde{\nu}\big)^{d-1}.
\]
This implies that $\widetilde{\nu} = 0$ is a simple eigenvalue associated to the eigenvector $V_1^+$, inasmuch as $(|V_1^+|^2 \Id - V_1^+ \otimes V_1^+) V_1^+ = 0$. Hence, we conclude that $|V_1^+|^2 \Id - V_1^+ \otimes V_1^+$ is positive semi-definite and $P^+(\xim) \geq 0$ for all $\xim \in \R^{d-1}$. Now suppose that $P^+(\xim) = 0$ for some $\xim \neq 0$. Since $\widetilde{\nu} = 0$ is a simple eigenvalue, this implies that $V^+ \bigl( \begin{smallmatrix} 0 \\ \xim \end{smallmatrix} \bigr) = k V_1^+$ for some scalar $k$ or, in other words, that the columns of $V^+$ are linearly dependent, a contradiction. This proves the lemma.
\end{proof}

\begin{theo}[sufficient condition for weak stability]
\label{teoweakstab}
For a compressible hyperelastic Hadamard material satisfying assumptions \eqref{H1} -- \eqref{H3}, consider a classical Lax 1-shock with intensity $\alpha \neq 0$. Suppose that
\begin{equation}
\label{condstab1}
\rho(\alpha) \geq 0.
\end{equation}
Then the shock is, at least, \emph{weakly stable} (more precisely, there are no roots of the Lopatinski\u{\i} determinant in $\Gamma^+$).
\end{theo}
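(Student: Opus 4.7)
The plan is to exploit the completed-square form \eqref{fourthstep} of the Lopatinski\u{\i} determinant obtained in Lemma \ref{lemLopdetv1}. The entire argument reduces to a sign analysis of the two terms on the right-hand side of \eqref{fourthstep}, followed by a contradiction with the negativity of $\Re\beta$ on $\Gamma^+$.

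First, I would verify that the coefficient $(\kappa_2^+ - s^2)\theta_{11}^+$ appearing in \eqref{fourthstep} is \emph{strictly positive}. Indeed, the Lax entropy conditions \eqref{Laxece1v2} give $\mu < s^2 < \kappa_2^+$, and $\theta_{11}^+ = |V_1^+|^2 > 0$ since $V^+ = \Cof U^+$ is invertible (as $U^+ \in \M_+^d$). Next, under the hypothesis $\rho(\alpha) \geq 0$, Lemma \ref{lemauxsign} yields
\[
\rho(\alpha)\bigl(\theta_{11}^+ N^+(\xim)^2 - \eta^+(\xim)^2\bigr) \geq 0 \quad \text{for all } \xim \in \R^{d-1}.
\]
Hence the right-hand side of \eqref{fourthstep} is the sum of a \emph{complex} square (with positive real coefficient) and a \emph{non-negative real} number.

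Second, I would argue by contradiction. Suppose there exist $(\lambda_0, \xim_0) \in \Gamma^+$ with $\widehat{\Delta}(\lambda_0, \xim_0) = 0$, and let $\beta = \beta(\lambda_0,\xim_0) \in \C$ be the unique stable eigenvalue of $\cA^+(\lambda_0,\xim_0)$ with $\Re\beta < 0$ (recall this eigenvalue exists by Hersh' Lemma as the stable subspace of $\cA^+$ has dimension one for extreme shocks on $\Gamma^+$). Setting $c := \eta^+(\xim_0)/\theta_{11}^+ \in \R$ and rearranging \eqref{fourthstep}, we obtain
\[
\bigl(\beta - \ii c\bigr)^2 = -\,\frac{\rho(\alpha)\bigl(\theta_{11}^+ N^+(\xim_0)^2 - \eta^+(\xim_0)^2\bigr)}{(\kappa_2^+ - s^2)\theta_{11}^+} \in (-\infty, 0].
\]
So the complex number $(\beta - \ii c)^2$ is a real non-positive number. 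Writing $\beta - \ii c = a + \ii b$ with $a, b \in \R$, the vanishing of the imaginary part forces $ab = 0$, and the non-positivity of the real part gives $a^2 - b^2 \leq 0$. A quick case split ($b = 0 \Rightarrow a^2 \leq 0 \Rightarrow a = 0$; $a = 0$ is already the desired conclusion) shows $a = 0$, i.e. $\Re\beta = 0$. This contradicts $\Re\beta < 0$.

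The computation is essentially mechanical once the form \eqref{fourthstep} is in hand; the ``hard part'' was already done upstream in establishing the completed-square identity and the non-negativity lemma. The only subtle point in the present step is to make sure the eigenvalue $\beta$ used in assembling $\widehat{\Delta}$ really is the one with $\Re\beta < 0$ throughout $\Gamma^+$: this follows from Lemma \ref{lemqcofU}, which selects the stable root of the quadratic \eqref{polybet}, together with Hersh' Lemma for extreme shocks (Remark \ref{remlopdetextr}). The conclusion, namely the absence of zeros of $\widehat{\Delta}$ (and hence of $\overline{\Delta}$) in the open set $\Gamma^+$, is precisely the definition of weak linear stability.
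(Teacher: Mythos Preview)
Your proof is correct and takes essentially the same approach as the paper: both exploit the completed-square form \eqref{fourthstep}, the positivity of $(\kappa_2^+ - s^2)\theta_{11}^+$, Lemma \ref{lemauxsign}, and the fact that $\Re\beta < 0$ on $\Gamma^+$. The only cosmetic difference is that the paper factors the normalized determinant as $(\beta - \ii c - \ii\delta)(\beta - \ii c + \ii\delta)$ with $\delta \geq 0$ and observes that each factor has real part $\Re\beta < 0$, whereas you argue by contradiction that $(\beta - \ii c)^2 \in (-\infty,0]$ forces $\Re\beta = 0$; these are two phrasings of the same elementary observation.
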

\begin{proof}
According to Proposition \ref{proppadonde}, given the base state $(U^+, v^+) \in \M_+^d \times \R^d$, the shock is completely characterized by the parameter $\alpha \in (-\infty,0) \cup (0, \alpha^+_{\mathrm{max}})$. Suppose that for a fixed value of $\alpha \neq 0$ (independently of its sign) condition \eqref{condstab1} holds\footnote{notice that under \eqref{H3} necessarily $\alpha < 0$, in view of Proposition \ref{proppadonde}; the result holds, however, independently of the sign of $\alpha$.}. Let us normalize the Lopatinski\u{\i} determinant as,
\[
\check{\Delta} (\lambda, \xim) := \frac{\ii}{\alpha} \frac{\widehat{\Delta}(\lambda,\xim)}{(\kappa_2^+ - s^2)\theta_{11}^+}, \qquad (\lambda, \xim) \in \Gamma^+.
\]
From Lemma \ref{lemauxsign} and Lax conditions, we have $P^+(\xim) \geq 0$, $\theta_{11}^+ > 0$ and $\kappa_2^+ - s^2 > 0$. Thus, using \eqref{condstab1} we may define
\[
\delta:= \sqrt{\frac{\rho(\alpha) P^+(\xim)}{(\kappa_2^+ - s^2) \theta_{11}^+}} \geq 0,
\]
for all $(\lambda,\xim) \in \Gamma^+$, and write
\[
\check{\Delta} (\lambda, \xim) = \Big( \beta - \ii \frac{\eta^+(\xim)}{\theta_{11}^+}\Big)^2+ \delta^2 = \Big(  \beta - \ii \frac{\eta^+(\xim)}{\theta_{11}^+} - \ii \delta \Big) \Big(  \beta - \ii \frac{\eta^+(\xim)}{\theta_{11}^+} + \ii \delta \Big).
\]
In view that the real part of each factor in last formula is negative ($\Re \beta < 0$ in $\Gamma^+$), we conclude that $\check{\Delta}$ never vanishes in $\Gamma^+$. 
\end{proof}

\subsection{Locating zeroes along the imaginary axis}

In order to locate zeroes of the Lopatinski\u{\i} determinant along the imaginary axis, we need to find a new expression for it. For that purpose, we examine in more detail the unique stable eigenvalue $\beta = \beta(\lambda,\xim)$ with $\Re \beta < 0$ of $\mathcal{A}^+$, $(\lambda,\xim) \in \Gamma^+$, and define an appropriate mapping in the spatio-temporal frequency space.

Recall that $\beta \in \C$ is a root of the second order characteristic polynomial \eqref{polybet} (see Lemma \ref{lemqcofU}), whose discriminant is,
\[
4 \Xi(\lambda,\xim) :=  4(\lambda s + \ii h''(J^+) \eta^+(\xim))^2 + 4 (\kappa_2^+ - s^2) (\lambda^2 + \omega^+(\xim)), \qquad (\lambda,\xim) \in \Gamma^+.
\]
This is a second order polynomial in $\lambda$. Completing the square in $\lambda$ yields,
\[
\Xi(\lambda,\xim) =  \left[ \left( \sqrt{\kappa_2^+} \lambda + \ii \, \frac{sh''(J^+) \eta^+(\xim)}{\sqrt{\kappa_2^+}}\right)^2 + (\kappa_2^+ - s^2)\zeta^+(\xim) \right],
\]
where
\begin{equation}
\label{defzetap}
\zeta^+(\xim) := \omega^+(\xim) - \frac{h''(J^+)^2}{\kappa_2^+} \eta^+(\xim)^2 \in \R.
\end{equation}
Therefore, the two $\beta$-roots of \eqref{polybet} are given by
\[
\beta = (\kappa_2^+ - s^2)^{-1} \Big( \lambda s + \ii h''(J^+) \eta^+(\xim) \pm \Xi(\lambda,\xim)^{1/2}\Big).
\]
To select the branch of the square root, we recall that the stable eigenvalue $\beta = \beta(\lambda,\xim)$ is continuous and $\Re \beta < 0$ in $\Gamma^+$. If $\xim = 0$ then $\omega^+(0) = \eta^+(0) = \zeta^+(0) = 0$ and $\Xi(\lambda,0)^{1/2} = (\kappa_2^+ \lambda^2)^{1/2}$ is continuous in $\Re \lambda > 0$. Hence, we may select $\Xi(\lambda,0)^{1/2} = \sqrt{\kappa_2^+} \lambda$ as the principal branch. Since $\kappa_2^+ > s^2$ (Lax conditions) and $\Re \lambda > 0$, the stable root at $(\lambda,0)$ is
\[
\beta(\lambda,0) = - \frac{\lambda}{\sqrt{\kappa_2^+} + s}.
\]
Consequently, the branch we select for the stable root is
\begin{equation}
\label{betain}
\beta(\lambda,\xim) = (\kappa_2^+ - s^2)^{-1} \Big( \lambda s + \ii h''(J^+) \eta^+(\xim) - \Xi(\lambda,\xim)^{1/2}\Big).
\end{equation}
We introduce here the following mapping in the frequency space,
\begin{equation}
\label{defgamma}
\left\{
\begin{aligned}
\Psi(\lambda,\xim) &:= (\gamma(\lambda,\xim), \xim),\\
\Psi : \Gamma^+ &\mapsto \C \times \R^{d-1}, \\
\gamma(\lambda,\xim) &:= \frac{1}{\sqrt{\kappa_2^+ - s^2}} \left( \lambda \sqrt{\kappa_2^+} + \ii \frac{s h''(J^+) \eta^+(\xim)}{\sqrt{\kappa_2^+}}\right).
\end{aligned}
\right.
\end{equation}
The goal is to express the Lopatinski\u{\i} determinant \eqref{fourthstep} as well as the stable eigenvalue \eqref{betain} in terms of the new frequency variables $(\gamma,\xim)$. 

\begin{lemma}
\label{lemgoodmap}
The frequency mapping $\Psi : (\lambda,\xim) \mapsto (\gamma,\xim)$ is injective and maps $\Gamma^+$ onto the set
\begin{equation}
\widetilde{\Gamma}^+ := \left\{ (\gamma,\xim) \in \C \times \R^{d-1} \, : \, \Re \gamma > 0, \, \left| \sqrt{(\kappa_2^+)^{-1}(\kappa_2^+ - s^2)} \, \gamma - \ii (\kappa_2^+)^{-1} s h''(J^+) \eta^+(\xim) \right|^2 + |\xim|^2 = 1 \right\}.
\end{equation}
\end{lemma}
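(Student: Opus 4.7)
My plan is to exploit the fact that for each fixed $\xim \in \R^{d-1}$ the map $\lambda \mapsto \gamma(\lambda,\xim)$ is affine in $\lambda$ with a strictly positive real leading coefficient. Concretely, since $\eta^+(\xim) \in \R$ (being a real linear combination of the real scalars $\theta_{1j}^+$) and the Lax entropy conditions \eqref{Laxece1v2} guarantee $\kappa_2^+ - s^2 > 0$, definition \eqref{defgamma} may be rewritten as
\[
\gamma = c_1 \lambda + \ii c_2 \eta^+(\xim), \qquad c_1 := \sqrt{\tfrac{\kappa_2^+}{\kappa_2^+ - s^2}} > 0, \quad c_2 := \tfrac{s\, h''(J^+)}{\sqrt{\kappa_2^+(\kappa_2^+ - s^2)}} \in \R,
\]
whose explicit inverse is
\[
\lambda = c_1^{-1} \gamma - \ii \tfrac{c_2}{c_1} \eta^+(\xim) = \sqrt{(\kappa_2^+)^{-1}(\kappa_2^+ - s^2)}\,\gamma - \ii (\kappa_2^+)^{-1} s\, h''(J^+)\, \eta^+(\xim).
\]
This makes $\Psi$ a bijection at the level of the complex variable $\lambda$ (for each fixed $\xim$), and therefore injective on $\Gamma^+$.

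Next I would check that the defining conditions of $\Gamma^+$ and $\widetilde{\Gamma}^+$ match up under $\Psi$. Because the additive shift $\ii c_2 \eta^+(\xim)$ is purely imaginary and $c_1 > 0$, the identity $\Re \gamma = c_1 \Re \lambda$ yields $\Re\lambda > 0 \iff \Re\gamma > 0$. Substituting the inverse formula for $\lambda$ into the normalization $|\lambda|^2 + |\xim|^2 = 1$ produces exactly the equation
\[
\bigl| \sqrt{(\kappa_2^+)^{-1}(\kappa_2^+ - s^2)}\,\gamma - \ii (\kappa_2^+)^{-1} s\, h''(J^+)\, \eta^+(\xim) \bigr|^2 + |\xim|^2 = 1
\]
that defines $\widetilde{\Gamma}^+$, which proves the inclusion $\Psi(\Gamma^+) \subseteq \widetilde{\Gamma}^+$.

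For the reverse inclusion, given any $(\gamma,\xim) \in \widetilde{\Gamma}^+$ I would use the explicit inverse displayed above to define a candidate $\lambda$; the equivalences already established then guarantee simultaneously that $\Re \lambda > 0$, that $|\lambda|^2 + |\xim|^2 = 1$, and that $\Psi(\lambda,\xim) = (\gamma,\xim)$, so that $(\lambda,\xim) \in \Gamma^+$ and $\Psi$ is onto $\widetilde{\Gamma}^+$. No substantive obstacle is anticipated: this is essentially an algebraic change of variables, and the only care required is the bookkeeping of the constants $c_1, c_2$ and the observation that $\eta^+(\xim)$ is real-valued, which is precisely what allows the imaginary shift to preserve real parts while distorting the unit sphere $\{|\lambda|^2 + |\xim|^2 = 1\}$ into the warped sphere describing $\widetilde{\Gamma}^+$.
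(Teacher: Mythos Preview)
Your proof is correct and follows essentially the same approach as the paper's: both recognize that $\Psi$ is, for each fixed $\xim$, an affine bijection in $\lambda$ with positive real scaling, so that $\Re\gamma = c_1\Re\lambda$ and the normalization condition transforms as stated. The only cosmetic difference is that the paper verifies invertibility by computing the Jacobian of $\Psi$ viewed as a smooth map $\R^{d+1}\to\R^{d+1}$, whereas you write down the explicit affine inverse $\lambda = c_1^{-1}\gamma - \ii(c_2/c_1)\eta^+(\xim)$ directly; your version is arguably cleaner and more self-contained, since it makes the surjectivity onto $\widetilde{\Gamma}^+$ immediate rather than leaving it implicit.
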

\begin{proof}
Seen as a mapping from $(\Re \lambda, \Im \lambda, \xim^{\, \top}) \in \R^{d+1}$ to $\R^{d+1}$, $\Psi$ is of class $C^\infty$ and its Jacobian has the following structure
\[
D_{(\lambda,\xim)} \Psi = \begin{pmatrix} 
\frac{\sqrt{\kappa_2^+}}{\sqrt{\kappa_2^+ - s^2}}\mathbb{I}_2 & * \\ 0 & \mathbb{I}_{d-1} 
\end{pmatrix},
\]
which is clearly invertible. Notice that
\[
\Re \gamma = \sqrt{\kappa_2^+ (\kappa_2^+ - s^2)^{-1}} \, \Re \lambda,
\]
and, therefore, $\Re \lambda > 0$ if and only if $\Re \gamma > 0$. Hence, we conclude that $\Psi(\Gamma^+) = \widetilde{\Gamma}^+$.
\end{proof}

Let us substitute \eqref{defgamma} into \eqref{betain}. After straightforward algebra, the result is the stable eigenvalue $\beta$ as a function of the new frequency variables:
\[
\beta(\gamma, \xim) = \frac{s}{\sqrt{\kappa_2^+ (\kappa_2^+ - s^2)}} \left[ \gamma + \ii \frac{\sqrt{\kappa_2^+-s^2}}{s \sqrt{\kappa_2^+}} h''(J^+) \eta^+(\xim)  - \frac{\sqrt{\kappa_2^+}}{s} \Big( \gamma^2 + \zeta^+(\xim)\Big)^{1/2}\right].
\] 
Use $\kappa_2^+ = \mu + h''(J^+) \theta_{11}^+$ to obtain
\begin{equation}
\label{nbetaminus}
\beta - \ii \frac{\eta^+(\xim)}{\theta_{11}^+} = \frac{s}{\sqrt{\kappa_2^+ (\kappa_2^+ - s^2)}} \left[ \gamma - \frac{\sqrt{\kappa_2^+}}{s} \Big( \gamma^2 + \zeta^+(\xim)\Big)^{1/2} - \ii \frac{\mu \eta^+(\xim)}{\theta_{11}^+} \frac{\sqrt{\kappa_2^+ - s^2}}{s\sqrt{\kappa_2^+}} \right].
\end{equation}
Substitution of last expression into the first version of the Lopatinski\u{\i} determinant, equation \eqref{fourthstep}, yields
\begin{align*}
\frac{\ii}{\alpha} \widehat{\widehat{\Delta}}(\gamma,\xim) := \frac{\ii}{\alpha} {\widehat{\Delta}} (\lambda(\gamma,\xim),\xim) &= \frac{s^2 \theta_{11}^+}{\kappa_2^+} \left[ \gamma - \frac{\sqrt{\kappa_2^+}}{s} \Big( \gamma^2 + \zeta^+(\xim)\Big)^{1/2} - \ii \frac{\mu \eta^+(\xim)}{\theta_{11}^+} \frac{\sqrt{\kappa_2^+ - s^2}}{s\sqrt{\kappa_2^+}}\right]^2  + \\ & \;\;+ \rho(\alpha) \big( \theta_{11}^+ N^+(\xim)^2 - \eta^+(\xim)^2\big)\\
&= \frac{s^2 \theta_{11}^+}{\kappa_2^+} \left[ \left( \gamma - \frac{\sqrt{\kappa_2^+}}{s} \Big( \gamma^2 + \zeta^+(\xim)\Big)^{1/2} + \ii \tau^+ \eta^+(\xim)\right)^2 + \frac{\kappa_2^+}{s^2 \theta_{11}^+}\rho(\alpha) P^+(\xim) \right],
\end{align*}
where
\begin{equation}
\label{deftaup}
\tau^+ := - \, \frac{\mu \, \sqrt{\kappa_2^+ - s^2}}{s \sqrt{\kappa_2^+} \theta_{11}^+} > 0.
\end{equation}
Notice that $\tau^+$ is a positive constant (recall that $s < 0$) depending only on the parameters of the shock. $P^+(\xim)$ is defined in \eqref{signNeta}. Therefore, we have proved the following lemma.
\begin{lemma}[Lopatinski\u{\i} determinant, version 2]
\label{lemLopdetv2}
The Lopatinski\u{\i} determinant \eqref{fourthstep} can be rewritten and normalized as
\begin{equation}
\label{Lopadetv2}
\widetilde{\Delta}(\gamma,\xim) := \frac{\kappa_2^+}{s^2 \theta_{11}^+} \, \frac{\ii}{\alpha} \widehat{\widehat{\Delta}} (\gamma,\xim) = \left( \gamma - \frac{\sqrt{\kappa_2^+}}{s} \Big( \gamma^2 + \zeta^+(\xim)\Big)^{1/2} + \ii \tau^+ \eta^+(\xim)\right)^2 + \frac{\rho(\alpha)\kappa_2^+}{s^2\theta_{11}^+} P^+(\xim),
\end{equation}
for $(\gamma,\xim) \in \widetilde{\Gamma}^+$. It encodes the same stability information in the sense that $\widetilde{\Delta} = 0$ in $\widetilde{\Gamma}^+$ if and only if $\widehat{\Delta} = 0$ in $\Gamma^+$. Moreover, by continuity and thanks to the properties of the mapping ($\lambda,\xim) \mapsto (\gamma,\xim)$ (see Lemma \ref{lemgoodmap}), $\widetilde{\Delta}$ has a zero with $\gamma \in \ii \R$ if and only if $\widehat{\Delta}$ has a zero with $\lambda \in \ii \R$.
\end{lemma}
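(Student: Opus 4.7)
The plan is to effect the change of variables $(\lambda,\xim)\mapsto(\gamma,\xim)$ defined in \eqref{defgamma} directly on the first version \eqref{fourthstep} of the Lopatinski\u{\i} determinant, and then to rearrange the result so that the coefficient of $\ii\eta^+(\xim)$ inside the square becomes precisely the constant $\tau^+$ given in \eqref{deftaup}. Lemma \ref{lemgoodmap} already furnishes the bijectivity $\Psi(\Gamma^+)=\widetilde{\Gamma}^+$, so the substitution is legitimate, and since the prefactor $(\ii/\alpha)\,\kappa_2^+/(s^2\theta_{11}^+)$ is a nonzero constant, multiplying $\widehat{\widehat{\Delta}}$ by it changes neither the zero set nor its location.

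The key computation is the one already produced in equations \eqref{betain}--\eqref{nbetaminus}: starting from the expression for the unique stable root $\beta$, completing the square in $\lambda$ inside the discriminant $\Xi(\lambda,\xim)$, and then writing $\lambda$ in terms of $\gamma$ yields
\[
\beta-\ii\frac{\eta^+(\xim)}{\theta_{11}^+} \;=\; \frac{s}{\sqrt{\kappa_2^+(\kappa_2^+-s^2)}}\left[\gamma-\frac{\sqrt{\kappa_2^+}}{s}\bigl(\gamma^2+\zeta^+(\xim)\bigr)^{1/2}-\ii\,\frac{\mu\,\eta^+(\xim)}{\theta_{11}^+}\frac{\sqrt{\kappa_2^+-s^2}}{s\sqrt{\kappa_2^+}}\right],
\]
where in isolating the purely imaginary contribution to the last term one uses the identity $\kappa_2^+-\mu=h''(J^+)\theta_{11}^+$ supplied by \eqref{defkap2p}. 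Squaring this and inserting it into \eqref{fourthstep}, the leading prefactor $(\kappa_2^+-s^2)\theta_{11}^+$ and the outer factor $s^2/(\kappa_2^+(\kappa_2^+-s^2))$ combine to $s^2\theta_{11}^+/\kappa_2^+$; after the normalization $\widetilde{\Delta}=(\kappa_2^+/(s^2\theta_{11}^+))(\ii/\alpha)\widehat{\widehat{\Delta}}$, one recognizes the coefficient of $\ii\eta^+(\xim)$ as exactly $\tau^+$ from \eqref{deftaup} and the second summand as $(\rho(\alpha)\kappa_2^+/(s^2\theta_{11}^+))P^+(\xim)$. This delivers \eqref{Lopadetv2}.

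For the two equivalence statements, note that $(\ii/\alpha)\,\kappa_2^+/(s^2\theta_{11}^+)$ is a nonzero complex scalar, so $\widetilde{\Delta}(\gamma,\xim)=0$ iff $\widehat{\widehat{\Delta}}(\gamma,\xim)=0$, which by the bijection of Lemma \ref{lemgoodmap} is iff $\widehat{\Delta}(\Psi^{-1}(\gamma,\xim))=0$ in $\Gamma^+$. For zeroes on the imaginary axis, the identity $\Re\gamma=\sqrt{\kappa_2^+/(\kappa_2^+-s^2)}\,\Re\lambda$ (valid because $\eta^+(\xim)\in\R$) shows that $\Re\lambda=0$ iff $\Re\gamma=0$; by the continuity of $\Psi$ and of $\widehat{\Delta}$, $\widehat{\Delta}$ extends to $\partial\Gamma^+$ and correspondingly $\widetilde{\Delta}$ extends to $\partial\widetilde{\Gamma}^+$, and the correspondence of boundary zeroes follows.

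The only nontrivial step is the algebraic bookkeeping in the middle paragraph: the coefficient of $\ii\eta^+$ inside the bracket must reduce to $\tau^+$, which hinges on the cancellation $(\kappa_2^+-\mu)/\theta_{11}^+=h''(J^+)$. Everything else is routine substitution and a multiplicative renormalization.
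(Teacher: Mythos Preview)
Your proposal is correct and follows essentially the same approach as the paper: the paper likewise substitutes the expression \eqref{nbetaminus} for $\beta-\ii\eta^+(\xim)/\theta_{11}^+$ (obtained via the change of variables \eqref{defgamma} and the identity $\kappa_2^+=\mu+h''(J^+)\theta_{11}^+$) into version~1 of the Lopatinski\u{\i} determinant \eqref{fourthstep}, combines the prefactors, and identifies $\tau^+$ and the $\rho(\alpha)P^+(\xim)$ term. The equivalence of zero sets via the nonzero scalar factor and the bijection of Lemma~\ref{lemgoodmap}, together with the relation $\Re\gamma=\sqrt{\kappa_2^+/(\kappa_2^+-s^2)}\,\Re\lambda$, is exactly how the paper handles the stability-information and imaginary-axis claims.
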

As a first consequence of the expression for the Lopatinski\u{\i} determinant \eqref{Lopadetv2} we have the following
\begin{corollary}[one-dimensional stability]
\label{cor1dstab}
For every Hadamard energy function of the form \eqref{Hadamardmat} satisfying \textcolor{red}{\eqref{H1} -- \eqref{H3}}, all classical shock fronts are uniformly stable with respect to one-dimensional perturbations. In particular, the Lopatinski\u{\i} determinant \eqref{trueLopdet} behaves for $\xim = 0$ as
\[
\frac{\ii}{\alpha} \widehat{\Delta}(\lambda,0) = \theta_{11}^+ \frac{\sqrt{\kappa_2^+} -s}{\sqrt{\kappa_2^+}+ s} \lambda^2 \neq 0,
\]
for any $(\lambda,0)  \in \Gamma^+$.
\end{corollary}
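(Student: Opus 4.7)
The plan is to take the first version of the Lopatinski\u{\i} determinant given in Lemma \ref{lemLopdetv1} and specialize it to $\xim = 0$. By definition, $\eta^+(0) = 0$ and $N^+(0)^2 = 0$, hence $P^+(0) = \theta_{11}^+ N^+(0)^2 - \eta^+(0)^2 = 0$. Therefore the $\rho(\alpha)$-term in \eqref{fourthstep} disappears and the identity collapses to
\[
\frac{\ii}{\alpha} \widehat{\Delta}(\lambda,0) = (\kappa_2^+ - s^2)\, \theta_{11}^+ \, \beta(\lambda,0)^2.
\]

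Next, I would compute $\beta(\lambda,0)$ explicitly. This has already been recorded in the discussion leading up to \eqref{betain}: when $\xim = 0$, the characteristic polynomial \eqref{polybet} factors as $(\sqrt{\kappa_2^+}\,\beta - \lambda - \beta s)(\sqrt{\kappa_2^+}\,\beta + \lambda + \beta s) = 0$ after using $\kappa_2^+ - s^2 > 0$ (a Lax condition), and the branch selected by continuity with $\Re\beta < 0$ is precisely
\[
\beta(\lambda,0) = -\,\frac{\lambda}{\sqrt{\kappa_2^+} + s}.
\]

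Substituting and applying the difference-of-squares identity $\kappa_2^+ - s^2 = (\sqrt{\kappa_2^+} - s)(\sqrt{\kappa_2^+} + s)$ yields
\[
\frac{\ii}{\alpha} \widehat{\Delta}(\lambda,0) = \theta_{11}^+ \, \frac{(\sqrt{\kappa_2^+} - s)(\sqrt{\kappa_2^+} + s)}{(\sqrt{\kappa_2^+} + s)^2}\, \lambda^2 = \theta_{11}^+ \, \frac{\sqrt{\kappa_2^+} - s}{\sqrt{\kappa_2^+} + s}\, \lambda^2,
\]
which is the claimed formula.

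Finally, the non-vanishing on $\Gamma^+$ is immediate: $\theta_{11}^+ = |V_1^+|^2 > 0$; by the Lax conditions \eqref{Laxece1v2} one has $s < -\sqrt{\mu} < 0$ and $s^2 < \kappa_2^+$, so both factors $\sqrt{\kappa_2^+} \pm s$ are strictly positive; and for $(\lambda,0) \in \Gamma^+$ the normalization $|\lambda|^2 + |\xim|^2 = 1$ forces $|\lambda| = 1$, hence $\lambda^2 \neq 0$. No step is genuinely delicate here; the only thing to be careful about is the selection of the correct branch of $\beta(\lambda,0)$ (the other root has $\Re\beta > 0$ and would violate the stable bundle condition), but this selection has already been justified in the paragraph preceding \eqref{betain}.
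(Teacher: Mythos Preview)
Your proof is correct and somewhat more direct than the paper's. You work with version~1 of the Lopatinski\u{\i} determinant (Lemma~\ref{lemLopdetv1}) and plug in the explicit stable root $\beta(\lambda,0) = -\lambda/(\sqrt{\kappa_2^+}+s)$, which was already computed in the passage preceding \eqref{betain}. The paper instead uses version~2 (Lemma~\ref{lemLopdetv2}): it passes to the transformed frequency variable $\gamma$ via the map $\Psi$ of \eqref{defgamma}, evaluates $\widetilde{\Delta}(\gamma,0)$ from \eqref{Lopadetv2}, and then pulls the result back through the relation $(\ii/\alpha)\widehat{\Delta}(\lambda,0) = s^2\theta_{11}^+\widetilde{\Delta}(\gamma(\lambda,0),0)/\kappa_2^+$. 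Both routes land on the same formula after the same difference-of-squares simplification. Your approach is shorter because it avoids the frequency remapping altogether; the paper's route, on the other hand, serves as a first sanity check of the $(\gamma,\xim)$ framework that is essential for the subsequent analysis of purely imaginary zeroes.
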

\begin{proof}
Set $\xim = 0$ and $(\gamma,0) \in \widetilde{\Gamma}^+$. Then $\Re \gamma > 0$ and $|\gamma|^2 = \kappa_2^+ /(\kappa_2^+ - s^2)$. This implies that
\[
\gamma = \frac{\sqrt{\kappa_2^+}}{\sqrt{\kappa_2^+ - s^2}} e^{\\i \upsilon}, \qquad \upsilon \in [0,2 \pi).
\]
Since $\zeta^+(0) = \eta^+(0) = P^+(0) = 0$ we have, upon substitution into \eqref{Lopadetv2},
\[
\widetilde{\Delta}(\gamma,0) = \frac{\kappa_2^+}{s^2} \frac{\sqrt{\kappa_2^+} -s}{\sqrt{\kappa_2^+}+ s} e^{\ii 2 \upsilon}.
\]
In view of the frequency transformation \eqref{defgamma} and the relation $(\ii/\alpha) \widehat{\Delta}(\lambda,0) = s^2 \theta_{11}^+ \widetilde{\Delta}(\gamma(\lambda,0),0)/\kappa_2^+$ we obtain the result for all $(\lambda,0) = (e^{\ii \upsilon},0) \in \Gamma^+$.
\end{proof}

\begin{remark}\label{zetapos}
Note that the behavior of the Lopatinski\u{\i} determinant in \eqref{Lopadetv2} strongly depends on the sign of $\zeta^+(\xim)$ because it determines the branches of the square root. Hence, it is worth observing that $\zeta^+(\xim) > 0$ for all $\xim \neq 0$ and $\zeta^+(0) = 0$ if and only if $\xim = 0$. Indeed, use \eqref{omeeta}, \eqref{defNp} and \eqref{defzetap} to recast $\zeta^+(\xim)$ as
\[
\begin{aligned}
 \zeta^+(\xim)&=\omega^+(\xim)-\frac{h''(J^+) ^2}{\kappa_2^+} \eta^+(\xim)^2\\
&=\mu|\xim|^2+h''(J^+)N^+(\xim)^2-\frac{h''(J^+)^2}{\kappa_2^+} \eta^+(\xim)^2\\
&=\mu|\xim|^2+\frac{h''(J^+)}{\theta_{11}^+}P^+(\xim)+\Big(1-\frac{\theta_{11}^+ h''(J^+)}{\kappa_2^+}\Big)\frac{h''(J^+)}{\theta_{11}^+} \eta^+(\xim)^2\\
&=\mu|\xim|^2+\frac{h''(J^+)}{\theta_{11}^+}P^+(\xim)+\frac{\mu h''(J^+)}{\kappa_2^+\theta_{11}^+} \eta^+(\xim)^2.
\end{aligned}
\]
Since $P^+(\xim) \geq 0$, Lemma \ref{lemauxsign}, $\mu>0$ and $h''>0$ (condition \eqref{H2}) we arrive at the conclusion.
\end{remark}

Notably, $\zeta^+(\xim)$ remains positive if we substract a suitable frequency expression depending on $\tau^+$. This is a useful property to locate the zeroes of the Lopatinski\u{\i} determinant along the imaginary axis.
\begin{lemma}
\label{lemtau1}
For every $\xim \in \R^{d-1}$ there holds,
\[
\zeta^+(\xim)-\big(\tau^+\eta^+(\xim)\big)^2=\mu|\xim|^2+\frac{h''(J^+)}{\theta_{11}^+}P^+(\xim)+\frac{\mu(s^2-\mu)}{s^2(\theta_{11}^+)^2}\eta^+(\xim)^2 \geq 0.
\]
Moreover, equality holds if and only if $\xim = 0$.
\end{lemma}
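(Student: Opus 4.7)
The plan is to reduce the equality to a direct algebraic manipulation starting from the alternate form of $\zeta^+(\xim)$ already established in Remark \ref{zetapos}. That remark gives
\[
\zeta^+(\xim) = \mu|\xim|^2 + \frac{h''(J^+)}{\theta_{11}^+}P^+(\xim) + \frac{\mu h''(J^+)}{\kappa_2^+\theta_{11}^+}\,\eta^+(\xim)^2,
\]
so the problem reduces to rewriting the coefficient of $\eta^+(\xim)^2$ after subtracting $(\tau^+)^2\eta^+(\xim)^2$. From the definition \eqref{deftaup} of $\tau^+$ one computes
\[
(\tau^+)^2 = \frac{\mu^2(\kappa_2^+ - s^2)}{s^2\kappa_2^+(\theta_{11}^+)^2}.
\]

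The key step is then to simplify
\[
\frac{\mu h''(J^+)}{\kappa_2^+\theta_{11}^+} - \frac{\mu^2(\kappa_2^+ - s^2)}{s^2\kappa_2^+(\theta_{11}^+)^2}
\]
and to show it equals $\mu(s^2-\mu)/(s^2(\theta_{11}^+)^2)$. This is done by factoring out $\mu/(\kappa_2^+\theta_{11}^+)$ and using the identity $h''(J^+)\theta_{11}^+ = \kappa_2^+ - \mu$ (which is just the definition \eqref{defkap2p} of $\kappa_2^+$). After clearing denominators inside the bracket one gets $s^2(\kappa_2^+-\mu) - \mu(\kappa_2^+-s^2) = \kappa_2^+(s^2-\mu)$, and the factor of $\kappa_2^+$ cancels to yield exactly the desired coefficient. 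Substituting back gives the claimed identity.

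For the sign and equality statement, I would argue term by term. Lax entropy conditions \eqref{Laxece1v2} force $s < -\sqrt{\mu}$, so in particular $s^2 - \mu > 0$; combined with $\mu > 0$ and $\theta_{11}^+ = |V_1^+|^2 > 0$, the coefficient of $\eta^+(\xim)^2$ is strictly positive. By hypothesis \eqref{H2}, $h''(J^+) > 0$, and by Lemma \ref{lemauxsign}, $P^+(\xim) \geq 0$. Hence each of the three summands is nonnegative, proving $\zeta^+(\xim) - (\tau^+\eta^+(\xim))^2 \geq 0$. For equality, it is enough to note that the first summand $\mu|\xim|^2$ already vanishes only when $\xim = 0$, in which case all three terms vanish simultaneously. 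Conversely, $\xim = 0$ yields equality, which finishes the proof. The computation is entirely routine; the only point worth stressing is that the strict inequality $s^2 > \mu$ is precisely what makes the $\eta^+(\xim)^2$-coefficient positive rather than just nonnegative, which is consistent with the role this lemma will play in locating zeros of the Lopatinski\u{\i} determinant on the imaginary axis.
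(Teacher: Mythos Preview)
Your proof is correct and follows essentially the same approach as the paper: start from the expression for $\zeta^+(\xim)$ in Remark~\ref{zetapos}, subtract $(\tau^+)^2\eta^+(\xim)^2$, and simplify the $\eta^+(\xim)^2$-coefficient using $h''(J^+)\theta_{11}^+ = \kappa_2^+ - \mu$. You simply spell out more explicitly the algebra and the termwise nonnegativity/equality argument that the paper summarizes as ``the conclusion now follows.''
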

\begin{proof}
Follows from Remark \ref{zetapos}, the definition of $\tau^+$ and straightforward algebra:
\[
\begin{aligned}
 \zeta^+(\xim)-\big(\tau^+\eta^+(\xim)\big)^2&=\mu|\xim|^2+\frac{h''(J^+)}{\theta_{11}^+}P^+(\xim)+\Big(\frac{\mu h''(J^+)}{\kappa_2^+\theta_{11}^+}-\frac{\mu^2(\kappa_2^+-s^2)}{s^2\kappa_2^+(\theta_{11}^+)^2}\Big)\eta^+(\xim)^2\\
&=\mu|\xim|^2+\frac{h''(J^+)}{\theta_{11}^+}P^+(\xim)+\frac{\mu(s^2-\mu)}{s^2(\theta_{11}^+)^2}\eta^+(\xim)^2.
\end{aligned}
\]
The conclusion now follows.
\end{proof}

We proceed with the investigation of the possible zeroes of the Lopatinski\u{\i} determinant along the imaginary axis, which are associated to the existence of surface waves. Let us consider a zero of $\widetilde{\Delta}$ of the form $(\ii t,\xim)$, with $t\in\R$. Let us define $Y(t,\xim):=\widetilde{\Delta}(\ii t,\xim)$ for $t\in \R$, and  now we find conditions under which $Y$ has real zeros for a fixed frequency $\xim\in\R^{d-1}\setminus\{0\}$. By Lemma \ref{lemtau1}, $\zeta^+(\xim)$ is positive for all $\xim\in\R^{d-1}\setminus\{0\}$, so let us first consider
\[
t \in \left( - \sqrt{\zeta^+(\xim)}, \sqrt{\zeta^+(\xim)} \right).
\]
In this case we can write
\[
Y(t,\xim)=\Big(-\frac{  {\sqrt{\kappa_2^+}}  }{s}\sqrt{\zeta^+(\xim)-t^2} + \ii \big(t+\tau^+\eta^+\big) \Big)^2+\frac{\rho(\alpha)\kappa_2^+}{s^2\theta_{11}^+}P^+(\xim).
\]
Supposing that $Y(t, \xim) = 0$, its imaginary part vanishes, yielding
\[
-2\frac{ {\sqrt{\kappa_2^+}} }{s} \Big(t+\tau^+\eta^+(\xim)\Big) \sqrt{\zeta^+(\xim)-t^2}  = 0.
\]
By hypothesis, $\sqrt{\zeta^+(\xim)-t^2}\neq0$. Hence the imaginary part vanishes only if $t = - \tau^+ \eta^+(\xim)$. Notice that $t = - \tau^+ \eta^+(\xim) \in (- \sqrt{\zeta^+},\sqrt{\zeta^+})$ in view of Lemma \ref{lemtau1}. However,
\[
\begin{aligned}
Y(-\tau^+\eta^+(\xim),\xim) &= \left(-\frac{  {\sqrt{\kappa_2^+}} }{s}\sqrt{\zeta^+(\xim)-(\tau^+\eta^+(\xim))^2}\right)^2+\frac{\rho(\alpha)\kappa_2^+}{s^2\theta_{11}^+}P^+(\xim)\\
&=\frac{\kappa_2^+}{s^2} \Big(\zeta^+(\xim)-(\tau^+\eta^+(\xim))^2+\frac{\rho(\alpha)}{\theta_{11}^+}P^+(\xim)\Big)\\
&=\frac{\kappa_2^+}{s^2} \left(\mu|\xim|^2+\big(h''(J^+)+\rho(\alpha)\big)\frac{P^+(\xim)}{\theta_{11}^+}+\frac{\mu(s^2-\mu)}{s^2(\theta_{11}^+)^2}\eta^+(\xim)^2 \right)\\
&=\frac{\kappa_2^+}{s^2} \left(\mu|\xim|^2+(s^2-\mu)\Big(\frac{1}{\theta_{11}^+}-\frac{\alpha}{J^+}\Big)\frac{P^+(\xim)}{\theta_{11}^+}+\frac{\mu(s^2-\mu)}{s^2(\theta_{11}^+)^2}\eta^+(\xim)^2\right),
\end{aligned}
\]
which is strictly positive for all  $\xim\in\R^{d-1}\setminus \{0\}$ because $\mu>0$, $s^2>\mu$, $P^+(\xim)>0$ and 
\[
\frac{1}{\theta_{11}^+}-\frac{\alpha}{J^+}=\frac{J^-}{\theta_{11}^+J^+}>0.
\] 
Therefore, we conclude that $Y$ does not vanish on the interval $(-\sqrt{\zeta^+},\sqrt{\zeta^+})$. Let us now consider
\[
|t| \geq \sqrt{\zeta^+(\xim)}.
\]
In this case we have
\[
\sqrt{-t^2+\zeta^+(\xim)} = \ii \; \sgn(t) \sqrt{t^2-\zeta^+(\xim)},
\]
and hence
\[
Y(t,\xim)=-\Big(t-\frac{ {\sqrt{\kappa_2^+}} }{s}\sgn(t)\sqrt{t^2-\zeta^+(\xim)}+\tau^+\eta^+(\xim)\Big)^2+\frac{\rho(\alpha)\kappa_2^+}{\theta_{11}^+s^2}P^+(\xim).
\]
Observe that $\eta^+(-\xim)=-\eta^+(\xim)$, $P(-\xim)=P(\xim)$ and $\zeta^+(-\xim)=\zeta^+(\xim)$. Thus, the following property holds, $Y(-t,\xim)=Y(t,-\xim)$, and we can assume without loss of generality that $t \geq \sqrt{\zeta^+} > 0$ for $\xim \neq 0$. In this case, $Y$ takes the form
\[
Y(t,\xim)=-\Big(t-\frac{ {\sqrt{\kappa_2^+}} }{s}\sqrt{t^2-\zeta^+(\xim)}+\tau^+\eta^+(\xim)\Big)^2+\frac{\rho(\alpha)\kappa_2^+}{\theta_{11}^+s^2}P^+(\xim), \qquad \xim\in\R^{d-1}\setminus\{0\}.
\]
A straightforward computation then yields
\[
\frac{\partial Y(t,\xim)}{\partial t}=-2\Big(t-\frac{ {\sqrt{\kappa_2^+}} }{s}\sqrt{t^2-\zeta^+(\xim)}+\tau^+\eta^+(\xim)\Big)\Big(1-\frac{ {\sqrt{\kappa_2^+}} }{s}\frac{t}{\sqrt{t^2-\zeta^+}}\Big).
\]
We readily observe that since $s < 0$ then the last factor is positive. In view of Lemma \ref{lemtau1} it follows that $|\tau^+\eta^+|< \sqrt{\zeta^+} \leq t$ and, hence, the first factor is also positive. This shows that $Y$ is strictly decreasing as a function of $t > \sqrt{\zeta^+}$ for all $\xim\in\R^{d-1}\setminus\{0\}$. Moreover, $Y$ behaves as 
\[
Y\approx-t^2\left(1-\frac{ {\sqrt{\kappa_2^+}} }{s}\right)^2 < 0,
\]
as $t \to +\infty$ and for fixed $\xim \neq 0$.

Consequently, $Y$ has a unique zero of the form $(t,\xim)$ with $t \geq \sqrt{\zeta^+}$ if and only if there exists at least one frequency $\xim_0 \neq 0$ such that 
\[
Y \left( \mathsmaller{\sqrt{\zeta^+(\xim_0)}},\xim_0 \right) \geq 0,
\]
yielding the condition
\[
\Big(\mathsmaller{\sqrt{\zeta^+(\xim_0)}}+\tau^+\eta^+(\xim_0)\Big)^2-\frac{\rho(\alpha)\kappa_2^+}{s^2\theta_{11}^+}P^+(\xim_0) \leq 0.
\]
Otherwise there are no purely imaginary zeroes. Note that if  $\rho(\alpha) \leq 0$ then the left hand side of last expression is strictly positive for all $\xim_0 \neq 0$ in view of Lemma \ref{lemauxsign}. On account of the homogenity of $\widetilde{\Delta}$ in $\xim$ we may assume $|\xim|=1$. We summarize the observations of this section into the following
\begin{lemma}[existence of purely imaginary zeroes]
\label{imzeros} If $\rho(\alpha) \leq 0$ then $\widetilde{\Delta}$ has no zeroes of the form $(\ii t,\xim)$ with $t\in\R$. Conversely, if $\rho(\alpha)>0$ then $\widetilde{\Delta}$ has at least one zero of the form $(\ii t,\xim)$ if and only if there exist at least one frequency $\xim_0 \neq 0$ such that
\begin{equation}
\label{crit1}
\Big(\mathsmaller{\sqrt{\zeta^+(\tilde{\xi}_0)}}+\tau^+\eta^+(\xim_0)\Big)^2-\frac{\rho(\alpha)\kappa_2^+}{s^2\theta_{11}^+}P^+(\xim_0)\leq 0.
\end{equation}
\end{lemma}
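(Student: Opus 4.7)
The plan is to restrict $\widetilde{\Delta}$ to the imaginary axis and analyze the function $Y(t,\xim) := \widetilde{\Delta}(\ii t,\xim)$ using the explicit form from Lemma \ref{lemLopdetv2}. Since $\zeta^+(\xim) > 0$ whenever $\xim \neq 0$ (Remark \ref{zetapos}), the square root $(\gamma^2 + \zeta^+(\xim))^{1/2}$ evaluated at $\gamma = \ii t$ behaves qualitatively differently on the open range $|t| < \sqrt{\zeta^+(\xim)}$ (where it stays real) and on $|t| \geq \sqrt{\zeta^+(\xim)}$ (where it becomes purely imaginary). I would therefore split the analysis into these two regimes and treat them independently, exploiting homogeneity in $\xim$ to normalize as convenient.

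In the first regime $|t| < \sqrt{\zeta^+(\xim)}$ the quantity $Y(t,\xim)$ is genuinely complex, and setting $\Im Y = 0$ forces $t = -\tau^+\eta^+(\xim)$; Lemma \ref{lemtau1} guarantees that this candidate lies strictly inside the admissible interval. Substituting back into $Y$ and then using Lemma \ref{lemtau1} to replace $\zeta^+-(\tau^+\eta^+)^2$, combined with the identity $h''(J^+) + \rho(\alpha) = (s^2-\mu)(1/\theta_{11}^+ - \alpha/J^+)$ that follows from \eqref{defrho}, recasts $Y(-\tau^+\eta^+,\xim)$ as a sum of nonnegative terms. Strict positivity then follows from $\mu > 0$, $s^2 > \mu$ by Lax, $P^+(\xim) \geq 0$ from Lemma \ref{lemauxsign}, and the crucial sign $1/\theta_{11}^+ - \alpha/J^+ = J^-/(\theta_{11}^+ J^+) > 0$. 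This rules out zeros in this region independently of the sign of $\rho(\alpha)$.

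In the second regime $|t| \geq \sqrt{\zeta^+(\xim)}$ the symmetries $\eta^+(-\xim)=-\eta^+(\xim)$ and $\zeta^+(-\xim)=\zeta^+(\xim)$, $P^+(-\xim) = P^+(\xim)$ give $Y(-t,\xim) = Y(t,-\xim)$, so I may restrict to $t \geq \sqrt{\zeta^+}$. Here $Y$ is real and a direct differentiation writes $\partial_t Y$ as a product of two factors; using $s<0$ and the bound $|\tau^+\eta^+| < \sqrt{\zeta^+} \leq t$ from Lemma \ref{lemtau1}, each factor is positive, so $Y$ is strictly decreasing on $[\sqrt{\zeta^+},\infty)$, and the asymptotic $Y \sim -t^2(1 - \sqrt{\kappa_2^+}/s)^2 \to -\infty$ as $t \to +\infty$ is immediate. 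Consequently $Y(\cdot,\xim)$ admits a zero on $[\sqrt{\zeta^+},\infty)$ if and only if $Y(\sqrt{\zeta^+(\xim_0)},\xim_0) \geq 0$ for some $\xim_0 \neq 0$, which unpacks to exactly \eqref{crit1}. When $\rho(\alpha) \leq 0$ the left side of \eqref{crit1} is a sum of nonnegative terms (with at least one strictly positive for $\xim_0 \neq 0$), ruling out all purely imaginary zeros. The main subtlety I expect is the positivity computation at the candidate $t = -\tau^+\eta^+$, which only becomes transparent after Lemma \ref{lemtau1} and the $\rho$-identity are used to collapse several terms into a manifestly positive combination.
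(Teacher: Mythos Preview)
Your proposal is correct and follows essentially the same approach as the paper: the paper also defines $Y(t,\xim)=\widetilde{\Delta}(\ii t,\xim)$, splits into the regimes $|t|<\sqrt{\zeta^+}$ and $|t|\geq\sqrt{\zeta^+}$, handles the first by forcing $t=-\tau^+\eta^+$ and showing strict positivity via Lemma~\ref{lemtau1} together with $1/\theta_{11}^+-\alpha/J^+=J^-/(\theta_{11}^+J^+)>0$, and handles the second by the symmetry $Y(-t,\xim)=Y(t,-\xim)$, monotonicity of $Y$, and the asymptotic $Y\sim -t^2(1-\sqrt{\kappa_2^+}/s)^2$, arriving at condition~\eqref{crit1} exactly as you describe.
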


\begin{remark}
\label{remimzeros}
From Theorem \ref{teoweakstab} we know that if $\rho(\alpha)\geq0$ then the shock is either weakly or strongly stable. Lemma \ref{imzeros} allows us to distinguish between the two cases. For instance, if the shock $(U^\pm, v^\pm,s)$ is such that $\rho(\alpha)=0$ then relation \eqref{crit1} is never satisfied for any frequency $\xim\in\R^{d-1}\setminus\{0\}$ and the shock is strongly stable (recall that $\zeta^+ > 0$ for $\xim \neq 0$ and, in view of Lemma \ref{lemtau1}, $\sqrt{\zeta^+} \geq |\tau^+ \eta^+| > 0$). When $\rho(\alpha)>0$ the stability is determined by the expression \eqref{crit1}, which can be considered as the condition for the transition from strong to weak stability.
\end{remark}

\subsection{The case $\rho(\alpha)<0$}

From Lemma \ref{imzeros} and Remark \ref{remimzeros}, we already know that $\widetilde{\Delta}$ has not purely imaginary roots when $\rho(\alpha)<0$. At the same time, Theorem \ref{teoweakstab} guarantees that if $\rho(\alpha) \geq 0$ then the shock is at least weakly stable and the transition from weak to strong stability is determined by condition \eqref{crit1}. Therefore, the only remaining task is to determine whether there exist zeroes of the form $(\gamma,\xim)$ with $\Re \gamma>0$ when $\rho(\alpha) < 0$. Following the proof of Theorem \ref{teoweakstab}, we exploit the fact that $\Re \beta <0$ in order to reduce the analysis to only one factor (a third version of the Lopatinski\u{\i} determinant) instead of the whole function $\widetilde{\Delta}$. Let us recall that
\[
\widetilde{\Delta}(\gamma,\xim) = \frac{\kappa_2^+}{s^2 \theta_{11}^+} \, \frac{\ii}{\alpha} \widehat{\widehat{\Delta}} (\gamma,\xim)= \frac{\kappa_2^+}{s^2 \theta_{11}^+} \, \frac{\ii}{\alpha} \widehat{\Delta} (\lambda(\gamma,\xim),\xim),
\]
so we come back to the expression of $\frac{\ii}{\alpha}\widehat{\Delta}$ defined in Lemma \ref{lemLopdetv1}, which can be written as:
\[
\begin{aligned}
\frac{\ii}{\alpha}\widehat{\Delta}(\lambda(\gamma,\xim),\xim)&=(\kappa_2^+-s^2)\theta_{11}^+\left(\Big(\beta(\lambda(\gamma,\xim),\xim) - \frac{\ii \eta^+(\xim)}{\theta_{11}^+}\Big)^2+\frac{\rho(\alpha)P^+(\xim)}{(\kappa_2^+-s^2)\theta_{11}^+}\right)\\
&=(\kappa_2^+-s^2)\theta_{11}^+\left(\Big(\beta - \frac{\ii \eta^+(\xim)}{\theta_{11}^+}\Big)^2-\delta^2\right)\\
&=(\kappa_2^+-s^2)\theta_{11}^+\left(\beta-\delta-\frac{\ii \eta^+(\xim)}{\theta_{11}^+}\right)\left(\beta+\delta-\frac{\ii \eta^+(\xim)}{\theta_{11}^+}\right),
\end{aligned}
\]
where now, with a slight abuse of notation,
\[
\delta = \sqrt{\frac{-\rho(\alpha)P^+(\xim)}{\theta_{11}^+(\kappa_2^+-s^2)}} > 0,
\]
in view that $\rho(\alpha)<0$. Except for the constant $(\kappa_2^+-s^2)\theta_{11}^+$, note that the real part of first factor in the expression of $\frac{\ii}{\alpha}\widehat{\Delta}$ is negative ($\Re \beta < 0$ in $\Gamma^+$ and, because of Lemma \ref{lemgoodmap}, $\Re \beta < 0$ in $\widetilde{\Gamma}^+$ as well). Hence, this factor never vanishes in $\widetilde{\Gamma}^+$. Necessarily, all possible zeroes $\gamma$ in $\widetilde{\Gamma}^+$ come from the last factor. Profiting from \eqref{nbetaminus}, we recast the latter as follows.
\begin{defin}[Lopatinski\u{\i} determinant, version 3]
In the case when $\rho(\alpha) < 0$, we define
\begin{equation}\label{subdet1}
\begin{split}
\Delta_1(\gamma,\xim):&=\frac{\sqrt{\kappa_2^+(\kappa_2^+-s^2)}}{s}\left(\beta + \delta - \frac{\ii \eta^+(\xim)}{\theta_{11}^+}\right)\\
&=\gamma-\frac{ {\sqrt{\kappa_2^+}} }{s}\sqrt{\gamma^2+\zeta^+(\xim)}+\ii \tau^+\eta^+(\xim) +\frac{ {\sqrt{\kappa_2^+}} }{s}\sqrt{\frac{-\rho(\alpha)P^+(\xim)}{\theta_{11}^+}}
\end{split}
\end{equation}
for each $(\gamma, \xim) \in \widetilde{\Gamma}^+$.
\end{defin}

From the preceding discussion, it suffices to study the zeroes of $\Delta_1$ on $\widetilde{\Gamma}^+$ to draw stability conclusions about the shock in the case $\rho(\alpha) < 0$. To that end, we apply the argument principle to count the number of roots of $\Delta_1$ in the right complex $\gamma$-half-plane. We proceed as in \cite{JL}, introducing polar coordinates $(R,\phi)$ and defining, for any fixed $\xim\neq0$, the function
\[
H(R,\phi)=H(w):=\Delta_1(w,\xim),\quad w=Re^{\ii \phi}.
\]

Consider $H(w)$ as $w$ varies counterclock-wise along the closed contour $\mathcal{C}$ consisting of a semicircle together with a vertical segment joining the ends; see Figure \ref{figcontour}. From Lemma \ref{imzeros} it is known that if $\rho(\alpha)<0$ then there are no roots of $\widetilde{\Delta}$ of the form $(\ii t,\xim)$ (and, consequently, of $\Delta_1$ as well). Therefore, the function $H$ does not have purely imaginary roots for any fixed $\xim \neq 0$ and we only have to avoid the branch cuts of the square root when we map this portion of the imaginary axis. We are interested in the behavior of the image of $\mathcal{C}$ under $H$ as $R\to\infty$. From expression \eqref{subdet1}, notice that the image of the circular portion for large $R$ behaves like
\[
H(R,\phi) \approx\left(1-\frac{s}{ {\sqrt{\kappa_2^+}} }\right) R e^{i\phi},
\]
as $R \to \infty$. Hence, the image is almost a circular portion too. Now we examine the mapping of the portion of $\mathcal{C}$ on the imaginary axis, that is, when $\phi=\pm \pi/2$. Substitution into \eqref{subdet1} yields
\[
H(R, \pm \tfrac{\pi}{2}) = \pm \ii R +\ii \tau^+\eta^+(\xim) + \frac{\sqrt{\kappa_2^+}}{s} \sqrt{\frac{-\rho(\alpha)P^+(\xim)}{\theta_{11}^+}} - \frac{\sqrt{\kappa_2^+}}{s} \cdot \begin{cases} \pm \ii \sqrt{R^2 - \zeta^+(\xim)}, & R^2 > \zeta^+(\xim), \\ \sqrt{\zeta^+(\xim)-R^2}, & R^2 \leq \zeta^+(\xim). \end{cases}
\]

\begin{figure}[t]
\begin{center}
\subfigure[Contour $\mathcal{C}$]{\label{figcontourC}\includegraphics[scale=.6, clip=true]{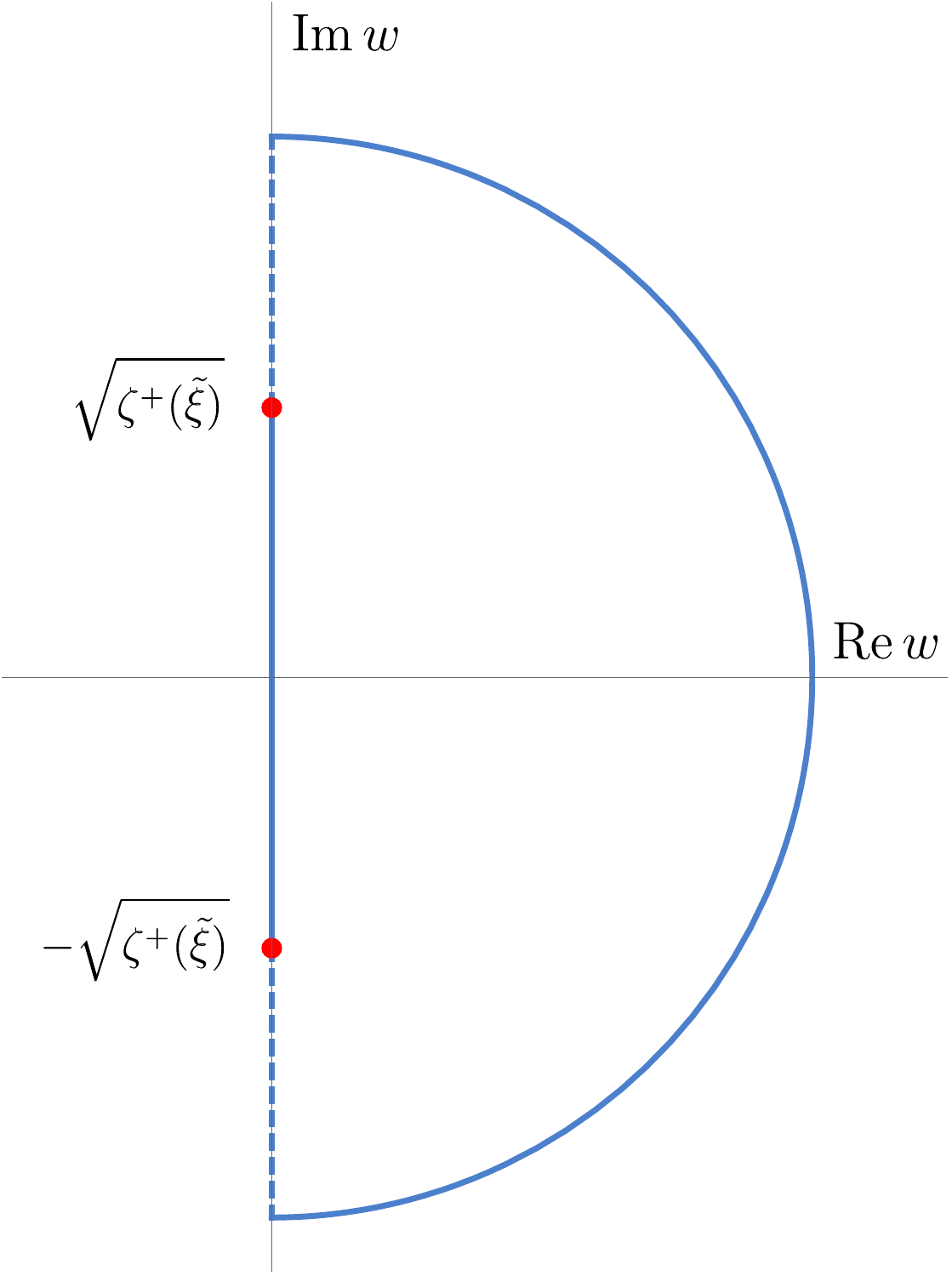}}
\subfigure[$H(\mathcal{C})$]{\label{figcontourHC}\includegraphics[scale=.6, clip=true]{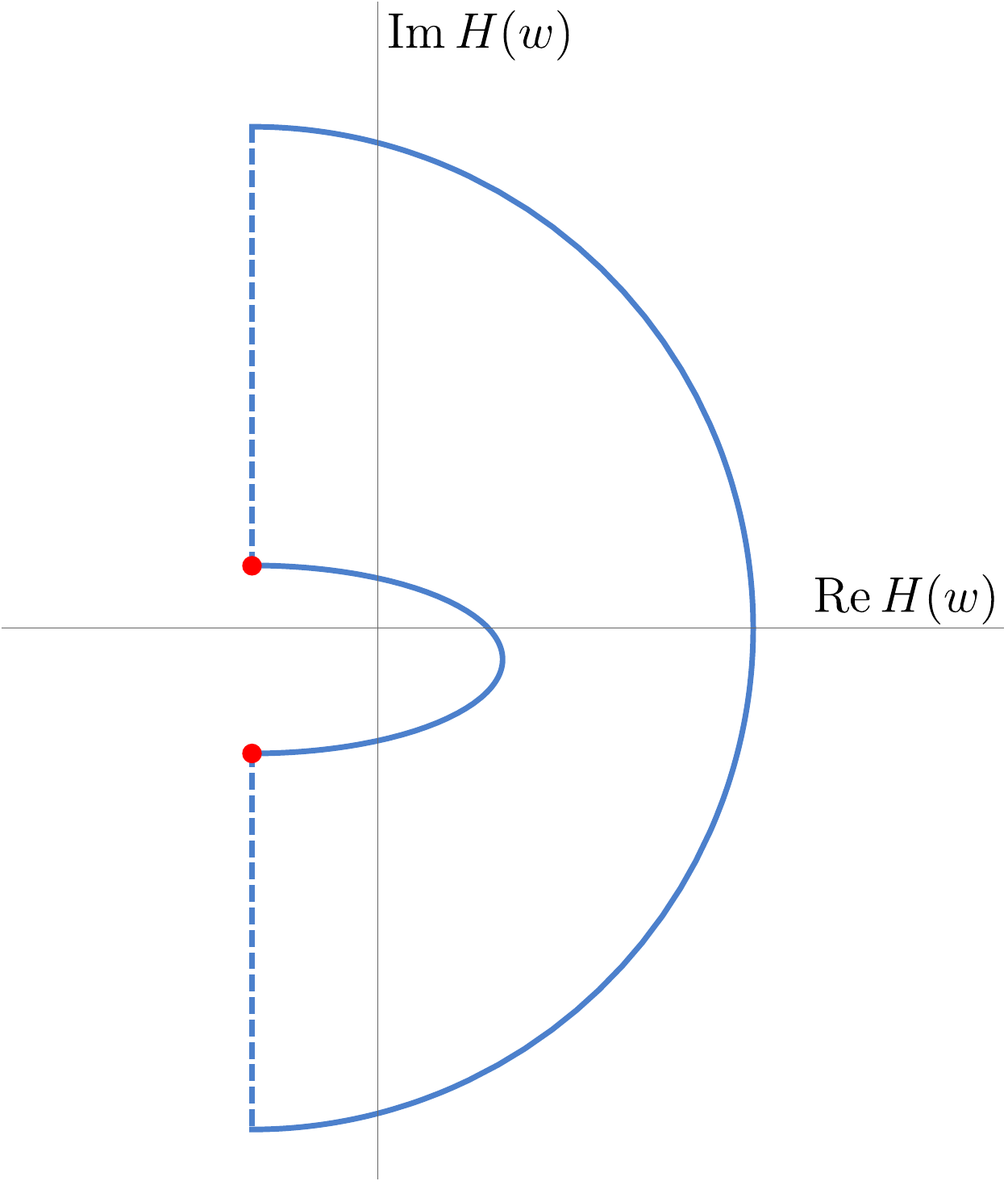}}
\end{center}
\caption{Illustration of the contour $\mathcal{C}$ in the $w$-complex plane (in blue; panel (a)) and of its image under the mapping $H$ (panel (b); color online).}\label{figcontour}
\end{figure}

Hence, $H$ maps the segment $(- \ii \mathsmaller{\sqrt{\zeta^+}}, \ii \mathsmaller{\sqrt{\zeta^+}})$ into the half right part of the following ellipse in the $XY$-plane,
\begin{equation}
\label{ellipse}
\left(-\frac{s}{\sqrt{\kappa_2^+}}X+\sqrt{\frac{-\rho(\alpha)P^+(\xim)}{\theta_{11}^+}}\right)^2+\Big(Y-\tau^+\eta^+(\xim)\Big)^2=\zeta^+(\xim),
\end{equation}
where $X=\Re H(w)$, $Y=\Im H(w)$. At the same time, $H$ maps the segment $(-\ii R,-\ii \mathsmaller{\sqrt{\zeta^+}})\cup (\ii \mathsmaller{\sqrt{\zeta^+}},\ii R)$ into the lines joining the upper an lower vertices of the ellipse with  points  $H(R,\frac{\pi}{2})$ and $H(R,-\frac{\pi}{2})$ respectively; see Figure \ref{figcontourHC}.

Note that the total change in the argument of $H$ on the contour $\mathcal{C}$ depends on whether or not the point $(X,Y) = (0,0)$ is inside the ellipse. Since $H$ has no purely imaginary zeros for all $\xim\neq0$, $(X,Y) = (0,0)$ does not lie on the ellipse in the $XY$-plane. It remains to check whether $(X,Y) = (0,0)$ is inside or outside the ellipse. For that purpose, we apply Lemma \ref{lemtau1} in order to write
\[
\big(\tau^+\eta^+(\xim)\big)^2=\zeta^+(\xim)-\Big(\mu|\xim|^2+\frac{h''(J^+)}{\theta_{11}^+}P^+(\xim)+\frac{\mu(s^2-\mu)}{s^2(\theta_{11}^+)^2}(\eta^+)^2\Big).
\]
Now if we substitute $X=0$, $Y=0$ into the right hand side of \eqref{ellipse} then we find that
\[
\begin{aligned}
\frac{-\rho(\alpha)P^+(\xim)}{\theta_{11}^+}+\big(\tau^+\eta^+(\xim)\big)^2 &= \zeta^+(\xim)-\Big(\mu|\xim|^2+(s^2-\mu)\Big(\frac{1}{\theta_{11}^+}-\frac{\alpha}{J^+}\Big)\frac{P^+(\xim)}{\theta_{11}^+}+\frac{\mu(s^2-\mu)}{s^2(\theta_{11}^+)^2}(\eta^+)^2\Big) \\ &< \zeta^+(\xim),
\end{aligned}
\]
for each $\xim \neq 0$. Hence, we conclude that the point $(X,Y) = (0,0)$ is inside the ellipse (or equivalently, it lies outside of the image of the contour under $H$, as illustrated in Figure  \ref{figcontourHC}). This implies that there is no change in the argument of $H(w)$ as $w$ varies counterclockwise along the closed contour $\mathcal{C}$ and that there are no roots with positive real part of $H$ for all $\xim\neq0$. The argument can be applied to any arbitrarily large radius $R > 0$. Therefore, as long as $\rho(\alpha)<0$, $\widetilde{\Delta}(\gamma,\xim)$ does not vanish for $\Re\gamma>0$. In view of Remark \ref{remimzeros}, we conclude that $\rho(\alpha)\leq 0$ \emph{is a sufficient condition for uniform (or strong) stability}. 

We summarize the last discussion and the precedent theorems into the following main result.

\begin{theo}[stability criteria]
\label{stabcriteria}
For a compressible hyperelastic Hadamard material satisfying assumptions \textcolor{red}{\eqref{H1} -- \eqref{H3}}, consider a classical (Lax) 1-shock with intensity $\alpha \neq 0$. 
\begin{itemize}
\item[(a)] If $\rho(\alpha)\leq0$ then the shock is \emph{uniformly stable}.
\item[(b)] In the case where $\rho(\alpha)>0$, the shock is \emph{uniformly stable} if and only if
\begin{equation}\label{critdef}
\Big(\mathsmaller{\sqrt{\zeta^+(\xim)}}+\tau^+\eta^+(\xim)\Big)^2-\frac{\rho(\alpha)\kappa_2^+}{s^2\theta_{11}^+}P^+(\xim)>0,\quad\text{for all}\:\:\xim \neq 0.
\end{equation}
Otherwise the shock is \emph{weakly stable}.
\end{itemize}
\end{theo}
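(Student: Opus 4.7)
The plan is to assemble the three partial results established above---Theorem \ref{teoweakstab}, Lemma \ref{imzeros}, and the contour/argument-principle analysis developed in the subsection treating the case $\rho(\alpha) < 0$---into a single dichotomy. Recall that uniform stability of the shock corresponds to the absence of zeroes of $\widetilde{\Delta}$ in the closure of $\widetilde{\Gamma}^+$ (i.e., for $\Re \gamma \geq 0$), whereas weak but non-uniform stability corresponds to zeroes occurring only on the imaginary boundary $\Re \gamma = 0$. Accordingly I would split the argument into two cases, $\rho(\alpha) \leq 0$ and $\rho(\alpha) > 0$, in each case accounting separately for the interior $\Re \gamma > 0$ and the boundary $\Re \gamma = 0$.

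For part (a), I would further subdivide. In the subcase $\rho(\alpha) = 0$, Theorem \ref{teoweakstab} applied with equality in \eqref{condstab1} rules out zeroes in the open set $\Gamma^+$ (equivalently, in $\widetilde{\Gamma}^+$, via Lemma \ref{lemgoodmap}), and the first clause of Lemma \ref{imzeros} forbids purely imaginary zeroes whenever $\rho(\alpha) \leq 0$; combining the two statements yields uniform stability. In the subcase $\rho(\alpha) < 0$, I would invoke the contour argument developed just before the statement: for any fixed $\xim \neq 0$ the function $H(w) = \Delta_1(w,\xim)$ maps the contour $\mathcal{C}$ to a curve enclosing the origin (because $(0,0)$ lies strictly inside the image ellipse \eqref{ellipse}), so the argument principle produces zero winding and hence no roots with $\Re w > 0$. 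Since, as argued immediately before the Lopatinski\u{\i} determinant version 3 was defined, any root of $\widetilde{\Delta}$ in $\widetilde{\Gamma}^+$ must arise from $\Delta_1$, this excludes interior roots; the first clause of Lemma \ref{imzeros} again excludes imaginary roots; and uniform stability follows.

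For part (b), the hypothesis $\rho(\alpha) > 0$ places us in the regime covered by Theorem \ref{teoweakstab}, so there are no zeroes of $\widetilde{\Delta}$ in the open set $\widetilde{\Gamma}^+$, i.e., the shock is at least weakly stable. Uniform stability is then equivalent to the additional absence of purely imaginary zeroes, and here the second clause of Lemma \ref{imzeros} supplies a sharp biconditional: such a zero exists if and only if there is some $\xim_0 \neq 0$ satisfying \eqref{crit1}. By homogeneity of $\widetilde{\Delta}$ in $\xim$ one may restrict to $|\xim| = 1$, and the negation of the existence statement is precisely the strict inequality \eqref{critdef} holding for all $\xim \neq 0$. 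Thus \eqref{critdef} is equivalent to uniform stability, while its failure at some $\xim_0$ leaves the shock weakly (but not uniformly) stable.

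The main obstacle---already overcome in the preceding subsection---is the contour argument in case (a) when $\rho(\alpha) < 0$, whose delicate point is verifying that the origin in the $XY$-plane is enclosed by the image ellipse \eqref{ellipse} uniformly in the radius $R$ and the frequency $\xim \neq 0$; once this geometric fact is established via Lemma \ref{lemtau1} and the identity $J^-/(\theta_{11}^+ J^+) > 0$, the remainder of the theorem follows by a direct combination of Theorem \ref{teoweakstab} and Lemma \ref{imzeros}, with no further computations required.
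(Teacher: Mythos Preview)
Your assembly of Theorem \ref{teoweakstab}, Lemma \ref{imzeros}, and the contour/argument-principle analysis is exactly how the paper proceeds; indeed the paper presents Theorem \ref{stabcriteria} as a summary of that preceding discussion rather than giving a separate formal proof. One slip to correct: in your recap of the $\rho(\alpha)<0$ case you write that $H(\mathcal{C})$ is ``a curve enclosing the origin'' and then conclude ``zero winding,'' which is self-contradictory. The paper's point is the opposite---the fact that $(0,0)$ lies inside the ellipse \eqref{ellipse} means it lies \emph{outside} the closed image curve $H(\mathcal{C})$ (the image traces only the right half of the ellipse plus segments and a large arc), so the winding number about the origin is zero and hence $\Delta_1$ has no roots with $\Re w>0$.
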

\begin{remark}
\label{remtransition}
Being the left hand side of \eqref{critdef} of order $O(|\xim|^2)$, in most cases it constitutes a quadratic form in $\xim$ and there exists a real matrix $L^+\in \R^{d \times d}$ depending only on the shock and material parameters (that is, independent of the frequencies $\xim \in \R^{d-1}$) such that, in those cases, the transition from weak to strong stability condition can be recast as follows: when $\rho(\alpha) > 0$ the shock is uniformly stable if and only if the matrix $L^+$ restricted to the $d-1$ dimensional space, $\{ (0, \xim) \, : \, \xim \in \R^{d-1} \} \subset \R^{d}$, is positive definite, that is, if $(0, \, \xim)^\top L^+ \left( \begin{smallmatrix} 0 \\ \xim \end{smallmatrix} \right)> 0$ for all $\xim \neq 0$. In other words, one can state the transition condition \eqref{critdef} in terms of the shock and material parameters alone, as in the case of gas dynamics (cf. \cite{BS,M1}). However, the general form of the matrix $L^+$ is convoluted and, in practice, it is more convenient to verify \eqref{critdef} directly (see, for instance, the example in section  \S \ref{secCG} below).
\end{remark}

\subsection{Applications}
\label{secappli}

In order to illustrate the theoretical results, in this section we examine a couple of specific energy density functions describing compressible Hadamard materials and determine the conditions for shock stability.

\subsubsection{Two-dimensional Ciarlet-Geymonat model}
\label{secCG}
We begin by considering, in two space dimensions $d = 2$, the following volumetric energy density proposed by Ciarlet and Geymonat \cite{CiGe82} (see Appendix \ref{haddimate}, \S \ref{subsecexamples}, equation \eqref{hCG} below),
\begin{equation}
\label{hCGd2}
h(J) = - \mu - \mu \log J + \Big( \frac{\kappa - \mu}{2}\Big) (J-1)^2,
\end{equation}
where $\mu$ and $\kappa$ are the (constant) shear and bulk moduli, respectively, satisfying $\kappa > \mu > 0$. Energies of the form \eqref{hCG} model nearly incompressible materials (that is, they are proposed for small deformations) and they satisfy the free stress condition \eqref{relhmu} and the hydrostatic pressure condition \eqref{relkapmu} of Pence and Gou \cite{PnGo15}. In other words, these models are compressible extensions of neo-Hookean materials. This two-dimensional version of the Ciarlet-Geymonat energy, \eqref{hCGd2}, has been proposed by Trabelsi \cite{Trab03} to describe nonlinear thin plate materials modeling \emph{flexural shells}.

Given a base state $(U^+, v^+) \in \R^{2 \times 2} \times \R^2$, a Lax shock is completely determined by the parameter $\alpha \in \R$ (see Lemma \ref{lemsimpRH}). It can be shown (see section \S \ref{subsecexamples} below) that
\[
h''(J) = \frac{\mu}{J^2} + \kappa - \mu > 0, \qquad h'''(J) = - \frac{2\mu}{J^3} < 0,
\]
for all $J \in (0,\infty)$. Thus, this energy density satisfies \eqref{H1} -- \eqref{H3}.  In view of Proposition \ref{proppadonde}, in order to have a classical shock front we need $\alpha < 0$. Notice that $|\alpha|$ can be arbitrarily large, meaning that the shock can be of arbitrary amplitude. According to our notation
\[
V_1^+ = (\Cof U^+)_1 = \begin{pmatrix} U_{22}^+ \\ -U_{12}^+ \end{pmatrix} \in \R^2.
\]
A straightforward calculation (which we leave to the dedicated reader) yields
\[
\rho(\alpha) = - (\kappa - \mu) \frac{|V_1^+|^2}{J^+} \alpha >0.
\]
Therefore, from Theorem \ref{teoweakstab} we know that all classical shocks with intensity $\alpha < 0$ are, at least, weakly stable. In order to examine condition \eqref{crit1} and the emergence of surface waves, we set, for simplicity, $U^+ = \Ido$ (undeformed base state). Thus,
\[
\begin{aligned}
V_1^+ &= (\Cof U^+)_1 = \hat{e}_1 \in \R^2, \quad \theta_{11}^+ = |V_1^+|^2 = 1,\\
U^- &= U^+ - \alpha (V_1^+ \otimes \hat{e}_1) = \Ido - \alpha (\hat{e}_1 \otimes \hat{e}_1) = \begin{pmatrix} 1-\alpha & 0 \\ 0 & 1 \end{pmatrix},\\
J^+ &= 1, \quad J^- = 1-\alpha > 1.
\end{aligned}
\]
This yields $\rho(\alpha) = - (\kappa - \mu) \alpha$. Since the physical dimension is $d = 2$, the Fourier frequency is $\xim = \xi_2 \in \R$ and $(\lambda, \xi_2) \in \Gamma^+ = \{ \Re \lambda > 0, \, |\lambda|^2 + \xi_2^2 = 1\}$. After straightforward computations the reader may verify that
\[
\begin{aligned}
\kappa_2^+ &= \mu + \kappa,\\
s^2 &= \kappa + \frac{\mu}{1-\alpha}, \quad \text{with } \; s < 0,\\
\eta^+(\xi_2) &= 0, \quad P^+(\xi_2) = \xi_2^2,\\
\zeta^+(\xi_2) = \omega^+(\xi_2) &= (\mu + \kappa) \xi_2^2.
\end{aligned}
\]
Upon substitution into the left hand side of \eqref{crit1}, we obtain
\[
(\mu + \kappa) \xi_2^2 \left( 1 + \frac{\alpha (1-\alpha)(\kappa - \mu)}{\mu + (1-\alpha)\kappa}\right).
\]
Thus, the sign is determined by the function
\[
L(\alpha) = 1 + \frac{\alpha (1-\alpha)(\kappa - \mu)}{\mu + (1-\alpha)\kappa}, \qquad \alpha  <0.
\]
Clearly, $L(\alpha) > 0$ for $\alpha \approx 0^-$. Therefore, when $\xi_2 \neq 0$ condition \eqref{critdef} holds for $\alpha < 0$ and $|\alpha|$ small and the shock is uniformly stable. It is easily verified that $L(\alpha_*) = 0$ with $\alpha_* < 0$ only when
\begin{equation}
\label{defalpstCG}
\alpha_* = - \, \left( \frac{\mu + \sqrt{\mu^2 + 4(\kappa^2 - \mu^2)}}{2(\kappa - \mu)}\right) < 0.
\end{equation} 
Thanks to Theorem \ref{stabcriteria} we obtain the following
\begin{proposition}
\label{propCGt}
For the two-dimensional Ciarlet-Geymonat model \eqref{hCGd2}, classical shocks with base sate $U^+ = \Ido$ and intensity $\alpha < 0$ are uniformly stable if $\alpha \in (\alpha_*,0)$ and weakly stable if $\alpha \in (-\infty, \alpha_*]$, where the critical value $\alpha_*$ is given by \eqref{defalpstCG}.
\end{proposition}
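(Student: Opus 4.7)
The plan is to apply Theorem \ref{stabcriteria} directly, using the simplified quantities already computed in the paragraph preceding the proposition. Since $\rho(\alpha) = -(\kappa-\mu)\alpha > 0$ for every $\alpha < 0$ (because $\kappa > \mu > 0$), case (a) of the theorem never applies at this base state, so the stability type is governed entirely by condition \eqref{critdef}. Concretely, uniform stability of the shock is equivalent to
\[
F(\xi_2,\alpha) := \bigl(\mathsmaller{\sqrt{\zeta^+(\xi_2)}} + \tau^+\eta^+(\xi_2)\bigr)^2 - \frac{\rho(\alpha)\kappa_2^+}{s^2\theta_{11}^+}\,P^+(\xi_2) > 0 \quad \text{for all } \xi_2 \neq 0,
\]
while failure of this inequality for some $\xi_2\neq 0$ yields only weak stability.

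Next, I would substitute the values $\eta^+(\xi_2)=0$, $P^+(\xi_2)=\xi_2^2$, $\zeta^+(\xi_2)=(\mu+\kappa)\xi_2^2$, $\theta_{11}^+=1$, $\kappa_2^+=\mu+\kappa$, and $s^2=\kappa+\mu/(1-\alpha)$ (all obtained in the preceding discussion for $U^+=\Ido$) into $F$. Since $\eta^+\equiv 0$, the square-root branch issue disappears and $F$ factors cleanly as $F(\xi_2,\alpha) = (\mu+\kappa)\xi_2^2\, L(\alpha)$, where $L(\alpha)$ is exactly the scalar function displayed just before the proposition. Because $(\mu+\kappa)\xi_2^2 > 0$, the sign of $F$ is independent of $\xi_2$ and coincides with the sign of $L(\alpha)$; in particular, the stability type depends only on $\alpha$ and on the elastic moduli, not on the Fourier frequency.

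Then I would analyze $L(\alpha)$ for $\alpha<0$. Its denominator $\mu + (1-\alpha)\kappa$ is strictly positive, so clearing it reduces $L(\alpha)=0$ to the quadratic
\[
(\kappa-\mu)\alpha^2 + \mu\alpha - (\mu+\kappa) = 0,
\]
whose discriminant $\mu^2 + 4(\kappa^2-\mu^2) = 4\kappa^2 - 3\mu^2$ is strictly positive (since $\kappa>\mu$ gives $4\kappa^2 > 4\mu^2 > 3\mu^2$). The two roots are $\alpha = \bigl(-\mu \pm \sqrt{\mu^2 + 4(\kappa^2-\mu^2)}\bigr)/2(\kappa-\mu)$; the plus branch is positive and the minus branch coincides with $\alpha_*$ in \eqref{defalpstCG}, which is therefore the unique zero of $L$ in $(-\infty,0)$. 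Combining this with $L(0)=1>0$ and continuity of $L$ on $(-\infty,0)$ immediately gives $L(\alpha)>0$ on $(\alpha_*,0)$ and $L(\alpha)<0$ on $(-\infty,\alpha_*)$, with $L(\alpha_*)=0$. Feeding these three regimes back into Theorem \ref{stabcriteria}(b) yields uniform stability on $(\alpha_*,0)$ and weak (but not uniform) stability on $(-\infty,\alpha_*]$, as claimed.

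The proof is essentially bookkeeping around Theorem \ref{stabcriteria}; there is no conceptual obstacle, and the only points demanding a little care are the verification that $\mu + (1-\alpha)\kappa$ does not vanish for $\alpha<0$, that the discriminant is positive so both roots are real, and that exactly one root is negative so that the sign chart of $L$ on $(-\infty,0)$ is unambiguous.
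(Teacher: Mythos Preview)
Your proposal is correct and follows essentially the same approach as the paper: both apply Theorem \ref{stabcriteria}(b) after reducing condition \eqref{critdef} to the sign of the scalar function $L(\alpha)$, then identify $\alpha_*$ as its unique negative zero. Your write-up merely supplies the details (clearing the denominator, checking the discriminant, and determining which root is negative) that the paper leaves to the reader under ``it is easily verified.''
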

 
To illustrate this behavior we compute the Lopatinski\u{\i} determinant, version 2 (see Lemma \ref{lemLopdetv2}) as a function of the transformed frequencies $(\gamma, \xi_2) \in \widetilde{\Gamma}^+$. Substituting the above parameters into \eqref{Lopadetv2} we obtain
\begin{equation}
\label{LopdetCGd2}
\widetilde{\Delta}(\gamma,\xi_2) = \left[ \gamma - (\mu+\kappa)^{1/2} \Big(\kappa + \frac{\mu}{1-\alpha}\Big)^{-1/2} \Big( \gamma^2 + (\mu+\kappa)\xi_2^2\Big)^{1/2}\right]^2 - \frac{\alpha(\kappa^2 - \mu^2)\xi_2^2}{\kappa + \frac{\mu}{1-\alpha}}.
\end{equation}

Set the shear and bulk moduli as $\kappa = 2 > \mu =1$. Hence the threshold $\alpha$-value for weak/uniform stability is $\alpha_* = -2.3028$. Since the condition for uniform to weak stability does not depend on $\xi_2$ we may assume that $|\xi_2| = 1$. Figure \ref{figCG} shows the 3D and contour plots of the Lopatinski\u{\i} determinant \eqref{LopdetCGd2} for the Ciarlet-Geymonat model \eqref{hCGd2} in dimension $d = 2$ as function of $\gamma \in \C$ with $\xi_2^2 = 1$, for the shock parameter value $\alpha = -0.3 \in (\alpha_*,0)$ in Figure \ref{figCGa}, and for $\alpha_* = -8 \in (-\infty,\alpha_*)$ in Figure \ref{figCGb}. Notice that the Lopatinski\u{\i} function does not vanish in $\Re \gamma \geq 0$ in case (a), whereas in case (b) two zeroes along the imaginary axis emerge (this is particularly noticeable in the 3D plot on the left). These figures illustrate the transition from uniform to weak stability stated in Proposition \ref{propCGt}.

\begin{figure}[t]
\begin{center}
\subfigure[$\alpha \in (\alpha_*,0)$]{\label{figCGa}\includegraphics[scale=.55, clip=true]{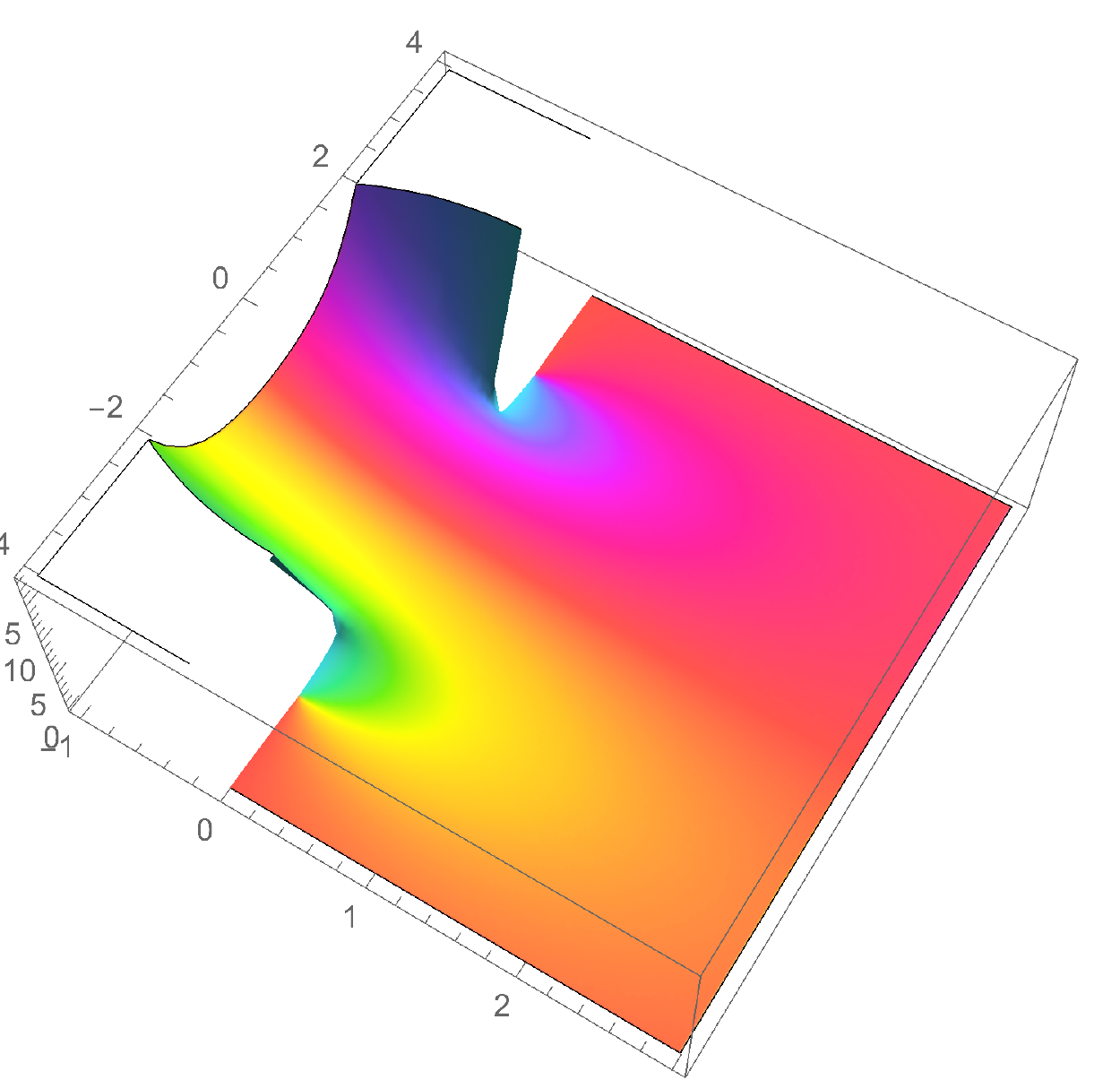}
\includegraphics[scale=.5, clip=true]{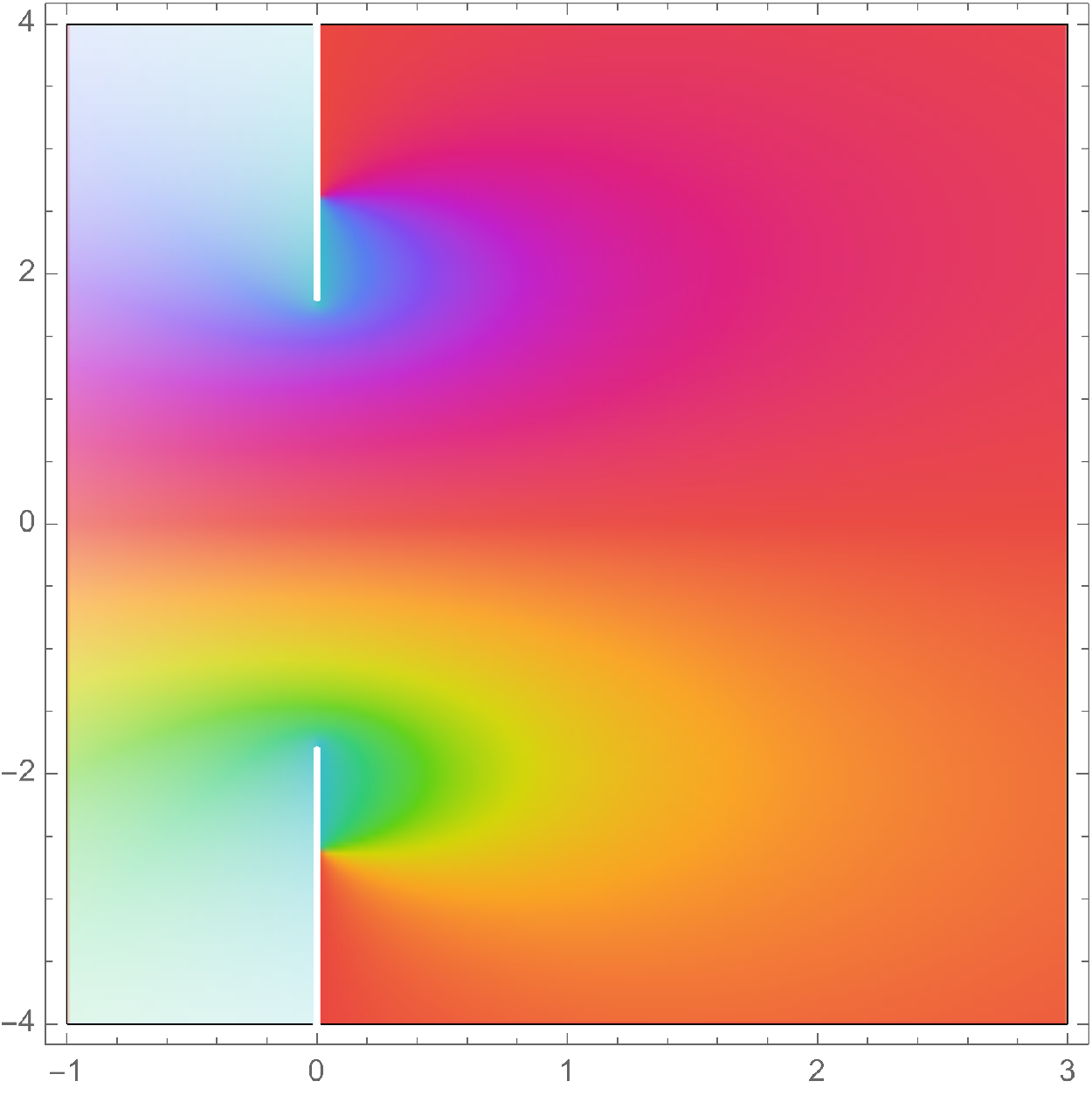}
\includegraphics[scale=.5, clip=true]{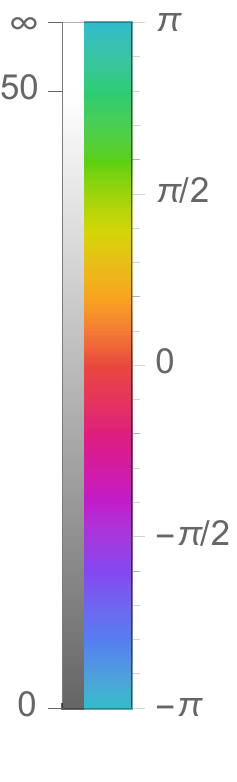}}
\subfigure[$\alpha \in (-\infty,\alpha_*)$]{\label{figCGb}\includegraphics[scale=.55, clip=true]{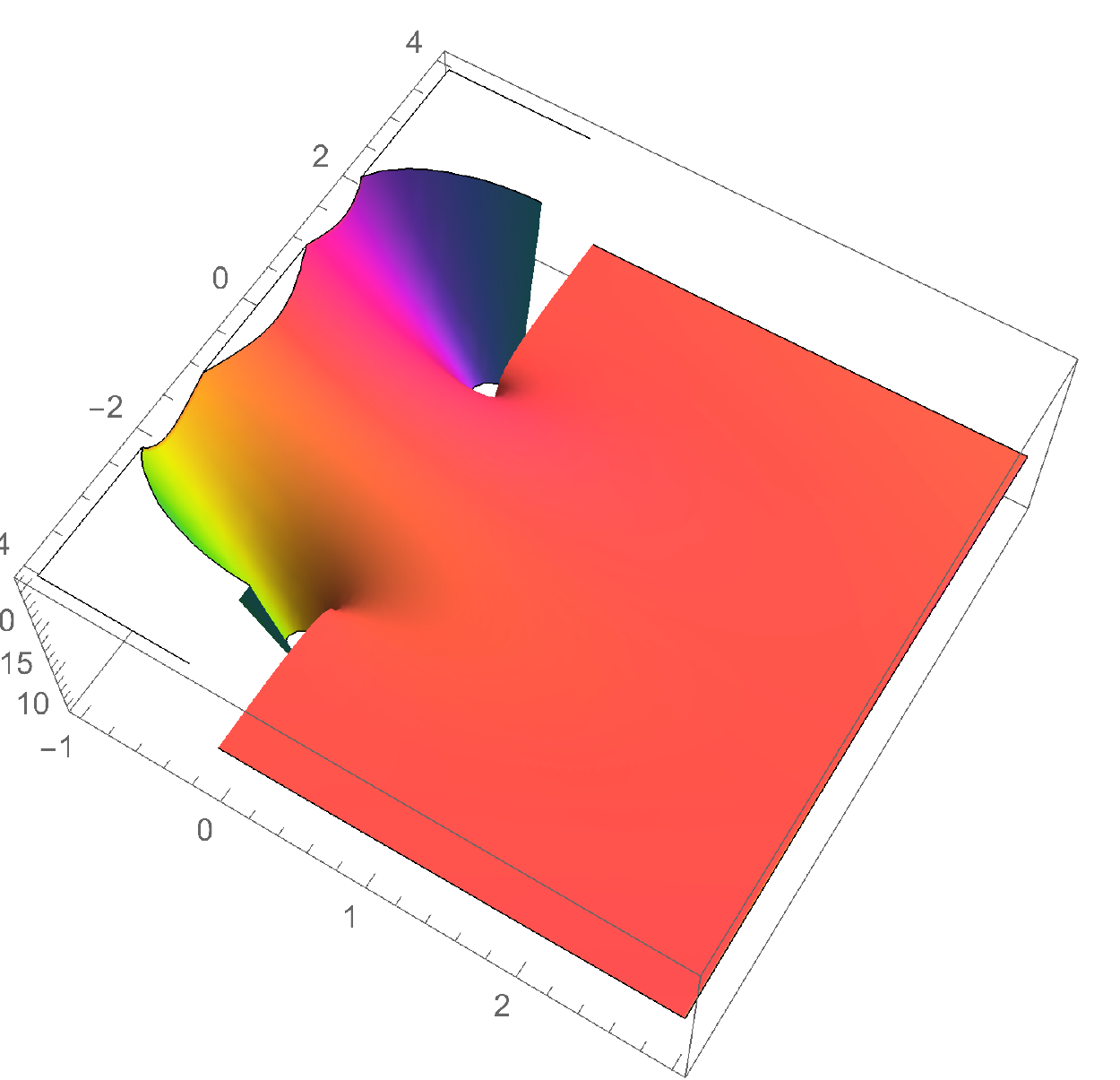}
\includegraphics[scale=.5, clip=true]{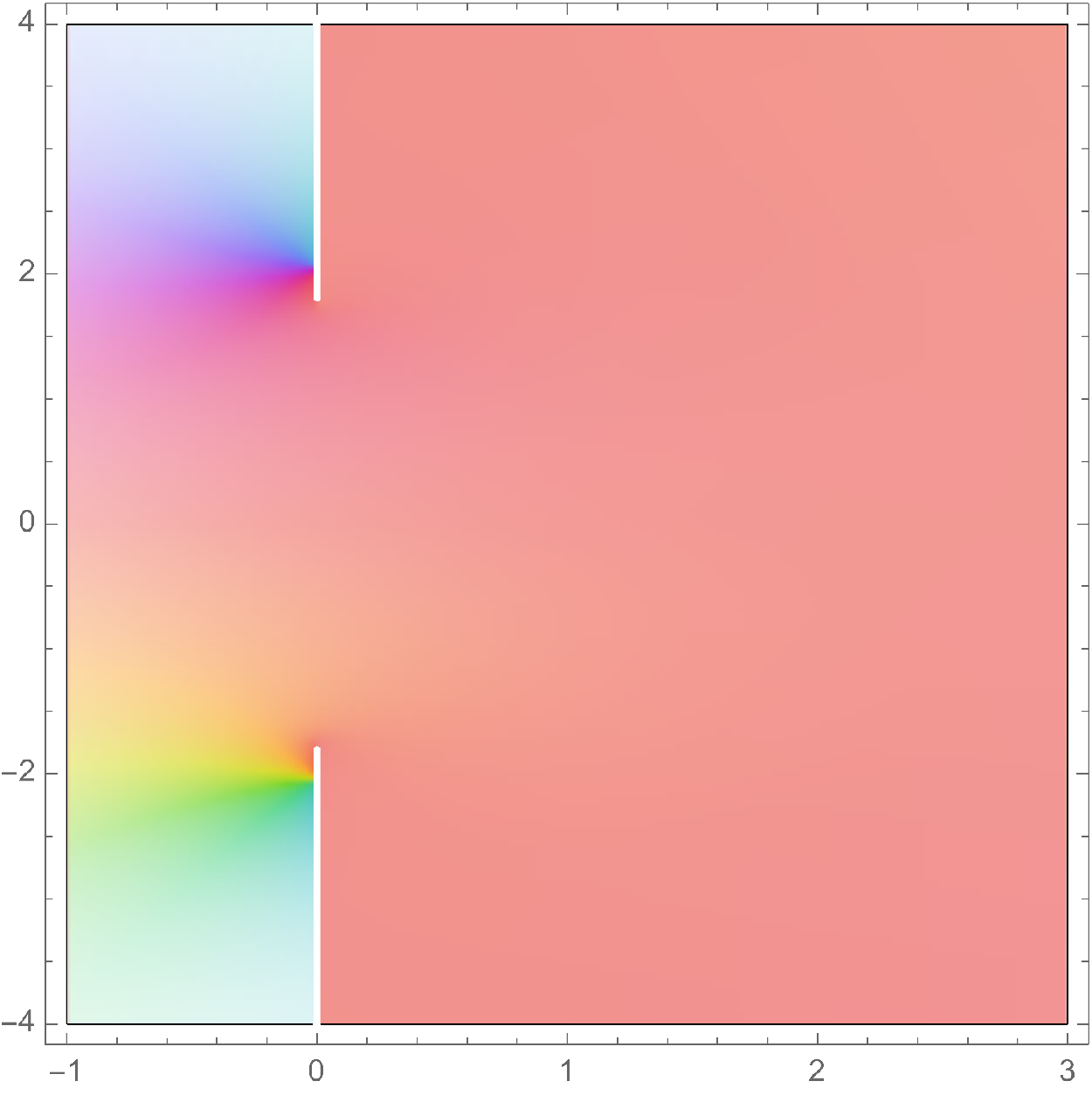}
\includegraphics[scale=.5, clip=true]{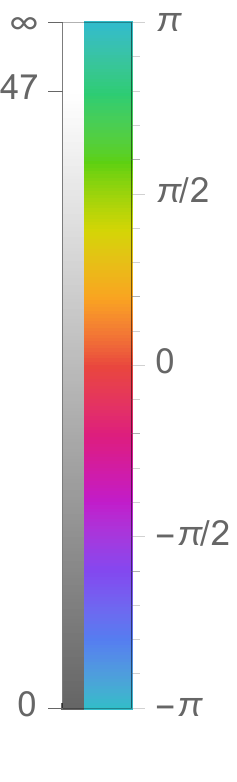}}
\end{center}
\caption{Complex plot (in 3D, left, and contour, right) of the Lopatinski\u{\i} determinant \eqref{LopdetCGd2} for the Ciarlet-Geymonat model \eqref{hCGd2} in dimension $d = 2$ as function of $\gamma \in \C$, with $\xi_2^2 = 1$, for elastic parameter values $\kappa =2$, $\mu = 1$ and for the shock parameter value $\alpha = -0.3 \in (\alpha_*,0)$ (panel (a)) and $\alpha = -8 \in (-\infty,\alpha_*)$ (panel (b)). The color mapping legend shows the modulus $|\Delta| \in (0,\infty)$ from dark to light tones of color and the phase from light blue ($\text{arg}(\gamma) = -\pi$) to green ($\text{arg}(\gamma) = \pi$). (Color online.)}\label{figCG}
\end{figure}

\subsubsection{Blatz model in dimension $d = 3$}

Let us now consider the model  proposed by Blatz \cite{Bla71} (see section \S \ref{subsecexamples}) in dimension $d = 3$,
\begin{equation}
\label{hB3d}
h(J) = - \frac{3}{2} \mu + \big( \kappa - \frac{2}{3} \mu\big) (J - 1) - \big( \kappa + \frac{\mu}{3} \big) \log J,
\end{equation}
where $\kappa > \tfrac{2}{3} \mu > 0$ are constant. This energy function, which models compressible elastomers, was studied in \cite{N-NSh71} from a numerical perspective. From \eqref{hB3d} we clearly have
\[
h''(J) = (\kappa + \tfrac{1}{3}\mu) \frac{1}{J^2} > 0, \qquad h'''(J) = - 2 (\kappa + \tfrac{1}{3}\mu) \frac{1}{J^3} < 0,
\]
for all $J \in (0,\infty)$ and conditions \eqref{H1} -- \eqref{H3} are satisfied. Thus, Proposition \ref{proppadonde} implies that for Lax shocks we require $\alpha < 0$. Use \eqref{RHmod} and $J^- = J^+ - \alpha |V_1^+|^2 = J^+ -\alpha \theta_{11}^+$ to write
\[
s^2 - \mu = (\kappa + \tfrac{1}{3}\mu ) \frac{\theta_{11}^+}{J^+ J^-},
\]
yielding, in turn,
\[
\rho(\alpha) = h''(J^+) - \frac{(s^2 - \mu)}{\theta_{11}^+} \frac{J^-}{J^+} \equiv 0.
\]
In view of Theorem \ref{stabcriteria} we obtain the following
\begin{proposition}
\label{propBd3}
For the three-dimensional Blatz model \eqref{hB3d} all classical elastic shocks are uniformly stable.
\end{proposition}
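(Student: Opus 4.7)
My plan is to reduce the claim to a direct application of Theorem \ref{stabcriteria}(a) by showing that the material stability parameter $\rho(\alpha)$ vanishes identically for the Blatz model, so the sufficient condition $\rho(\alpha)\le 0$ for uniform stability holds for every classical shock. Since the $h$ in \eqref{hB3d} manifestly satisfies \eqref{H1}--\eqref{H3} with $h'''<0$, Proposition \ref{proppadonde}(b) restricts the admissible shock intensities to $\alpha<0$, and it suffices to compute $\rho$ on this range.

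First, I would use the Rankine-Hugoniot formula \eqref{speedsq} to write $s^2-\mu=\alpha^{-1}\llb h'(J)\rrb$, and then plug in $h'(J)=(\kappa-\tfrac{2}{3}\mu)-(\kappa+\tfrac{\mu}{3})J^{-1}$ to get $\llb h'(J)\rrb=(\kappa+\tfrac{\mu}{3})(J^+-J^-)/(J^+J^-)$. The key algebraic identity to invoke is $J^+-J^-=\alpha\,\theta_{11}^+$, which follows from \eqref{RHmod} together with $\theta_{11}^+=|V_1^+|^2$; this makes $\alpha$ cancel cleanly from the expression, leaving the clean formula
\[
s^2-\mu=\Big(\kappa+\tfrac{1}{3}\mu\Big)\frac{\theta_{11}^+}{J^+J^-}.
\]

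Second, I would rewrite the bracketed factor in Definition \ref{defirho} as $\tfrac{1}{\theta_{11}^+}-\tfrac{\alpha}{J^+}=(J^+-\alpha\,\theta_{11}^+)/(\theta_{11}^+J^+)=J^-/(\theta_{11}^+J^+)$, which produces precisely the cancellation needed when multiplied against $s^2-\mu$: the product collapses to $(\kappa+\tfrac{1}{3}\mu)/(J^+)^2$, which is exactly $h''(J^+)$. Thus $\rho(\alpha)\equiv 0$ independently of the value of $\alpha<0$ and of the base state $U^+\in\M_+^3$, and applying Theorem \ref{stabcriteria}(a) closes the argument.

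I do not anticipate any genuine obstacle: the entire content lies in the structural feature that, for a $-\log J$-type volumetric potential, the Rankine-Hugoniot-derived quantity $(s^2-\mu)\,J^-/(\theta_{11}^+J^+)$ coincides with $h''(J^+)$, making the defining combination in $\rho$ telescope. The one subtle point to highlight is \emph{why} the cancellation is exact and not merely a $O(\alpha)$ approximation: it is due to the particular choice $h'(J)=\text{const}-(\kappa+\tfrac{\mu}{3})/J$, whose discrete slope $\llb h'(J)\rrb/\llb J\rrb$ equals $(\kappa+\tfrac{\mu}{3})/(J^+J^-)$ rather than $h''$ evaluated at either endpoint. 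This exact geometric-mean property is what delivers uniform stability for shocks of arbitrary amplitude.
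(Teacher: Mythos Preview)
Your proposal is correct and follows essentially the same approach as the paper: compute $s^2-\mu$ from the Rankine--Hugoniot relation for the Blatz potential, simplify $\tfrac{1}{\theta_{11}^+}-\tfrac{\alpha}{J^+}=J^-/(\theta_{11}^+J^+)$, observe that the product equals $(\kappa+\tfrac{1}{3}\mu)/(J^+)^2=h''(J^+)$, conclude $\rho(\alpha)\equiv 0$, and invoke Theorem~\ref{stabcriteria}(a). Your write-up is slightly more detailed in spelling out $\llb h'(J)\rrb$ and the underlying cancellation mechanism, but the argument is the same.
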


As before, for the sake of simplicity we consider an undeformed base state, $U^+ = \Itr$, and $\alpha < 0$ to define the shock. In this fashion, $J^+ =1$, $V_1^+ = \hat{e}_1 \in \R^3$ and
\[
U^- = \Itr - \alpha (\hat{e}_1 \otimes \hat{e}_1) = \begin{pmatrix} 1-\alpha & 0 & 0 \\ 0 & 1 & 0 \\ 0 & 0 & 1 \end{pmatrix}, \qquad J^- = 1-\alpha > 1.
\]
Here, the transversal frequencies vector is $\xim = (\xi_2,\xi_3)^\top \in \R^2$ and $V_j^+ = \hat{e}_j \in \R^3$, for $j = 2,3$. This yields, $\eta^+(\xim) = \sum_{j \neq 1} (V_1^+)^\top V_j^+ \xi_j = \sum_{j \neq 1} \hat{e}_1^\top \hat{e}_j \xi_j = 0$. Direct calculations lead to
\[
\kappa_2^+ = \kappa + \tfrac{4}{3}\mu > 0, \qquad s^2 = \mu + \frac{\kappa + \tfrac{1}{3}\mu}{1-\alpha}, \qquad \zeta^+(\xim) = (\kappa + \tfrac{4}{3}\mu) |\xim|^2,
\]
with $s < 0$. Let us define
\[
C_1(\kappa,\mu,\alpha) := -\frac{\sqrt{\kappa_2^+}}{s}  = \sqrt{\frac{(1-\alpha)(3\kappa + 4\mu)}{(4 - 3\alpha)\mu + \kappa}} > 0.
\]
Since $\rho(\alpha) = 0$ and $\eta^+(\xim) = 0$, the second version of the Lopatinski\u{\i} determinant \eqref{Lopadetv2} then reduces to
\[
\widetilde{\Delta}(\gamma,\xim) = \left( \gamma + C_1(\kappa,\mu,\alpha) \Big( \gamma^2 + \zeta^+(\xim)\Big)^{1/2}\right)^2,
\]
for $(\gamma,\xim) \in \widetilde{\Gamma}^+$. Since $\eta^+(\xim) = 0$ the set of remapped frequencies $(\gamma,\xim) \in \widetilde{\Gamma}^+$ is given by
\[
\Re \gamma > 0, \qquad \frac{-\alpha(\kappa + \tfrac{1}{3}\mu)}{(1-\alpha)(\kappa + \tfrac{4}{3}\mu)} |\gamma|^2 + |\xim|^2 = 1.
\]
Solving for $|\xim|^2$ and substituting into the Lopatinski\u{\i} determinant we obtain the following expression as a function of $\gamma \in \C$ alone,
\begin{equation}
\label{LopDetBd3}
\widetilde{\widetilde{\Delta}} (\gamma) := \widetilde{\Delta}(\gamma,\xim)_{|(\gamma, \xim) \in \widetilde{\Gamma}^+}
=\left[ \gamma + C_1(\kappa,\mu,\alpha) \left( \gamma^2 + (\kappa + \tfrac{4}{3}\mu) + \frac{\alpha (\kappa + \tfrac{1}{3}\mu)}{1-\alpha} |\gamma|^2 \right)^{1/2} \right]^2.
\end{equation}

Figure \ref{figBlatz3d} shows both the 3D and contour plots of the Lopatinski\u{\i} determinant \eqref{LopDetBd3} as a function of $\gamma \in \C$, for elastic parameter values $\kappa =1$, $\mu = 1$ and for the shock parameter value $\alpha = -5$. Notice that the function never vanishes for $\Re \gamma \geq 0$, confirming the uniform stability of the shock stated in Proposition \ref{propBd3}.

\begin{figure}[t]
\begin{center}
\includegraphics[scale=.55, clip=true]{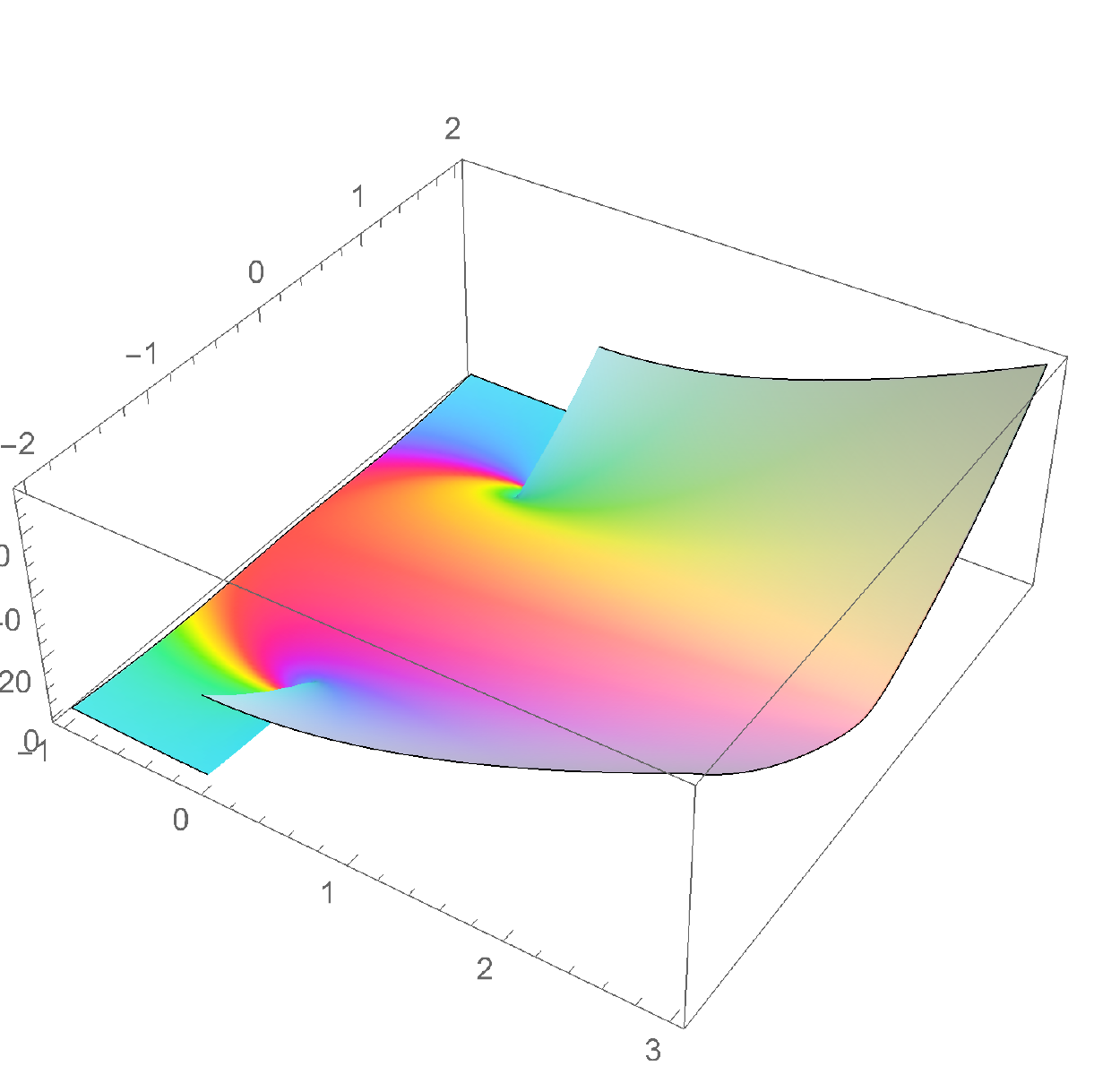}
\includegraphics[scale=.5, clip=true]{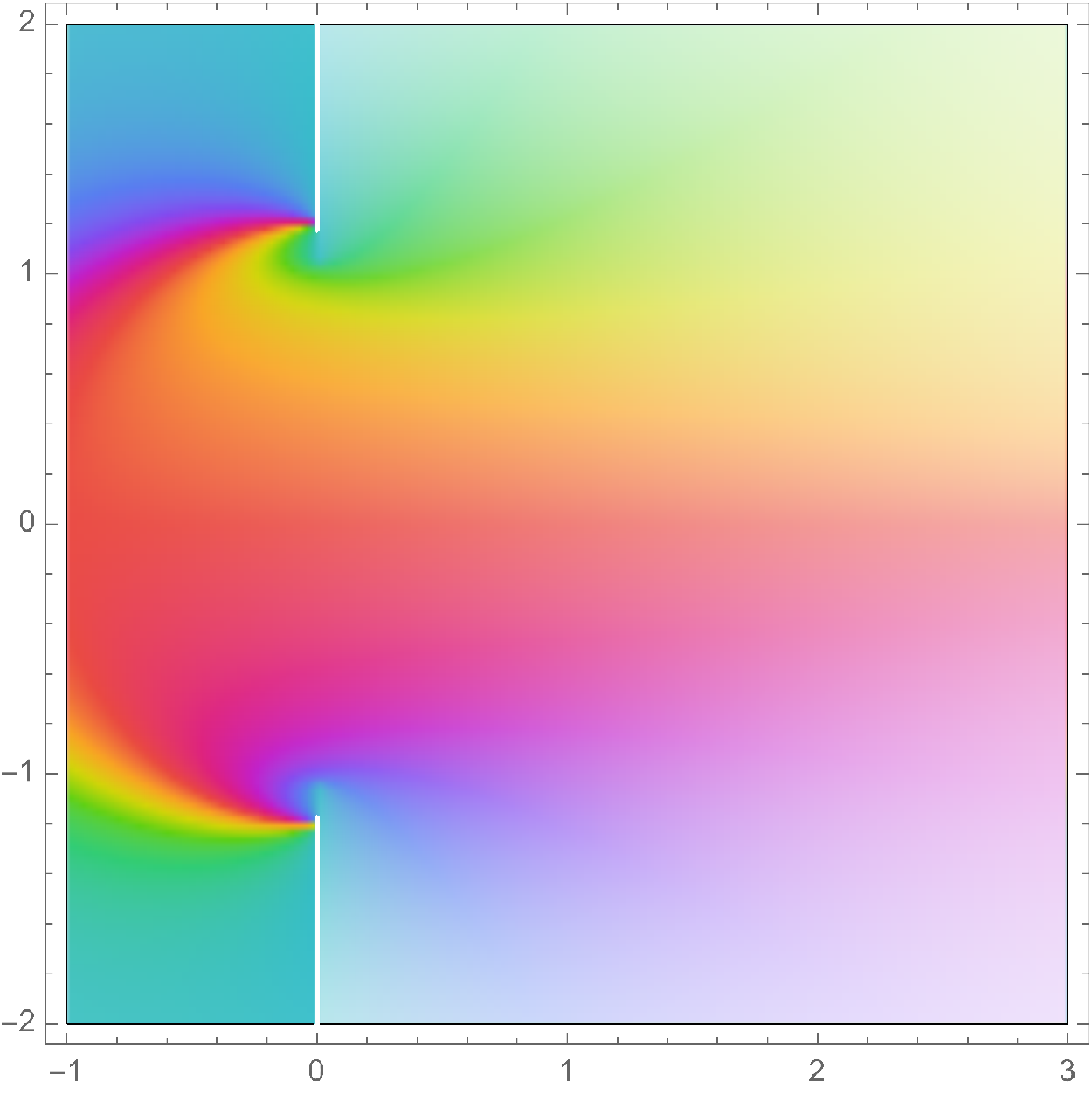}
\includegraphics[scale=.5, clip=true]{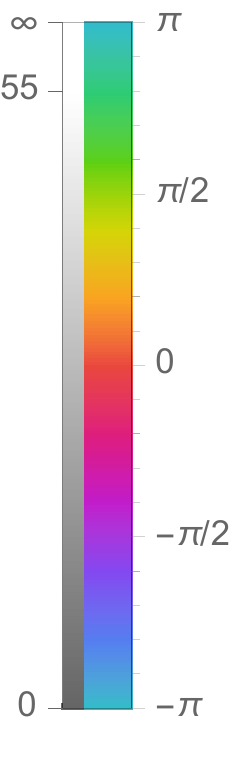}
\end{center}
\caption{Complex plot (in 3D, left, and contour, right) of the Lopatinski\u{\i} determinant \eqref{LopDetBd3} for the Blatz model \eqref{hB3d} in dimension $d = 3$ as function of $\gamma \in \C$ for elastic parameter values $\kappa =1$, $\mu = 1$ and for the shock parameter value $\alpha = -5$. The color mapping legend shows the modulus $|\Delta| \in (0,\infty)$ from dark to light tones of color and the phase from light blue ($\text{arg}(\gamma) = -\pi$) to green ($\text{arg}(\gamma) = \pi$). (Color online.)}\label{figBlatz3d}
\end{figure}

\section{Discussion}
\label{secdisc}

In this paper, we have explicitly computed and studied the Lopatinski\u{\i} determinant (or stability function) associated to classical planar shock fronts for compressible, non thermal, hyperelastic materials of Hadamard type in any space dimension. The stored energy density functions characterizing such materials have the form \eqref{Hadamardmat} and satisfy hypotheses \eqref{H1} and \eqref{H2}. Once a base state is selected, all elastic classical shocks can be described in terms of a shock parameter $\alpha \in \R\backslash \{0\}$ which determines the shock speed, the end state and the shock amplitude. For simplicity, we assume that the material further satisfies the material convexity condition \eqref{H3}. It is shown that for materials satisfying \eqref{H1} -- \eqref{H3} all classical shocks are, at least, weakly stable. This is tantamount to the fact that Hadamard-type ill-posed examples cannot be constructed for the linearized problem. In several space dimensions, it is known that the transition from a weakly stable to a strongly unstable shock is signaled by the instability with respect to one dimensional perturbations (see Serre \cite{Ser8}). Hence, Corollary \ref{cor1dstab} (which establishes the one-dimensional stability of all shocks) is consistent with the absence of violent multidimensional instabilities.

Moreover, the explicit calculation of the Lopatinski\u{\i} determinant as a function of the space-time frequencies allows to perform a complete (spectral) study of the constant coefficients problem analytically. We introduce a scalar stability parameter, $\rho(\alpha)$, depending solely on the shock parameters and on the elastic moduli of the material, which determines the transition from uniform to weak stability according to the condition \eqref{critdef}. In the cases where the shock is weakly stable, we introduced a mapping in the frequency space which allows to locate two zeroes along the imaginary axis. In the case where the uniform stability condition holds, one may directly conclude the nonlinear stability of the shock as well as the persistence of the front structure (local-in-time existence and uniqueness of the shock wave for the nonlinear system of equations), in view that the analyses of Majda \cite{M1,M2} and M\'{e}tivier \cite{Me5} apply. For that purpose, it is to be observed that the system of elasytodynamics satisfies the block structure assumption of Majda (see \cite{Co1}) and the constant multiplicity of M\'{e}tivier (see Corollary \ref{corHadLG} above), allowing the construction of Kreiss symmetrizers and the establishment of energy estimates for the linearized coefficients problem (see \cite{M1,M2,BS}). The nonlinear conclusion is, thus, at hand. The local-in-time existence of weakly stable shocks for hyperelastic materials remains an open problem. 

The explicit computation of the Lopatinski\u{\i} determinant presented here could be useful in the study of \emph{elastic phase boundaries} for Hadamard materials, which are structures associated to the case where the volumetric energy density $h$ has the shape of a double-well potential (for a recent contribution in this direction, see \cite{GrTr19}). Such investigation must follow the theoretical setup developed in \cite{FrP1} and (perhaps) the numerical approach of \cite{Pl2}, in order to deal with kinetic relations which are dissipative perturbations of the Maxwell equal area rule. This is a problem that warrants future investigations.

\section*{Acknowledgements}

The authors are warmly grateful to Heinrich Freist\"uhler for suggesting the problem and for many stimulating conversations. The authors also thank Andrea Corli for kindly calling their attention to his work in \cite{Co1}. Finally, the authors thank an anonymous referee whose careful revision and sharp comments improved the quality and scope of this paper. This research was supported by DGAPA-UNAM, program PAPIIT, grant IN-104922. The work of F. Vallejo was partially supported by CONACyT (Mexico) through a scholarship for doctoral studies, grant no. 740356/596608.

\appendix

\section{Multidimensional stability of planar shock fronts}
\label{sechypall}

For convenience of the reader, in this section we gather basic information about the stability conditions for multidimensional shock fronts. The reader is referred to the books by Benzoni-Gavage and Serre \cite{BS}, Majda \cite{M1,M3} and Serre \cite{Ser2} for more information. Consider a hyperbolic system of $n$ conservation laws in $d \geq 2$ space dimensions of the form,
\begin{equation}
\label{HSCL}
u_t + \sum_{j=1}^d f^j(u)_{x_j} = 0,
\end{equation}
where $x \in \R^d$ and $t \geq 0$ are space and time variables, respectively, and $u \in \cU \subset \R^n$ denotes the vector of $n$ conserved quantities (here $\cU$ denotes an open connected set). The flux functions $f^j \in C^2(\cU; \R^n)$, $j = 1, \ldots, d$, are supposed to be twice continuously differentiable and to determine the flux of the conserved quantities along the boundary of arbitrary volume elements. System \eqref{HSCL} is hyperbolic in $\cU$ if for any $u \in \cU$ and all $\xi \in \R^d$, $\xi \neq 0$, the matrix
\begin{equation}
\label{defA}
A(\xi,u) := \sum_{j=1}^d \xi_j A^j(u),
\end{equation}
where $A^j(u) := Df^j(u) \in \R^{n \times n}$ for each $j$, is diagonalizable over $\R$ with eigenvalues
\begin{equation}
\label{charspeeds}
a_1(\xi,u) \leq \ldots \leq a_n(\xi,u),
\end{equation}
of class at least $C^1(\cU \times \R^d; \R)$, called the \emph{characteristic speeds}. Each eigenvalue $a_j(\xi,u)$ is semi-simple (algebraic and geometric multiplicities coincide), with constant multiplicity for all $(u,\xi) \in \cU \times \R^d \backslash \{0\}$. The matrix $A(\xi,u)$ has a complete set of right (column) eigenvectors $r_1(\xi,u), \ldots, r_n(\xi,u) \in C^1(\R^d \times \cU;\R^{n \times 1})$, satisfying $A(\xi,u) r_j(\xi,u) = a_j(\xi,u) r_j(\xi,u)$ for each $j$, as well as a complete set of left (row) eigenvectors $l_1(\xi,u), \ldots, l_n(\xi,u) \in C^1(\R^d \times \cU;\R^{1 \times n})$, satisfying $l_j(\xi,u) A(\xi,u) = a_j(\xi,u) l_j(\xi,u)$.

An important class of weak solutions to \eqref{HSCL} are known as \emph{shock fronts}, which are configurations of the form
\begin{equation}
\label{gshock}
u(x,t) = \begin{cases}
u^+, & x \cdot \hat{\nu} > st,\\ u^-, & x \cdot \hat{\nu} < st,
\end{cases}
\end{equation}
where $u^\pm \in \cU$ are constant states, $u^+ \neq u^-$, and $\hat{\nu} = (\nu_1, \ldots, \nu_d) \in \R^d$, $|\hat{\nu}| = 1$ is a fixed direction of propagation. The shock speed $s \in \R$ is not arbitrary but determined by the classical Rankine-Hugoniot jump conditions \cite{La1,Da4e},
\begin{equation}
\label{gRH}
-s \llb u \rrb + \sum_{j=1}^d \llb f^j(u) \rrb \nu_j = 0,
\end{equation}
where the bracket $\llb \cdot \rrb$ denotes the jump across the interface or, more precisely,
\[
\llb g(u) \rrb := g(u^+) - g(u^-),
\]
for any (vector or matrix valued) function $g = g(u)$. Jump conditions \eqref{gRH} are necessary conditions for the configuration \eqref{gshock} to be a weak solution to \eqref{HSCL} and express conservation of the state variables $u$ across the interface, $\Sigma = \{x \cdot \hat{\nu} - st = 0 \}$. 

To circumvent the problem of non-uniqueness of weak solutions of the form \eqref{gshock} one further imposes an entropy condition. The shock front \eqref{gshock} is called an \emph{admissible} (or \emph{classical}) $p$-\emph{shock} if it satisfies Lax entropy condition (cf. \cite{La1,Da4e}): there exists an index $1 \leq p \leq n$ such that
\begin{equation}
\label{gLax}
\begin{aligned}
a_{p-1}(\hat{\nu},u^-) & < s  < a_p(\hat{\nu},u^-),\\
a_{p}(\hat{\nu},u^+) & < s  < a_{p+1}(\hat{\nu},u^+),
\end{aligned}
\end{equation}
where, by convention, if $p=1$ then $a_{p-1}(\hat{\nu}, u^-) := - \infty$, and if $p = n$ then $a_{p+1}(\hat{\nu}, u^+) := +\infty$. In the case where $p=1$ or $p=n$ the shock is called \emph{extreme}. The eigenvalue $a_p(\hat{\nu},u)$ is called the principal characteristic speed and $r_p(\hat{\nu},u)$ is the principal characteristic field. It is said that the former is \emph{genuinely nonlinear in the direction} $\hat{\nu}$ (cf. Majda \cite{M3}) if $D_u a_p(\hat{\nu},u)^\top r_p(\hat{\nu},u) \neq 0$ (or equivalently, $l_p(\hat{\nu},u) D_u a_p(\hat{\nu},u) \neq 0$) for all $u \in \cU$.

Given a base state $u^+ \in \cU$, the Hugoniot locus is defined as the set of all states in $\cU$ that can be connected to $u^+$ with a speed satisfying the jump conditions \eqref{gRH}. The intersection of the Hugoniot locus with those states for which one can find a shock speed satisfying Lax entropy condition \eqref{gLax} for some $1 \leq p \leq n$ is referred to as the $p$-\emph{shock curve}. If, in addition, $u^+ \in \cU$ is a point of genuine nonlinearity of the $p$-th characteristic family in direction of $\hat{\nu}$, for which $a_p(\hat{\nu}, u^+)$ is a simple eigenvalue and 
\begin{equation}
\label{ggnl}
D_u a_p(\hat{\nu}, u^+)^\top r_p(\hat{\nu}, u^+) > 0, \qquad \text{(respectively, $< 0$),}
\end{equation}
then the $p$-shock curve locally behaves like 
\begin{equation}
\label{gshockcurve}
\begin{aligned}
u^- &= u^+ + \epsilon \, r_p(\hat{\nu}, u^+) + O(\epsilon^2),\\
s &= a_p(\hat{\nu}, u^+) + \tfrac{1}{2} \epsilon \, D_u a_p(\hat{\nu}, u^+)^\top r_p(\hat{\nu}, u^+) + O(\epsilon^2),
\end{aligned}
\end{equation}
and satisfies Lax entropy condition \eqref{gLax} if and only if $\epsilon < 0$ (respectively, $\epsilon > 0$). The parameter $\epsilon$ measures the strength of the shock, $|u^+ - u^-| = O(|\epsilon|)$.

It is well known that the nonlinear stability behavior of shock fronts of the form \eqref{gshock} is determined by the so called \emph{uniform and weak Lopatinski\u{\i} conditions} (see Benzoni-Gavage and Serre \cite{BS}, Majda \cite{M1,M2}, M\'etivier \cite{Me2,Me1,Me5} and the references therein). The analysis to obtain the former departs from a Fourier-Laplace decomposition of the constant-coefficient linearized problem associated with \eqref{HSCL} at the configuration \eqref{gshock}. By considering single normal modes of the form $u \sim e^{\lambda t} e^{\ii \xi \cdot x}$ with spatio-temporal frequencies lying on the set
\begin{equation}
\label{genfreq}
\Gamma_{\hat{\nu}}^+ = \left\{ (\lambda,\xi) \in \C \times \R^d \, : \, \Re \lambda > 0, \, \xi \cdot \hat{\nu} = 0, \, |\lambda|^2 + |\xi|^2 = 1 \right\},
\end{equation}
as solutions to the linearized problem around the shock \eqref{gshock}, one arrives at the \emph{Lopatinski\u{\i} determinant} or stability function
\begin{equation}
\label{gLopdet}
\Delta(\lambda,\xi) = \det \Big( \cR_1^-, \ldots, \cR_{p-1}^-, \, \lambda \llb u \rrb + \ii \sum_{j=1}^d  \xi_j \llb f^j(u) \rrb, \cR_{p+1}^+, \ldots, \cR_n^+ \Big), \qquad (\lambda,\xi) \in \Gamma_{\hat{\nu}}^+,
\end{equation}
where $\cR_1^-(\lambda,\xi), \ldots, \cR_{p-1}^-(\lambda,\xi) \in \C^{n \times 1}$ denotes a basis of the stable subspace of $\cA^-(\lambda,\xi)$, and $\cR_{p+1}^+(\lambda,\xi), \ldots, \cR_{n}^+(\lambda,\xi) \in \C^{n \times 1}$ denotes a basis of the unstable subspace of $\cA^+(\lambda,\xi)$, whereupon we define the matrix fields
\[
\cA^\pm(\lambda,\xi) := (\lambda \In + \ii A(\xi, u^\pm))(A(\hat{\nu}, u^\pm) - s\In)^{-1} \in \C^{n \times n}, \qquad (\lambda,\xi) \in \Gamma_{\hat{\nu}}^+.
\]
Notice that, in view of Lax entropy conditions, the shock is not characteristic with $s \neq a_p^\pm$ and hence the matrices $A(\hat{\nu}, u^\pm) - s\In$ are not singular. The fact that the stable subspace of $\cA^-(\lambda,\xi)$ and the unstable subspace of $\cA^+(\lambda,\xi)$ have exactly dimensions $p-1$ and $n-p$, respectively, follows from the hyperbolicity of the matrix fields $\cA^\pm(\lambda,\xi)$ on the set $\Gamma_{\hat{\nu}}^+$. This result is known in the literature as \emph{Hersh' lemma} \cite{Her63} (see also \cite{BS,JL,Ser2}). 

The function $\Delta$ is jointly analytic in $(\lambda,\xi) \in \Gamma_{\hat{\nu}}^+$ and homogeneous of degree one.  Also, by continuity of the eigenprojections, the Lopatinski\u{\i} determinant can be defined for all frequencies within the set
\[
\Gamma_{\hat{\nu}} := \left\{ (\lambda,\xi) \in \C \times \R^d \, : \, \Re \lambda \geq 0, \, \xi \cdot \hat{\nu} = 0, \, |\lambda|^2 + |\xi|^2 = 1 \right\},
\]
(see \cite{Kre70,M2,M1,Me2} for further information). The stability function $\Delta$ determines the solvability of the linearized equations by wave solutions that violate an $L^2$ well-posedness estimate. Whenever a zero of $\Delta$ occurs then there exist spatially decaying solutions with time growth rate $\exp (t \, \Re \lambda)$. Thus, a necessary condition for well-posedness of the linearized problem is that $\Delta$ does not vanish in the open set $\Gamma_{\hat{\nu}}^+$. A stronger condition requires $\Delta$ not to vanish in the whole frequency set $\Gamma_{\hat{\nu}}$ (allowing time frequencies with $\Re \lambda = 0$) and it is sufficient for the well-posedness of the nonlinear system, as the analyses of Majda \cite{M2,M1} and M\'etivier \cite{Me1,Me2} show. To sum up, we have the following

\begin{defin}
\label{defgstab}
Consider a planar shock wave of the form \eqref{gshock} and its corresponding Lopatinski\u{\i} determinant defined in \eqref{gLopdet}. If $\Delta$ has no zeroes $(\lambda,\xi)$ in $\Gamma_{\hat{\nu}}$ the shock is called \emph{uniformly stable} (uniform Lopatinski\u{\i} condition). If $\Delta$ has a zero $(\lambda,\xi)$ in $\Gamma_{\hat{\nu}}^+$ (with $\Re \lambda > 0$) the shock is referred to as \emph{strongly unstable}. In the intermediate case where $\Delta$ has some zero $(\lambda,\xi)$ with $\Re \lambda = 0$ but no zero in $\Gamma_{\hat{\nu}}^+$ the shock is said to be \emph{weakly stable} (weak Lopatinski\u{\i} condition).
\end{defin}
\begin{remark}
\label{remlopdetextr}
When a shock is extreme with $p=1$ then there is no stable subspace of $\mathcal{A}^-(\lambda,\txi)$ for $(\lambda,\txi) \in \Gamma_{\hat\nu}^+$ and the unstable subspace of $\mathcal{A}^+(\lambda,\txi)$ has dimension $n-1$. Therefore, the left \emph{stable} subspace of $\mathcal{A}^+(\lambda,\txi)$ is generated by a single (row) vector $l_+^s(\lambda,\txi)$ associated to a unique stable eigenvalue $\beta(\lambda,\txi)$ with $\Re \beta < 0$. In such a case the expression for the Lopatinski\u{\i} determinant simplifies to
\begin{equation}
\label{Lopdetextr}
\overline{\Delta}(\lambda,\txi) = l_+^s(\lambda,\txi) \Big( \lambda \llb u \rrb + \ii \sum_{j=1}^d  \xi_j \llb f^j(u) \rrb \Big), \qquad (\lambda,\txi) \in \Gamma_{\hat\nu}^+,
\end{equation}
in the sense that $\Delta = 0$ in $\Gamma_{\hat\nu}^+$ if and only if $\overline{\Delta} = 0$ in $\Gamma_{\hat\nu}^+$ (see \cite{BS,JL,Ser2}).
\end{remark}

When a shock is strongly unstable, the instability is of Hadamard type \cite{Hada1902,Ser2} and it is so violent that we practically never observe the shock evolve in time. In contrast, any small initial perturbation around a strongly stable shock (that is, a small wave impinging on the interface), compatible with the conservation laws and the jump conditions, produces a (local-in-time) solution to the nonlinear system with the same wave structure, that is, made of smooth regions separated by a (modified or curved) shock front. As shown by Majda \cite{M1}, the strong stability condition ensures the well-posedness of a non-standard constant coefficient initial boundary value problem. The intermediate case of a weakly stable shock for which there exist zeroes of the Lopatinski\u{\i} determinant on the imaginary axis ($\Delta(\ii \tau,\txi) = 0$, for frequencies $(\ii \tau, \xi) \in \partial \Gamma_{\hat{\nu}}^+$, $\tau \in \R$) refers to the existence of \emph{surface wave} solutions localized near the shock, having the form $\Phi(|x\cdot \hat{\nu}|) e^{\ii (\tau t + x\cdot \xi)}$ and with amplitude $\Phi$ decaying exponentially as we move away from the interface, $|x\cdot \hat{\nu}| \to \infty$.

\section{Compressible hyperelastic materials of Hadamard type}
\label{haddimate}

An hyperelastic material of \emph{Hadamard type} (cf. \cite{Hay68,Joh66}) is defined as an elastic material whose energy density function has the general form \eqref{Hadamardmat}, where $\mu > 0$ is a constant and $h : (0,\infty) \to \R$ is a function of class $C^3$. According to custom let us denote
\[
I^{(1)} =  \Tr (U^\top U),  \quad I^{(d)} = \det (U^\top U), \quad J = \sqrt{I^{(d)}} = \det U.
\]
$I^{(1)}$ and $I^{(d)}$ are well-known principal invariants of the right Cauchy-Green tensor, $C = U^\top U$, for any given deformation gradient $U \in \M^d_+$. Hence, energy densities for compressible Hadamard materials have the (Rivlin-Ericksen) form 
\begin{equation}
\label{HM-RErep}
W(U) = \overline{W}(I^{(1)}, J) = \frac{\mu}{2} I^{(1)} + h(J).
\end{equation}
The constant $\mu > 0$ is the classical shear modulus in the reference configuration, describing an object's tendency to deform its shape at constant volume when acted upon opposing forces. The energy density \eqref{HM-RErep} consists of two contributions: the first term is the \emph{isochoric} part of the energy, quantifying energy changes at constant volume and depending only on $I^{(1)}$, whereas the second one, the \emph{volumetric} function $h = h(J)$, quantifies energy changes due to changes in volume, and depends only on $J = \det U \in (0,\infty)$. In this paper, it is assumed that the function $h$ satisfies the regularity assumption \eqref{H1} ($h \in C^3$), the convexity condition for the energy \eqref{H2} ($h'' > 0$) and the material convexity condition \eqref{H3} ($h''' < 0$).

\begin{remark}
Hayes \cite{Hay68} calls \emph{restricted Hadamard materials} to those which, in addition to \eqref{H1} and \eqref{H2}, satisfy
\begin{equation}
\label{hdecreasing}
h'(J) \leq 0,  \quad \text{for all } \, J > 0,
\end{equation}
a condition which guarantees that the elastic medium fulfills the \emph{ordered forces inequality} of Coleman and Noll \cite{ColNol59}. Even though some of the examples of elastic materials presented in this paper satisfy inequality \eqref{hdecreasing}, the latter plays no role in the shock stability analysis.
\end{remark}

\subsection{Stress fields} We now derive the first Piola-Kirchhoff and Cauchy stress tensors from any energy density function of the form \eqref{Hadamardmat}.

\begin{lemma}
\label{lemfirstpk}
For a general compressible elastic model with energy density of the form $W = \overline{W}(I^{(1)}, J)$ in any dimension $d \geq 2$, the first Piola-Kirchhoff stress tensor is given by
\begin{equation}
\label{PiolKir}
\sigma(U) = 2 \frac{\partial \overline{W}}{\partial {I^{(1)}}} U + \frac{\partial \overline{W}}{\partial J} \big( \Cof U\big), \qquad U \in \M^d_+.
\end{equation}
Moreover, the Cauchy stress tensor is
\begin{equation}
\label{Cauchystress}
T(U) = \frac{2}{J} \frac{\partial \overline{W}}{\partial {I^{(1)}}} UU^\top + \frac{\partial \overline{W}}{\partial J} \Id, \qquad U \in \M^d_+.
\end{equation}
\end{lemma}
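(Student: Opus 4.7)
The plan is to compute the two stress tensors by direct application of the chain rule, since both invariants $I^{(1)}$ and $J$ are smooth functions of the entries $U_{ij}$ for $U \in \M^d_+$. Writing $W(U) = \overline{W}(I^{(1)}(U), J(U))$ and using the definition $\sigma_{ij} = \partial W / \partial U_{ij}$, the chain rule gives
\[
\sigma_{ij}(U) = \frac{\partial \overline{W}}{\partial I^{(1)}} \, \frac{\partial I^{(1)}}{\partial U_{ij}} + \frac{\partial \overline{W}}{\partial J} \, \frac{\partial J}{\partial U_{ij}},
\]
so the problem reduces to computing these two elementary partials and reassembling the result in tensorial form.

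For the first derivative, since $I^{(1)}(U) = \Tr(U^\top U) = \sum_{k,l} U_{kl}^2$, a direct computation yields $\partial I^{(1)} / \partial U_{ij} = 2 U_{ij}$. For the second, I would invoke relation \eqref{usefula} from Lemma \ref{lemmaux}, which gives $\partial J / \partial U_{ij} = (\Cof U)_{ij}$. Substituting both into the componentwise chain rule expression and recognizing the right-hand side as the $(i,j)$-entry of the matrix $2(\partial \overline{W}/\partial I^{(1)}) U + (\partial \overline{W}/\partial J)\Cof U$ immediately yields \eqref{PiolKir}.

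For the Cauchy stress tensor, I would invoke the standard continuum-mechanics relation between the first Piola-Kirchhoff and Cauchy stresses for an hyperelastic medium, $T(U) = J^{-1} \sigma(U) U^\top$ (valid for $U \in \M^d_+$; see, e.g., \cite{Ci,TrNo3ed}). Substituting the formula \eqref{PiolKir} just derived, one obtains
\[
T(U) = \frac{2}{J} \frac{\partial \overline{W}}{\partial I^{(1)}} \, U U^\top + \frac{1}{J} \frac{\partial \overline{W}}{\partial J} \, (\Cof U) U^\top.
\]
The identity \eqref{exprcof}, namely $(\Cof U) U^\top = J \Id$, collapses the second term to $(\partial \overline{W}/\partial J) \Id$, producing \eqref{Cauchystress}.

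The proof is essentially mechanical and presents no real obstacle; the only ingredient beyond the chain rule is the Jacobi-type identity $\partial J / \partial U_{ij} = (\Cof U)_{ij}$, which is already available from \eqref{usefula}. Consequently, I would present the argument as a short, self-contained computation rather than as a stepwise lemma proof.
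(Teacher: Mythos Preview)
Your proposal is correct and follows essentially the same route as the paper's proof: chain rule on $I^{(1)}$ and $J$, the identities $\partial I^{(1)}/\partial U_{ij}=2U_{ij}$ and \eqref{usefula}, and then the Piola--Cauchy relation $T=J^{-1}\sigma U^\top$ together with \eqref{exprcof}. There is nothing to add.
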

\begin{proof}
Follows from elementary computations: since $I^{(1)} = \sum_{h,k=1}^d U_{hk}^2$ then clearly $\partial_{U_{ij}} I^{(1)} = 2 U_{ij}$, $1 \leq i, j \leq d$; on the other hand, expression \eqref{usefula} above yields
\[
\frac{\partial W}{\partial U_{ij}} = 2 \frac{\partial \overline{W}}{\partial {I^{(1)}}} U_{ij} + \frac{\partial \overline{W}}{\partial J} (\Cof U)_{ij}, \qquad 1 \leq i,j \leq d.
\]
This shows \eqref{PiolKir}. Now, since the Cauchy stress tensor $T$ is related to $\sigma$ by $\sigma = J T U^{-\top}$ (cf. \cite{Ba4,Ci}), apply \eqref{exprcof} to obtain \eqref{Cauchystress}, as claimed.
\end{proof}

We have an immediate
\begin{corollary}
\label{corPKHad}
For compressible hyperelastic materials of Hadamard type, the first Piola-Kirchhoff stress tensor is given by
\begin{equation}
\label{HadPiolKir}
\sigma(U) = \mu U + h'(J) \, \Cof U , \qquad U \in \M^d_+.
\end{equation}
Furthermore, the Cauchy stress tensor is
\begin{equation}
\label{HadCauchystress}
T(U) = \frac{\mu}{J} UU^\top + h'(J) \Id, , \qquad U \in \M^d_+.
\end{equation}
\end{corollary}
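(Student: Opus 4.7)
The plan is to derive both stress tensor formulae as immediate consequences of Lemma \ref{lemfirstpk}, which has already established the general expressions \eqref{PiolKir} and \eqref{Cauchystress} for any energy density of the Rivlin--Ericksen form $W = \overline{W}(I^{(1)}, J)$. Since the Hadamard energy \eqref{Hadamardmat} fits exactly into this framework via the representation \eqref{HM-RErep}, only the partial derivatives of $\overline{W}$ with respect to $I^{(1)}$ and $J$ need to be identified.

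Concretely, writing $\overline{W}(I^{(1)}, J) = \tfrac{\mu}{2} I^{(1)} + h(J)$, I would compute
\[
\frac{\partial \overline{W}}{\partial I^{(1)}} = \frac{\mu}{2}, \qquad \frac{\partial \overline{W}}{\partial J} = h'(J),
\]
which hold on all of $\M_+^d$ under hypothesis \eqref{H1} (ensuring $h'$ exists and is continuous). Substituting these into \eqref{PiolKir} yields
\[
\sigma(U) = 2 \cdot \tfrac{\mu}{2} \, U + h'(J) \, \Cof U = \mu U + h'(J) \, \Cof U,
\]
which is \eqref{HadPiolKir}. Likewise, substituting into \eqref{Cauchystress} gives
\[
T(U) = \tfrac{2}{J} \cdot \tfrac{\mu}{2} \, U U^\top + h'(J) \, \Id = \tfrac{\mu}{J} \, U U^\top + h'(J) \, \Id,
\]
which is \eqref{HadCauchystress}.

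There is no real obstacle here: the corollary is a one-line substitution once Lemma \ref{lemfirstpk} is in hand. The only thing worth emphasizing in the write-up is that the isochoric and volumetric contributions to the energy decouple cleanly in the stress, with the shear modulus $\mu$ producing the linear-in-$U$ (respectively, $UU^\top/J$) term, while the pressure-like contribution $h'(J)$ multiplies the cofactor matrix (respectively, the identity). This decoupling is precisely what makes the subsequent computation of the acoustic tensor and the shock stability analysis tractable for the entire Hadamard class.
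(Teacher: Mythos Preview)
Your proposal is correct and follows exactly the approach of the paper, which records the proof in a single line: ``Follows directly from \eqref{HM-RErep} and Lemma \ref{lemfirstpk}.'' Your write-up simply makes explicit the substitution $\partial \overline{W}/\partial I^{(1)} = \mu/2$ and $\partial \overline{W}/\partial J = h'(J)$ that this line alludes to.
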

\begin{proof}
Follows directly from \eqref{HM-RErep} and Lemma \ref{lemfirstpk}.
\end{proof}

Given any deformation gradient $U \in \M^d_+$, the principal stretches $\vartheta_j > 0$, $j = 1, \ldots, d$, are the square roots of the eigenvalues of the symmetric right Cauchy-Green tensor. Therefore,
\[
I^{(1)} = \Tr (U^\top U) = \sum_{j=1}^d \vartheta_j^2, \quad J = \det U = \prod_{j=1}^d \vartheta_j.
\]
The following observation is a generalization of the result established by Currie \cite{Cur04} in dimension $d = 3$.
\begin{proposition}
For any $d \geq 2$ the possible range for $I^{(1)}$ is given by
\[
\mathcal{D} = \{ (I^{(1)}, J) \in \R \times (0, \infty) \, : \, I^{(1)} \geq d J^{2/d}\}.
\]
\end{proposition}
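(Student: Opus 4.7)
The plan is to prove the two inclusions separately. For the forward inclusion (necessity), the key observation is that since $\vartheta_1, \ldots, \vartheta_d > 0$, the arithmetic-geometric mean inequality applied to the squares $\vartheta_j^2$ yields
\[
\frac{I^{(1)}}{d} = \frac{1}{d} \sum_{j=1}^d \vartheta_j^2 \; \geq \; \left( \prod_{j=1}^d \vartheta_j^2 \right)^{1/d} = J^{2/d},
\]
so $I^{(1)} \geq d J^{2/d}$ for every admissible configuration. Equality holds precisely when $\vartheta_1 = \cdots = \vartheta_d$, i.e., $\vartheta_j = J^{1/d}$ for all $j$, which corresponds to a purely dilatational deformation.

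For the reverse inclusion (sufficiency), I would exhibit a one-parameter family of principal stretches realizing every target pair $(I^{(1)}, J)$ in $\mathcal{D}$. Fix $J > 0$ and, for $a > 0$, set $\vartheta_1 = \cdots = \vartheta_{d-1} = a$ and $\vartheta_d = J/a^{d-1}$, so that $\prod \vartheta_j = J$. Define
\[
g(a) := (d-1) a^2 + \frac{J^2}{a^{2(d-1)}}, \qquad a \in (0,\infty).
\]
A direct computation gives $g'(a) = 2(d-1) a^{-2d+1}(a^{2d} - J^2)$, so $g$ attains its unique minimum at $a_* = J^{1/d}$ with $g(a_*) = (d-1) J^{2/d} + J^{2/d} = d J^{2/d}$, while $g(a) \to +\infty$ as $a \to 0^+$ or $a \to +\infty$. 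By continuity and the intermediate value theorem, the image $g((0,\infty))$ is exactly the interval $[d J^{2/d}, +\infty)$. Hence, for any prescribed $I^{(1)} \geq d J^{2/d}$ one can pick $a > 0$ with $g(a) = I^{(1)}$, and the corresponding positive stretches realize the pair $(I^{(1)}, J)$.

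There is no substantive obstacle: the AM-GM step is immediate, and the realization step only requires the continuous surjectivity of $g$ onto $[d J^{2/d},\infty)$, which follows from elementary one-variable calculus. The only mild care is to pick a parametrization of the stretches (the chosen $(d{-}1,1)$ split) that keeps all $\vartheta_j$ strictly positive for every value of the parameter, which the proposed ansatz clearly does.
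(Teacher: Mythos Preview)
Your proof is correct and, for the necessity direction, identical to the paper's: both apply AM--GM to the squared principal stretches to obtain $I^{(1)} \geq d J^{2/d}$. The difference is that the paper stops there, whereas you also supply the sufficiency direction by exhibiting an explicit one-parameter family of stretches realizing every pair in $\mathcal{D}$. Your argument for that direction is clean and correct; it makes the statement a genuine characterization of the attainable range rather than merely a one-sided bound, which the paper's proof, strictly speaking, leaves unaddressed.
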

\begin{proof}
It is a straightforward application of the inequality of arithmetic and geometric means on the principal stretches,
\[
I^{(1)} = \Tr (U^\top U) = \vartheta_1^2 + \ldots + \vartheta_d^2 \geq d \, \Big( \vartheta_1^2 \cdots \vartheta_d^2 \Big)^{1/d} = d \, (\det U)^{2/d} = d J^{2/d}.
\]
\end{proof}

The boundary of the domain $\partial \mathcal{D} = \{ (I^{(1)},J) \, : \, I^{(1)} = d J^{2/d}\}$ is associated to pure pressure deformations, and the value $(I^{(1)},J) = (d,1) \in \partial \mathcal{D}$ corresponds to no deformations, $U = \Id$, with a reference configuration in which $\vartheta_j = 1$ for all $1 \leq j \leq d$.

\subsection{Compressible neo-Hookean materials}

The simplest interpretation of an elastic Hadamard material is as a compressible extension of a neo-Hookean incompressible solid. Incompressible hyperelasticity is restricted to isochoric (volume preserving) deformations with $J = \det U =1$, which is a kinematic constraint. The best known incompressible hyperelastic model is the neo-Hookean material \cite{Kubo48,Riv48-I,TrNo3ed}, whose energy function (in arbitrary space dimensions) is given by
\begin{equation}
\label{WnH}
W_{\mathrm{nH}}(U) = \overline{W}_{\mathrm{nH}}(I^{(1)}) = \frac{\mu}{2} (I^{(1)} - d).
\end{equation}

This strain-energy function provides a reliable and
mathematically simple constitutive model for the nonlinear deformation behavior
of isotropic hyperelastic materials, such as vulcanized rubber, similar to Hooke's law. It predicts typical effects known from nonlinear elasticity within the small strain domain (in contrast to linear elastic materials the stress-strain curve for a neo-Hookean material is not linear). It was first proposed by Rivlin in 1948 \cite{Riv48-I}.
Notably, the energy function \eqref{WnH} may also be derived from
statistical theory, in which rubber is regarded as a three-dimensional 
network of long-chain molecules that are connected at a few points (cf. \cite{BiArGr01,Holz00}). 

The incompressibility hypothesis works well for vulcanized rubber (under very high hydrostatic 
pressure the material undergoes very small volume changes). There are other materials, however, which are either slightly compressible, or which may undergo considerable volume changes (like foamed rubber). Therefore, compressible models are needed in order to describe these elastic responses. Furthermore, it is known that incompressibility can cause numerical difficulties in the analysis of finite elements, and in such cases nearly incompressible models are often used \cite{HorSa04,LeTal94}. As a result, either motivated by numerical or by physical considerations, compressibility is often accounted by the addition of a strain energy describing the purely volumetric elastic response. In the case of the neo-Hookean model, compressible extensions have the form
\[
W(U) = \overline{W}(I^{(1)},J) = \overline{W}_{\!\mathrm{nH}}(I^{(1)}) + \overline{W}_{\!\mathrm{vol}}(J).
\]
This decoupled representation of the energy as the sum of isochoric and volumetric energies is very common for isothermal deformations. A compressible extension should satisfy $\overline{W}(I^{(1)},1) = \overline{W}_{\!\mathrm{nH}}(I^{(1)})$, that is, $\overline{W}_{\!\mathrm{vol}}(1) = 0$. In the case of energies of the form \eqref{Hadamardmat} we clearly have an isochoric contribution given by the neo-Hookean energy density \eqref{WnH} and a volumetric response given by $\overline{W}_{\!\mathrm{vol}}(J) = h(J) +\tfrac{1}{2}\mu d$. Pence and Gou \cite{PnGo15} discuss nearly incompressible versions of the neo-Hookean model, as well as the requirements on the material moduli for the models to be compatible with the small-strain regime. In the next section we review such requirements and extrapolate them to arbitrary space dimensions.

\subsection{Compressible theory of infinitesimal strain}

Since undeformed configurations are stress free, one requires that $\sigma = 0$ whenever $U = \Id$. In the case of a Hadamard material, this requirement leads, upon substitution into formula \eqref{HadPiolKir}, to the following relation between the shear modulus and the function $h$,
\begin{equation}
\label{relhmu}
h'(1) = - \mu.
\end{equation}
This relation can be interpreted as a free stress condition for no deformations in the incompressible boundary, precisely at $(I^{(1)},J) = (d,1) \in \partial \mathcal{D}$. 

The mean pressure field is defined as (see, e.g., \cite{TruTo60}, p. 545),
\[
\overline{p} := - \frac{1}{d} \Tr (T(U)) = - \frac{1}{d} \Tr \Big( \frac{2}{J} \frac{\partial \overline{W}}{\partial I^{(1)}} UU^\top + \frac{\partial \overline{W}}{\partial J} \Id\Big) = - h'(J) - \frac{\mu}{d} \frac{I^{(1)}}{J}.
\] 
For symmetric deformation states, $U = J^{1/d} \Id$ (or equivalently, $(I^{(1)},J) \in \partial \mathcal{D}$), Pence and Gou \cite{PnGo15} define 
\[
- \,\hat{p}(J) := - \overline{p}(dJ^{2/d},J) = h'(J) + \mu J^{\frac{2}{d}-1} = -p_{\mathrm{hyd}}(J) + \mu J^{\frac{2}{d}-1},
\]
where
\begin{equation}
\label{derhydp}
p_{\mathrm{hyd}} (J) = - \frac{\partial \overline{W}_{\!\mathrm{vol}}}{\partial J} = - h'(J),
\end{equation}
is the \emph{hydrostatic pressure} (cf. \cite{Holz00,TrNo3ed}), or the pressure the material experiences when the shear strain is zero. The appropriate definition of the \emph{bulk modulus} of infinitesimal strain theory is therefore
\[
\kappa := - \left. \frac{d \hat{p}}{dJ} \right|_{J=1} = \hat{p}'(1),
\]
describing volumetric elasticity or how resistant to compression the elastic medium is. Consequently, for a Hadamard material with strain energy of the form \eqref{Hadamardmat} we have $\partial \overline{W} / \partial I^{(1)} = \tfrac{\mu}{2}$ and $\partial \overline{W} / \partial J = h'(J)$, yielding
\begin{equation*}
{- \hat{p}'(J) =}  \mu \Big( \frac{2}{d} -1\Big) J^{\frac{2}{d}-2} + h''(J), 
\end{equation*}
and the following relation between the bulk and shear moduli
\begin{equation}
\label{relkapmu}
\kappa = \mu \Big( \frac{2}{d} -1\Big)  + h''(1).
\end{equation}

Since the strain energy must be positive for small strains (linear physical theory for small deformations), on restriction to infinitesimal deformations the shear and bulk moduli must be positive to ensure compatibility with the linear response (cf. \cite{CMR94}). The Poisson ratio can then be defined in arbitrary dimensions as
\[
\overline{\nu} := \frac{d \kappa - 2 \mu}{2 \mu + d(d-1) \kappa},
\]
measuring the ratio of strain in the direction of load over the strain in orthogonal directions. This definition extends the well known formulae for the Poisson ratio in dimension $d=2$, $\overline{\nu} = \frac{\kappa - \mu}{\kappa + \mu}$, and in dimension $d = 3$, $\overline{\nu} = \frac{3\kappa - 2\mu}{2(3\kappa + 2\mu)}$
(see \cite{MeGa01,ThJa92}). Although the admissible thermodynamic range for the Poisson ratio is $-1 \leq \overline{\nu} \leq 1/2$ in dimension $d = 3$ \cite{PnGo15}, and $-1 \leq \overline{\nu} \leq 1$ in dimension $d = 2$ \cite{MeGa01}, the standard range for consideration is $\overline{\nu} > 0$ ($\overline{\nu}$ is usually positive for most materials\footnote{with the exception, of course, of \emph{auxetic} materials for which the Poisson ratio can be negative.} because interatomic bonds realign with deformation). To sum up, in this paper it is assumed that
\begin{equation}
\label{Poissonpos}
\mu > 0,  \quad \kappa > \frac{2}{d} \mu > 0.
\end{equation}
The classical Lam\'e moduli of an elastic material are the shear modulus $\mu > 0$ (second Lam\'e parameter) and $\Lambda$ (first Lam\'e parameter)\footnote{the first Lam\'e constant is usually denoted in the literature with the Greek letter $\lambda$; however, in order to avoid confusion with the frequency $\lambda \in \C$ in the shock stability analysis, we use a different symbol for it.}; the former can be related to the bulk and shear moduli by 
\[
\Lambda = \kappa - \frac{2\mu}{d};
\]
see \cite{Ci,TrNo3ed}. Notice that, under assumption \eqref{Poissonpos}, $\Lambda > 0$ .

\begin{remark}
\label{remmatconvex}
In view of \eqref{derhydp}, condition \eqref{H3} implies that $p_{\mathrm{hyd}}'' (J) = - h'''(J) > 0$ for all $J \in (0, \infty)$. Hence, hypothesis \eqref{H3} can be interpreted as a material convexity condition for zero shear strain.
\end{remark}

\subsection{Examples}  
\label{subsecexamples}

The following models belong to the class of compressible hyperelastic materials of Hadamard type, whose energy density functions have the form \eqref{Hadamardmat} and satisfy assumptions \eqref{H1} and \eqref{H2}. They have been proposed in the materials science literature to describe different elastic responses. It is worth mentioning that there exist compressible models with energies of the form \eqref{Hadamardmat} but which do not satisfy the convexity assumption \eqref{H2} for all deformations $J \in (0,\infty)$, such as the original Simo-Pister model \cite{SiPi84} (see also \cite{Hrtmn10}), or the Ogden $\beta$-log model \cite{Og72b} (see eq. (6.137), p. 244 in \cite{Holz00}).

\subsubsection*{$\mathrm{(a)}$ Ciarlet-Geymonat model} As a first example consider the following volumetric strain energy function
\begin{equation}
\label{hCG}
h_{\mathrm{CG}}(J) = - \frac{d}{2} \mu - \mu \log J + \left( \frac{\kappa}{2} - \frac{\mu}{d}\right) (J -1 )^2,
\end{equation}
where $\mu$ and $\kappa$ are the shear and bulk moduli, respectively, satisfying \eqref{Poissonpos}. Notice that $h_{\mathrm{CG}}(1) = -d\mu/2$ and therefore the energy density $\overline{W}_{\mathrm{CG}} = \frac{\mu}{2} I^{(1)} + h_{\mathrm{CG}}(J)$ is normalized as $\overline{W}_{\mathrm{CG}}(d,1) = 0$. It also satisfies \eqref{relhmu} and \eqref{relkapmu} as the reader may easily verify. Finally, in view of \eqref{Poissonpos} there holds the convexity condition \eqref{H2} as
\[
h_{\mathrm{CG}}''(J) = \frac{\mu}{J^2} +  \Big({\kappa} - \frac{2 \mu}{d}\Big) > 0, \qquad J \in (0, \infty).
\]
In addition, there holds
\[
h_{\mathrm{CG}}'''(J) = - \frac{2 \mu}{J^3} < 0,
\]
for all $J \in (0,\infty)$. This model is an extension to arbitrary spatial dimensions of the strain energy
\[
\overline{W} = \frac{\mu}{2} (I^{(1)} -3) +  \left( \frac{\kappa}{2} - \frac{\mu}{3}\right) (J -1 )^2 - \mu \log J,
\]
proposed by Ciarlet and Geymonat \cite{CiGe82} (see also \cite{Og84}) in dimension $d = 3$.  It is a special form of the family of compressible Mooney-Rivlin materials (see Ciarlet \cite{Ci}, section 4.10, p. 189, formula (iii) in the limit $b \to 0$). $h_{\mathrm{CG}}$ is defined for all deformations $J \in (0,\infty)$ and satisfies $h_{\mathrm{CG}} \to \infty$ as $J \to \infty$ and as $J \to 0^+$. 

\subsubsection*{$\mathrm{(b)}$ Blatz model} The energy function
\begin{equation}
\label{hB}
h_{\mathrm{B}}(J) = - \frac{d}{2} \mu + \Big(\kappa - \frac{2}{d} \mu \Big) \big( J-1\big)  - \Big( \kappa + \Big(\frac{d-2}{d}\Big) \mu \Big) \log J, 
\end{equation}
where, once again, $\mu$ and $\kappa$ are the shear and bulk moduli, respectively, generalizes to arbitrary dimensions $d \geq 2$ the modified compressible neo-Hookean form of the energy proposed by Blatz \cite{Bla71} (see eq. (48), p. 36), in dimension $d = 3$:
\[
\overline{W} = \frac{\mu}{2} (I^{(1)} - 3) + \Big(\kappa - \frac{2}{3} \mu \Big) \big( J-1\big)  - \Big( \kappa + \frac{\mu}{3} \Big) \log J.
\]
This function fulfills normalization, $h_{\mathrm{B}}(1) = -d\mu/2$, as well as conditions \eqref{relhmu} and \eqref{relkapmu}, as it is easily verified. Moreover, 
\[
h_{\mathrm{B}}''(J) = \frac{1}{J^2} \Big( \kappa + \frac{(d-2) \mu}{d} \Big) > 0, \quad h_{\mathrm{B}}'''(J) = -\frac{2}{J^3} \Big( \kappa + \frac{(d-2) \mu}{d} \Big) < 0,
\]
for all $J \in (0,\infty)$. Notice that $h_{\mathrm{B}} \to \infty$ as $J \to \infty$ or as $J \to 0^+$. This energy was selected by Blatz as a candidate strain energy density to describe thermostatic properties of homogeneous isotropic continuous \emph{elastomers} (elastic polymers).

\subsubsection*{$\mathrm{(c)}$ Neo-Hookean Ogden compressible foam material} The energy function
\begin{equation}
\label{hO}
h_{\mathrm{O}}(J) = - \frac{d}{2} \mu + \frac{\mu}{2 c_1} \big( J^{-2c_1} - 1\big),
\end{equation}
where
\[
c_1 = \frac{\overline{\nu}}{1- (d-1) \overline{\nu}} = \frac{d \kappa - 2 \mu}{2 d \mu} > 0,
\]
was proposed by Ogden \cite{Og72b} to model highly compressible rubber-like materials for which significantly volume changes can occur with relatively little stress (such as foams). It belongs to what is known in the literature as the family of Ogden compressible rubber foam materials (see \cite{MacDon2e}, p. 161):
\[
\overline{W} = \sum_{p=1}^N \frac{\mu_p}{\alpha_p} \Big( \sum_{j=1}^d \vartheta_j^{\alpha_p} - d \Big) \, + \, \sum_{p=1}^N \frac{\mu_p}{\alpha_p c_p} (J^{- \alpha_p c_p}-1),
\]
specialized here to $N =1$ (neo-Hookean), $\mu_1 = \mu > 0$, $\alpha_1 = 2$ and $c_1$ given above. This neo-Hookean element of the family has been used as a basis for residually stressed extensions for energies that account for elastic responses of blood arteries in medical applications (cf. \cite{GoWa10}). Notice that $h_{\mathrm{O}}(1) = -d\mu/2$ (normalization) and relations \eqref{relhmu} and \eqref{relkapmu} hold. Moreover, the convexity condition holds as
\[
h_{\mathrm{O}}''(J) = \frac{\mu( 2c_1 +1)}{J^{2(c_1+1)}} > 0, \quad h_{\mathrm{O}}'''(J) =  - \frac{2\mu(c_1+1)(2c_1 +1)}{J^{2c_1+3}} < 0,
\]
for all $J \in (0,\infty)$. Notably $h_{\mathrm{O}} \to \infty$ as $J \to 0^+$ but $\lim_{J \to \infty} h_{\mathrm{O}}(J)$ exists.

\subsubsection*{$\mathrm{(d)}$ Levinson-Burgess model} Consider the following volumetric function
\begin{equation}
\label{hLB}
h_{\mathrm{LB}}(J) = - \frac{d}{2} \mu + \frac{\mu}{2} \Big( \overline{c} (J^2 -1) + 2(\overline{c} + 1) (1 - J) \Big),
\end{equation}
where
\[
\overline{c} = \frac{\kappa}{\mu} - \frac{2}{d} + 1 > 0.
\]
This is a generalization to any space dimension $d \geq 2$ of the three dimensional material considered by Kirkinis \emph{et al.} \cite{KOH04},
\[
\overline{W} = \frac{\mu}{2} \left( I^{(1)} -3 + \left( \frac{\kappa}{\mu} + \frac{1}{3}\right) (J^2 -1) - 2 \left( \frac{\kappa}{\mu} + \frac{1}{3} + 1\right) (J-1) \right),
\]
which is, in turn, a special case of a compressible polynomial material introduced by Levinson and Burgess \cite{LeBu71} to account for weakly compressible elastic media with Poisson ratio close to $\tfrac{1}{2}$ (in dimension $d = 3$). Notice that $h_{\mathrm{LB}}(1) = -d\mu/2$ (normalization), it satisfies \eqref{relhmu} and \eqref{relkapmu}, and 
\[
h_{\mathrm{LB}}''(J) = \mu \overline{c} > 0, \quad h_{\mathrm{LB}}'''(J) \equiv 0,
\]
for all $J \in (0,\infty)$.
 \subsubsection*{$\mathrm{(e)}$ Simo-Taylor material} The Simo-Taylor model \cite{SiTa91} (see also \cite{Hrtmn10}),
 \begin{equation}
\label{hST}
h_{\mathrm{ST}}(J) = - \frac{d}{2} \mu - \mu \log J + \frac{\Lambda}{2} \Big( \frac{J^2}{2} - \log J - \frac{1}{2}\Big),
\end{equation}
where $\mu$ is the shear modulus and $\Lambda = \kappa - 2\mu/d > 0$ is the first Lam\'e parameter, clearly satisfies $h_{\mathrm{ST}}(1) = -d\mu/2$ (normalization) and conditions \eqref{relhmu} and \eqref{relkapmu}. Furthermore, the convexity condition \eqref{H2} holds, as
\[
h_{\mathrm{ST}}''(J) = \frac{\Lambda}{2} + \big( \mu + \frac{\Lambda}{2} \big) \frac{1}{J^2} > 0,
\]
for all $J \in (0,\infty)$. Observe also that
\[
h_{\mathrm{ST}}'''(J) = - (2 \mu + \Lambda) \frac{1}{J^3} < 0, \qquad J \in (0,+\infty).
\]
When $J \to 0^+$ or $J \to \infty$, $h_{\mathrm{ST}}$ grows unboundedly. This energy form can be derived from (Gaussian) statistical mechanics of long-chain molecules with entropic sources of compressibility modeled thorough the logarithmic terms (cf. Bischoff \emph{et al.} \cite{BiArGr01}).
 
\subsubsection*{$\mathrm{(f)}$ Special compressible Ogden-Hill material} The volumetric response function
\begin{equation}
\label{hOH}
h_{\mathrm{OH}} (J) = - \frac{d}{2} \mu + \frac{1}{b} \big( J -1)^2,
\end{equation} 
where $\mu > 0$ is the shear modulus and $b > 0$ is an empirical coefficient, yields an energy density $\overline{W}_{\mathrm{OH}} = \frac{\mu}{2} I^{(1)} + h_{\mathrm{OH}} (J)$ that also belongs to the class of compressible Hadamard materials. Notice that $W_{\mathrm{OH}}(d,1) = 0$ (normalization) but $h_{\mathrm{OH}}'(1) = 0$ and, thus, it does not satisfy the free stress condition \eqref{relhmu}. It does satisfy the convexity condition as 
\[
h_{\mathrm{OH}}''(J) = \frac{2}{b} > 0, \quad h_{\mathrm{OH}}'''(J) \equiv 0,
\]
for all $J \in (0,\infty)$. Also, $h_{\mathrm{OH}} \to \infty$ as $J \to \infty$, whereas $h_{\mathrm{OH}} (0^+)$ is well-defined. This model is a particular case of the well-known family of compressible Ogden-Hill materials \cite{Hill68a,Hill68b,Og72b}
\[
\overline{W} = \sum_{p=1}^N \frac{\mu_p}{\alpha_p} \Big( \sum_{j=1}^d \vartheta_j^{\alpha_p} - d \Big) \, + \, \sum_{p=1}^N \frac{1}{b_p^2} (J-1)^{2N},
\]
specialized to $N =1$, $\mu_1 = \mu > 0$, $\alpha_1 = 2$ and $b_1 = b > 0$. The family was proposed to model highly compressible materials such as low density polymer foams (cf. \cite{Mills07,ECLe18}). The parameter $b > 0$ is adjusted from experimental data. It is a modulus that measures compressibility: if $b$ is small then the material is highly compressible, whereas if $b$ is large then the material can be considered as nearly incompressible. It is used in the analysis of elastomers, as well as in the design of O-rings, seals and other industrial products \cite{MacDon2e}.

\subsubsection*{$\mathrm{(g)}$ Simo-Miehe model} The following energy function proposed by Simo and Miehe \cite{SiMi92} (see also \cite{Holz00}),
\begin{equation}
\label{hSM}
h_{\mathrm{SM}}(J) = - \frac{d}{2} \mu + \frac{\kappa}{4} \big( J^2 - 1 - 2 \log J \big),
\end{equation}
was introduced in the context of finite-strain viscoplasticity. Note that this volumetric energy attains a minimum at $J=1$, with $h_{\mathrm{SM}}'(1) = 0$, and therefore it does not satisfy the free stress condition \eqref{relhmu}.  It does, however, satisfy the convexity condition as 
\[
h_{\mathrm{SM}}''(J) = \frac{\kappa}{2} \Big(1 + \frac{1}{J^2}\Big) > 0,
\]
for all deformations. Moreover,
\[
h_{\mathrm{SM}}'''(J) = - \frac{\kappa}{J^3}< 0, \qquad J \in (0,\infty).
\]
Also, $h_{\mathrm{SM}}$ increases unboundedly as $J \to 0^+$ and as $J \to \infty$. 

\subsubsection*{$\mathrm{(h)}$ Bischoff, Arruda and Grosh model} Bischoff \emph{et al.} \cite{BiArGr01} proposed the following volumetric response function
\begin{equation}
\label{hBAG}
h_{\mathrm{BAG}}(J) = - \frac{d}{2} \mu + \frac{\overline{c}}{b^2} \big( \cosh(b(J-1)) - 1\big),
\end{equation}
where the constants $\overline{c}, b$ are positive empirical constants which should be calibrated from experimental data. Notice that $h_{\mathrm{BAG}}'(1) = 0$ and $J = 1$ is a minimum; thus, it does not satisfy \eqref{relhmu}. The convexity condition holds as,
\[
h_{\mathrm{BAG}}''(J) = \overline{c} \cosh( b(J-1)) > 0,
\]
for all $J \in (0,\infty)$. However,
\[
h_{\mathrm{BAG}}'''(J) = \overline{c} b \sinh( b(J-1)),
\]
yielding $h_{\mathrm{BAG}}'''(1) = 0$, as well as $h_{\mathrm{BAG}}'''(J) > 0$ if $J > 1$ and $h_{\mathrm{BAG}}'''(J) < 0$ if $J < 1$. Note also that $h_{\mathrm{BAG}} \to \infty$ as $J \to \infty$ but $h_{\mathrm{BAG}}(0^+)$ is well defined. This model was proposed to account for the contributions of entropy and initial energy to volume change. Its derivation follows non-Gaussian statistics of long chain molecules, which is necessary for large deformations. It can be interpreted as a non-Gaussian, higher order representation of the Ogden-Hill model \eqref{hOH} in the small volume changes regime, inasmuch as the series expansion around $J = 1$ yields
\[
h_{\mathrm{BAG}}(J) = - \frac{d}{2} \mu + \frac{\overline{c}}{2}(J-1)^2 + O((J-1)^4).
\]

\begin{remark}
The energy densities presented above are divided into two categories. Models (a) thru (e) can be interpreted as compressible versions of the neo-Hookean material in the sense described by Pence and Gou \cite{PnGo15}: they satisfy the free stress condition \eqref{relhmu} and the hydrostatic pressure condition \eqref{relkapmu}, both at the incompressible limit with no deformation, and represent materials which are nearly incompressible. In contrast, models (f) thru (h) are designed to fit experimental data involving phenomenological observations such as, for example, when foam polymers undergo large changes in volume \cite{HorSa04}. In these models, $h'(1) = 0$, so that the volumetric function $h$ provides a direct penalization of volume departing from $J =1$. All models (a) thru (h) provide neo-Hookean behavior in the incompressible limit, namely, $\overline{W}(I^{(1)},1) = \overline{W}_{\!\mathrm{nH}}(I^{(1)})$, and reduce to the standard linearly elastic material response when deformations are small (that is, when $| \tfrac{1}{2}(U^\top U - \Id) | \ll 1$).
\end{remark}

\begin{remark}
All the model examples presented here are physically motivated energy functions that satisfy assumptions \eqref{H1} and \eqref{H2} for all possible deformations and, therefore, they belong to the general class of compressible hyperelastic Hadamard materials. (It is to be observed that the family does not include other hyperelastic models found in the literature, such as the compressible versions of the Blatz-Ko, Murnaghan or Varga models, just to mention a few; see \cite{Holz00,Og84} and the references therein.) Notably, the convexity of the energy (property \eqref{H2}), implies that all energy functions are rank-one convex in the whole domain of $U$ with $\det U > 0$, making the elastodynamics equations hyperbolic in the whole domain of their state variables. The stability results of this paper apply to materials which, in addition, satisfy the material convexity condition \eqref{H3}.
\end{remark}

\def\cprime{$'\!\!$}

%\bibliographystyle{newstyle} 
%
%%\bibliographystyle{../refdb/siam} 
%
%\bibliography{bibliography}

\end{document}